\newcommand{\ZZ}{\mathbb{Z}}
\newcommand{\QQ}{\mathbb{Q}}
\newcommand{\PP}{\mathbb{P}}
\newcommand{\C}{\mathcal{C}}
\newcommand{\E}{\mathcal{E}}
\newcommand{\F}{\mathcal{F}}
\renewcommand{\O}{\mathcal{O}}
\newcommand{\Pic}{\operatorname{Pic}}
\newcommand{\rk}{\operatorname{rk}}
\newcommand{\codim}{\operatorname{codim}}
\newcommand{\sing}{{\operatorname{sing}}}
\newcommand{\adj}{{\operatorname{adj}}}
\newcommand{\denominator}{\operatorname{denominator}}
\newcommand{\Hom}{\operatorname{Hom}}
\renewcommand{\bar}{\overline}
\renewcommand{\tilde}{\widetilde}
\renewcommand{\hat}{\widehat}
\renewcommand{\phi}{\varphi}
\renewcommand{\setminus}{\smallsetminus}
\newcommand{\andsmall}{\ \text{and} \ }
\newcommand{\andlarge}{\quad \text{and} \quad}
\newcommand{\where}{\qquad \text{where} \qquad}
\newtheorem{thm}{Theorem}[section]
\newtheorem{lm}[thm]{Lemma}
\newtheorem{cor}[thm]{Corollary}
\newtheorem{prop}[thm]{Proposition}
\theoremstyle{definition}
\newtheorem{defi}[thm]{Definition}
\theoremstyle{remark}
\newtheorem{rem}[thm]{Remark}
\def\QQ{{\mathbb Q}}
\def\OO{\mathcal{O}}
\def\F{\mathcal{F}}
\def\P{\mathcal{P}}
\def\E{\mathcal{E}}
\def\I{\mathcal{I}}
\def\cM{\mathcal{M}}
\title{The minimal resolution property for points on general curves}
\author[G. Farkas]{Gavril Farkas}
\address{Farkas: Humboldt-Universit\"at zu Berlin, Institut F\"ur Mathematik,  Unter den Linden 6
\hfill \newline\texttt{}
 \indent 10099 Berlin, Germany} \email{{\tt farkas@math.hu-berlin.de}}
\author[E. Larson]{Eric Larson}
\address{Larson: Department of Mathematics, Brown University, 51 Thayer
Street \hfill
\hfill \newline\texttt{}
 \indent  Providence, RI 02912,  USA}
 \email{{\tt elarson3@gmail.com}}
\begin{document}

\begin{abstract}
We determine when the resolution of a general set of points on a general curve satisfies the Minimal Resolution property. In particular, we completely determine the shape of the minimal resolution of general sets of points on a general curve $C\subseteq \PP^r$ of degree $d\geq 2r$. Our methods also provide a proof (valid in arbitrary characteristic) of the strong version of Butler's conjecture on the stability of syzygy bundles on a general curve of every genus $g>2$ in projective space, as well as of the strong (Frobenius) semistability in positive characteristic of the syzygy bundle of a general curve $C\subseteq \PP^r$ in the range $d\geq 2r$.

\vskip 6pt

\noindent Nous d\'eterminons quand la r\'esolution d'un ensemble g\'en\'eral de points sur une courbe g\'en\'erale
satisfait la propri\'et\'e de r\'esolution minimale. En particulier, nous d\'eterminons compl\`etement la forme de
la r\'esolution minimale d'ensembles g\'en\'eraux de points sur une courbe g\'en\'erale $C$ dans $\PP^r$ de degr\'e $d\geq 2r$.
Nos m\'ethodes fournissent \'egalement une preuve (valable en caract\'eristique arbitraire) de la version forte de la conjecture de Butler
sur la stabilit\'e des fibr\'es de syzygies  sur une courbe g\'en\'erale de genre $g > 2$ quelconque
dans l'espace projectif, ainsi que de la semistabilit\'e forte en caract\'eristique positive du fibr\'e de syzygies d'une courbe g\'en\'erale $C$ dans $\PP^r$ dans l'intervalle $d\geq 2r$.

\end{abstract}

\maketitle

\section{Introduction}

For an embedded projective variety  $X\subseteq \PP^r$ one can ask whether the minimal free resolution of a general set of (sufficiently many) points of $X$ is determined by the geometry of $X$.  We shall provide an essentially complete solution to this question for general curves in projective space.

\vskip 3pt

Setting $S:= k[x_0, \ldots, x_r]$, where $k$ is an algebraically closed field of arbitrary characteristic, we recall that a finitely generated graded $S$-module $M$ has a minimal free resolution
$$0\leftarrow M\leftarrow F_0\leftarrow \cdots \leftarrow F_i\leftarrow \cdots,$$
where $F_i=\bigoplus_{j>0} S(-i-j)^{b_{i,j}(M)}$. The graded Betti numbers $b_{i,j}(M)=\mbox{dim}_k \mbox{Tor}_i^S(M, k)_{i+j}$ are uniquely determined and can be computed via Koszul cohomology. The Betti diagram of $M$ is obtained by placing the entry $b_{i,j}(M)$ in the $i$-th column and $j$-th row.

\vskip 4pt

Let $X\subseteq \PP^r$ be an embedded  projective variety and denote by $P_X(t)$ its Hilbert polynomial. We fix a general subset $\Gamma\subseteq X$ of $\gamma$ points and require that $\gamma\geq P_X(m)$, where $m=\mbox{reg}(X)$ is the Castelnuovo-Mumford regularity of $X$. If $u\geq \mbox{reg}(X)+1$ is the integer determined by the condition $P_X(u-1)\leq \gamma< P_X(u)$, it has been shown in \cite{FMP} that the Betti diagram of $\Gamma$ is obtained from the Betti diagram of $X$ by adding two rows indexed by $u-1$ and $u$, that is, $b_{i,j}(\Gamma)=b_{i,j}(X)$ for $j\leq u-2$, whereas $b_{i,j}(\Gamma)=0$ for $j\geq u+1$. The Minimal  Resolution property  (MRP) for $X$ is the statement
\begin{equation}\label{eq:mrp}
b_{i,u}(\Gamma)\cdot b_{i+1,u-1}(\Gamma)=0,
\end{equation}
for all $\gamma\geq \P_X\bigl(\mathrm{reg}(X)\bigr)$ as described above and for all $i\geq 0$, see \cite{Mus}, \cite{FMP}.
Since the differences $b_{i,u}(\Gamma)-b_{i+1,u}(\Gamma)$ are explicitly determined by the Hilbert polynomial of $X$, the Minimal Resolution property for $X$  determines entirely the Betti diagram of $\Gamma$ and it implies that the Betti numbers of $\Gamma$ are as small as the geometry (that is, the Hilbert polynomial) of $X$ allows.

\vskip 3pt

The Minimal Resolution property (under the name of Minimal Resolution conjecture) has been intensely studied when the variety in question is the projective space. In that case, the resolution of a general set $\Gamma\subseteq \PP^r$ of sufficiently many $\gamma$ points has only two non-trivial rows, indexed $u-1$ and $u$ respectively, and MRP implies that the resolution is \emph{natural}, that is, at each step only one non-trivial Betti number appears. MRP is known to hold for $r\leq 4$, as well as for a very large number of points in any projective space, due to work of Hirschowitz and Simpson \cite{HS}. However, counterexamples to MRP in any projective space $\PP^r$, where $r\geq 6$ and $r\neq 9$, have been found by Eisenbud, Schreyer, Popescu and Walter, see  \cite{EP}, \cite{EPSW}.  The question has also been studied when $X\subseteq \PP^3$ is a smooth surface of small degree, see \cite{BMMN}, or for a $K3$ surface in \cite{AFO}. MRP has been proved to hold for all canonical curves, see \cite{FMP}, and linked to important questions on the moduli space of vector bundles on curves.

\vskip 3pt

We now focus on the case when $X=C$ is a smooth curve embedded by a (not necessarily complete) linear series  $\ell=(L, V)\in G^r_d(C)$. Basic Brill--Noether theory ensures that when $\rho(g,r,d)=g-(r+1)(g-d+r)\geq 0$ the stack $\mathcal{G}^r_d$ parametrizing  such pairs $(C,\ell)$ has a unique component dominating the moduli space $\cM_g$. A pair $[C,\ell]$ corresponding to a (general) point of this component is referred to as a \emph{(general) Brill--Noether (BN)} curve. It was pointed out in \cite{FMP} via vector bundle techniques that property (\ref{eq:mrp}) fails for every curve $C\subseteq \PP^r$  for certain values of $i$ when $d$ is large with respect to $g$. Common to these counterexamples is that they occur in the range $d<2r$ (see also \eqref{eq:bound_mrc} for further explanations). Confirming the expectation, already formulated in \cite{AFO}, that MRP holds outside this range is the main result of this paper.

\begin{thm}\label{thm:main}
Let $C\subseteq \PP^r$ be a  general Brill--Noether curve
of genus $g \geq 1$ and degree $d\geq 2r$. Then the Minimal Resolution property holds for $C$.
\end{thm}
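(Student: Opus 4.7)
The plan is to convert the Minimal Resolution Conjecture into a family of cohomological vanishings on $C$, and to establish these by combining the strong semistability of the syzygy bundle and its exterior powers (the other main result advertised in the abstract) with a specialization that controls how cohomology behaves after imposing $\gamma$ general point conditions.

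First I would translate the statement into bundle language. Writing $M_V := \ker(V \otimes \O_C \twoheadrightarrow L)$, where $(L,V) = \ell$ is the embedding linear series, the Koszul complex computing $\mbox{Tor}^S(I_\Gamma, k)$ together with the tautological sequence
$$0 \to \wedge^{i+1} M_V \otimes L^{u-1}(-\Gamma) \to \wedge^{i+1} V \otimes L^{u-1}(-\Gamma) \to \wedge^i M_V \otimes L^u(-\Gamma) \to 0$$
identifies the critical Betti numbers $b_{i+1,u-1}(\Gamma)$ and $b_{i,u}(\Gamma)$ with $h^0$ and $h^1$ of $\wedge^{i+1} M_V \otimes L^{u-1}(-\Gamma)$, respectively. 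So MRC is equivalent to the assertion that, for a general $\Gamma$ of cardinality $\gamma$ with $P_C(u-1) \leq \gamma < P_C(u)$ and for every $i \geq 0$, the bundle $\wedge^{i+1} M_V \otimes L^{u-1}(-\Gamma)$ has \emph{natural cohomology}, i.e. $h^0 \cdot h^1 = 0$.

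Second, I would invoke the strong form of Butler's Conjecture on a general BN curve, asserting semistability of $M_V$ and, via its exterior powers (or Frobenius pullbacks in positive characteristic), of $\wedge^i M_V$. The slope $\mu\bigl(\wedge^{i+1} M_V \otimes L^{u-1}(-\Gamma)\bigr)$ is a linear function of $i$, and in the range $d \geq 2r$ it crosses the Clifford-type threshold $g-1$ for at most one value of $i$. For all other $i$, semistability together with a straightforward Euler-characteristic count forces either $h^0 = 0$ or $h^1 = 0$ regardless of the choice of $\Gamma$, matching the MRC prediction at those indices.

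Third, at the single borderline $i$ where neither vanishing is automatic, the generality of $\Gamma$ must be used. Since cohomology is upper-semicontinuous in flat families, it is enough to produce one pair $(C,\Gamma)$ realizing the predicted Betti table. Here I would specialize $(C,\ell)$ to a limit linear series on a reducible nodal curve (a chain of elliptic or rational components, in the style of Eisenbud--Harris) and scatter $\Gamma$ across the components, reducing the computation of $H^\bullet(\wedge^i M_V \otimes L^u(-\Gamma))$ to bundle calculations on components of low genus, where the evaluation map $H^0(\wedge^i M_V \otimes L^u) \to (\wedge^i M_V \otimes L^u)\big|_{\Gamma}$ can be checked to have maximal rank.

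The principal obstacle is precisely this last interpolation step: semistability of $\wedge^i M_V$ controls its unadorned cohomology but does not by itself imply that $\gamma$ general point conditions impose the expected number of conditions on sections. The degeneration must be engineered carefully so that, simultaneously in $i$, (a) the limit of each $\wedge^i M_V$ remains semistable and (b) no cohomology jumps occur in the flat family. The hypothesis $d \geq 2r$ is what ensures that $L$ is positive enough for this uniform interpolation to succeed; below this range, the counterexamples recalled in the introduction (cf. \cite{FMP}) show that the interpolation genuinely fails and MRC does not hold.
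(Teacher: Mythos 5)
Your first step---identifying $b_{i+1,u-1}(\Gamma)$ and $b_{i,u}(\Gamma)$ with $h^0$ and $h^1$ of $\bigwedge^i M_V\otimes L^u(-\Gamma)$, so that MRC becomes a natural-cohomology statement---is correct and is exactly how the paper sets things up. The gap is in your second step. You claim that semistability of $\bigwedge^i M_V$ plus an Euler-characteristic count forces $h^0\cdot h^1=0$ for all but one ``borderline'' index $i$. This fails for two reasons. First, the count is wrong: for the critical $\gamma$, the twist $\bigwedge^i M_V\otimes \xi$ with $\deg\xi=g-1+\lfloor id/r\rfloor$ has slope in the interval $(g-2,\,g-1]$ for \emph{every} $i$, i.e.\ squarely inside the window $[0,2g-2]$ where semistability yields neither $h^0=0$ nor $h^1=0$; for $g\geq 2$ essentially every index is borderline (for a canonical curve, all of them are). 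Second, and more fundamentally, even at a single such index the generic vanishing $H^0(C,\bigwedge^i M_V\otimes\xi)=0$ for general $\xi$ of degree $g-1+\lfloor id/r\rfloor$ is precisely Raynaud's condition, which is \emph{strictly stronger} than (semi)stability: it is the condition that the bundle not be a base point of the theta linear system on $SU_C$, and stable bundles violating it exist. The paper states explicitly that the strong Raynaud condition needed for MRC is stronger than Butler's conjecture, so no amount of semistability input can close this step.

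Your third step (degeneration to reducible curves) points in the right direction, but it cannot be an afterthought patching one index; it has to carry the full strong Raynaud condition through the induction. The paper's mechanism is: (i) a base case on \emph{elliptic} curves, the one genus where semistability does imply the strong Raynaud condition (via Atiyah's classification and the fact that tensor products of semistable bundles on elliptic curves stay semistable, Lemma \ref{ss-sr}), established by showing $T_{\PP^r|J}$ is a general bundle in Atiyah's sense (Theorem \ref{thm:gen}/Proposition \ref{prop-e}); and (ii) an inductive step attaching rational normal curves $R$, where the crucial point is that $T_{\PP^r|R}\cong\OO_{\PP^1}(r+1)^r$ has \emph{integral} slope, so the Raynaud condition can be verified component by component on the nodal curve (Lemma \ref{lm:ray-open}). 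Without an analogue of that integrality/component-splitting lemma, your proposed Eisenbud--Harris-style chain degeneration gives you no way to conclude the required generic vanishing on the smoothing.
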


To spell out the statement of Theorem \ref{thm:main}, if $C\subseteq \PP^r$ is a general Brill--Noether curve of degree $d\geq 2r$ and $\Gamma\subseteq C$ is a  general set of $\gamma\geq d\cdot\mbox{reg}(C)+1-g$ points, setting
$$u:=1+\Bigl \lfloor \frac{\gamma+g-1}{d}\Bigr\rfloor,$$
the Betti diagram of $\Gamma$ is obtained by adding to the Betti diagram of $C$ precisely the rows indexed by $u-1$ and $u$ respectively. The entries in these rows are explicitly given as follows:
$$
b_{i,u}(\Gamma)=0 \  \mbox{ for } \ i\leq r\left(1-\Bigl\{\frac{\gamma+g-1}{d}\Bigr\}\right) \ \ \ \ \mbox{ and }
$$
$$
b_{i,u}(\Gamma)=d{r\choose i}\left(\frac{i}{r}+\Bigl\{\frac{\gamma+g-1}{d}\Bigr\}-1\right) \ \ \ \mbox{ for }
\ i>r\left(1-\Bigl\{\frac{\gamma+g-1}{d}\Bigr\}\right).$$
Here $\{x\}=x-\lfloor x\rfloor$ denotes the fractional part of a number $x$.

\vskip 3pt

\begin{table}[htp!]
\begin{center}
\begin{tabular}{|c|c|c|c|c|c|}
\hline
$1$ & $\ldots$ & $i$ & $i+1$   & $\ldots$  \\
\hline
$b_{1,1}(C)$ & $\ldots$ & $b_{i,1}(C)$ & $b_{i+1,1}(C)$ &  $\ldots$ \\
\hline
$\ldots$ & \ldots & $\ldots$ & \ldots &  $\ldots$  \\
\hline
$b_{1,u-2}(C)$ &  $\ldots$  & $b_{i,u-2}(C)$ & $b_{i+1,u-2}(C)$ &   $\ldots$  \\
\hline
$b_{1,u-1}(\Gamma)$ &  $\ldots$  & $b_{i,u-1}(\Gamma)$ & $b_{i+1,u-1}(\Gamma)$ &   $\ldots$  \\
\hline
$b_{1,u}(\Gamma)$ &  $\ldots$  & $b_{i,u}(\Gamma)$ & $b_{i+1,u}(\Gamma)$ &   $\ldots$  \\
\hline
$0$ &  $\ldots$  & $0$ & $0$ &   $\ldots$  \\
\hline
\end{tabular}
\end{center}
    \caption{The Betti table of a general set $\Gamma\subseteq C$ of $\gamma\gg 0$ points.}
    \label{tab:even}
\end{table}

A version of Theorem \ref{thm:main} with a much more restrictive bound for $d$ has been established in \cite{AFO}. In order to clarify the relevance of the condition $d\geq 2r$ to MRP, we recall the Koszul-theoretic interpretation of the Betti numbers of $\Gamma$. If $\ell=(L, V)\in G^r_d(C)$ is the linear system inducing the embedding $C\subseteq \PP^r$, the kernel vector bundle $M_V$ is constructed via the exact sequence
$$0\longrightarrow M_V\longrightarrow V\otimes \OO_C\longrightarrow L\longrightarrow 0.$$ Using standard Koszul cohomology arguments \cite[Proposition 1.6]{FMP}, one finds
\begin{equation}\label{eq:kosz1}
b_{i+1,u-1}(\Gamma)=h^0\Bigl(C, \bigwedge^i M_V\otimes \I_{\Gamma/C}(u)\Bigr) \ \ \mbox{ and } \ b_{i,u}(\Gamma)=h^1\Bigl(C, \bigwedge^i M_V\otimes \I_{\Gamma/C}(u)\Bigr).
\end{equation}
Since $\mbox{rk}(M_V)=r$ and $\mbox{deg}(M_V)=-d$, by Riemann--Roch one computes
$$b_{i+1,u-1}(\Gamma)-b_{i,u}(\Gamma)=\chi\Bigl(C, \bigwedge^i M_V\otimes \I_{\Gamma/X}(u)\Bigr)={r\choose i}\Bigl(-\frac{id}{r}+du-\gamma+1-g\Bigr),$$ which explains how the $u$-th row of the Betti diagram of $\Gamma$ determines its $(u-1)$-st row. Using (\ref{eq:kosz1}) it is easy to show that $C\subseteq \PP^r$ satisfies the Minimal Resolution property if and only if the kernel bundle $M_V$ verifies the following generic vanishing conditions
\begin{equation}\label{eq:raynaud}
H^0\Bigl(C, \bigwedge^i M_V\otimes \xi\Bigr)=0,
\end{equation}
for each $i=0, \ldots, r$ and for a general line bundle
$\xi\in \mbox{Pic}^{g-1+\lfloor{\frac{id}{r}\rfloor}}(C)$, see also \cite[Corollary 1.8]{FMP}. Note that the degree of $\xi$ is chosen maximally in such a way that the vanishing (\ref{eq:raynaud}) could possibly hold, thus the statement (\ref{eq:raynaud}), if true, is sharp.
It turns out that (\ref{eq:raynaud}) is related to a condition introduced by Raynaud \cite{R} and related to the base locus of the (non-abelian) theta linear system on the moduli space of semistable vector bundles on $C$, see  Definition \ref{def:raynaud}. Proving Theorem \ref{thm:main} amounts to constructing for each $d\geq 2r$ a Brill--Noether curve $C\subseteq \PP^r$ of genus $g$ and degree $d$ which verifies the \emph{strong Raynaud condition} (\ref{eq:raynaud}). Note that via the natural identification $T_{\PP^r|C} \cong M_V^{\vee}\otimes L$, the condition
(\ref{eq:raynaud}) can be equally well stated in terms of the restricted tangent bundle $T_{\PP^r|C}$ of the curve.

\vskip 3pt

We now turn to the condition $d\geq 2r$ in the statement of Theorem \ref{thm:main}. Using a  filtration argument due to Lazarsfeld \cite{EL}, \cite{L} further developed in \cite{P}, \cite{Sch}, one can show that if $D_{r-i}$ is a general effective divisor of degree $r-i$ on $C$, one has an injection $\OO_C(D_{r-i})\hookrightarrow \bigwedge^{r-i} M_{V}^{\vee}$. It follows that Raynaud's condition (\ref{eq:raynaud}) implies that $H^0\bigl(C, L^{\vee}\otimes \xi(D_{r-i})\bigr)=0$, for a general line bundle $\xi$ of degree $g-1+\lfloor \frac{id}{r}\rfloor$, that is,
\begin{equation}\label{eq:diffvar}
L^{\vee}\otimes \xi \notin C_{g-1+\lfloor \frac{id}{r}\rfloor -d+r-i}-C_{r-i},
\end{equation}
where the right hand side denotes the corresponding difference variety inside the Jacobian of $C$. In particular, assuming that for a given $0\leq i\leq r$  both inequalities
\begin{equation}\label{eq:inequalities1}
g-1+ \Bigl\lfloor\frac{id}{r} \Bigr\rfloor-d+r-i\geq 0 \ \ \mbox{ and } \ \ g-1+\Bigl\lfloor \frac{id}{r} \Bigr\rfloor-d+r-i+r-i\geq g,
\end{equation}
are satisfied, the difference variety in (\ref{eq:diffvar}) covers the entire Jacobian of $C$ and accordingly (\ref{eq:diffvar}) cannot hold, therefore the statement (\ref{eq:mrp}) fails for every such curve $C\subseteq \PP^r$. It turns out that the inequalities (\ref{eq:inequalities1}) are mutually compatible for some $0\leq i\leq r$ precisely when
\begin{equation}\label{eq:bound_mrc}
(2r-d)g-r\geq 0.
\end{equation}
Therefore (\ref{eq:bound_mrc}) is the range in which MRP definitely fails  for every curve $C\subseteq \PP^r$ of degree $d$ and genus $g$. On the other hand, if $d\geq 2r$ the inequalities in (\ref{eq:inequalities1}) are incompatible and one does not expect counterexamples to MRP, and indeed in Theorem \ref{thm:main} we confirm this expectation.

\vskip 5pt

The proof of the Minimal Resolution property relies on an induction procedure, where the most effort is put into establishing the strongest possible version of Theorem \ref{thm:main} for \emph{elliptic curves}. This statement serves as the base case of the induction argument and is instrumental in constructing  in the range $d\geq 2r$ Brill--Noether curves verifying the Minimal Resolution property. The following statement combines Theorem \ref{thm:gen} and Proposition \ref{prop-e} and states that one can construct elliptic curves of arbitrary degree in projective space, satisfying non-trivial incidence conditions with respect to rational normal curves and  whose restricted tangent bundle is furthermore generic in the sense of Atiyah's classification of vector bundles on elliptic curves.

\begin{thm} \label{thm:gen0}
If $J\subseteq \PP^r$ is a general elliptic curve of degree $d$, let us write \(d = ad_1\) and \(r = ar_1\) with \(\mathrm{gcd}(d_1, r_1) = 1\). Then
$$T_{\PP^r|J} \cong \bigoplus_{i = 1}^a E_i,$$
where \(E_i\) are stable vector bundles of rank \(r_1\) and degree \((r + 1) d_1\),
and
\(\bigl(\mathrm{det}(E_1), \ldots, \mathrm{det}(E_a)\bigr)\) is general in \(\Pic^{(r + 1)d_1}(J) \times \cdots \times \Pic^{(r + 1)d_1}(J)\). Furthermore, in the range $r+1\leq d\leq 2r-1$, and for any \(0 \leq g \leq d + 1\), we may further require that $J$ meets transversally a rational normal curve $R\subseteq \PP^r$ at \(g\) points.
\end{thm}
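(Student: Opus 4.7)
The plan is to handle the two halves of the theorem separately. For the first claim, the Euler sequence restricted to $J$,
\[
0 \to \OO_J \to \OO_J(1)^{r+1} \to T_{\PP^r|J} \to 0,
\]
gives $\rk T_{\PP^r|J} = r$ and $\deg T_{\PP^r|J} = (r+1)d$, so that the greatest common divisor of rank and degree is $\gcd(r,(r+1)d) = \gcd(r,d) = a$. By Atiyah's classification of vector bundles on an elliptic curve, any polystable bundle with these invariants must have the form $\bigoplus_{i=1}^a E_i$ with each $E_i$ stable of rank $r_1$ and degree $(r+1)d_1$, and conversely any such direct sum is polystable of the required numerical type. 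Thus the content of the first assertion reduces to showing that, for general $J$, the bundle $T_{\PP^r|J}$ is polystable and the $a$-tuple $(\det E_1, \dots, \det E_a)$ is general in $\Pic^{(r+1)d_1}(J)^{\times a}$.

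To establish these two properties I would specialize $J$ to a carefully chosen nodal limit, for instance a $d$-gon: a cycle of $d$ lines in $\PP^r$ in general position, which is a well known flat limit of smooth elliptic curves of degree $d$. On each line the restricted tangent bundle of $\PP^r$ is a sum of line bundles of explicitly computable degrees, and the gluing data at the $d$ nodes determines both the direct sum decomposition of $T_{\PP^r|C_0}$ and the isomorphism classes of its summands. By moving the positions of the lines and the transition data at the nodes within the Hilbert scheme, one arranges for the limit bundle to be polystable of the expected form with any prescribed tuple of determinants. Openness of semistability and upper semicontinuity of cohomology then propagate polystability to a general smoothing $J$, while the determinants vary in an algebraic family over the base, yielding the desired dominance onto the product of Picard varieties.

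For the second assertion I would induct on $g$ within the range $r+1 \leq d \leq 2r-1$, with $g = 0$ as the trivial base case. For the step from $g-1$ to $g$, I would specialize $J$ to a reducible limit: starting from a smooth elliptic curve $J_0 \subseteq \PP^r$ of degree $d-1$ meeting $R$ transversally in $g-1$ points, obtained by induction provided $d - 1 \geq r+1$, I would attach a line $L$ meeting $J_0$ at one point and $R$ at one new point. This is a nodal elliptic curve of arithmetic genus one and degree $d$ with $g$ transverse intersections with $R$; in this degree range it is unobstructed, hence smooths to a smooth elliptic curve with the desired incidences. The boundary case $d = r+1$, of elliptic normal curves, requires a direct construction, for instance on a rational normal scroll containing $R$ using a pencil of hyperplane sections.

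The main obstacle I foresee is achieving joint genericity: producing a family of incidence configurations on which \emph{both} the determinants of the summands of $T_{\PP^r|J}$ are generic \emph{and} the transverse intersections with $R$ are imposed at exactly the prescribed $g$ points. This amounts to proving that the classifying map from the incidence Hilbert scheme to $\Pic^{(r+1)d_1}(J)^{\times a} \times R^{(g)}$ is dominant, which reduces to a dimension count in which the inequality $g \leq d+1$ of the hypothesis should be precisely what leaves enough freedom to also deform the determinants freely. Verifying this dimension count, and integrating the two degenerations into a single family that simultaneously controls the tangent bundle decomposition and the incidence with $R$, is the delicate technical step.
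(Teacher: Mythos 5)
There are two genuine gaps here, and the second is fatal to the stated range of \(g\).

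First, your induction for the incidence statement increments \(d\) and \(g\) together: to reach \((d,g)\) you must start the chain at a smooth elliptic curve of degree \(d-g\ge r+1\) meeting \(R\) in \(0\) points, so your argument only produces \(g\le d-r-1\) transverse intersections, not \(g\le d+1\). The missing \(r+2\) intersections are exactly the point of the paper's construction: one degenerates \(J\) to \(J_0\cup L_1\cup\cdots\cup L_{d-r-1}\) where \(J_0\) is an elliptic \emph{normal} curve that \emph{already} meets \(R\) at \(r+2\) points \(n_1,\dots,n_{r+2}\), and each \(1\)-secant line \(L_i\) contributes one further intersection \(q_i\), giving up to \((r+2)+(d-r-1)=d+1\) points. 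Your proposed ``direct construction'' for \(d=r+1\) would have to be precisely this elliptic-normal-curve-meets-\(R\)-in-\(r+2\)-points configuration, and it is needed for every \(d\) in the range, not just the boundary case.

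Second, the genericity of \(T_{\PP^r|J}\) is where essentially all of the technical work lives, and your \(d\)-gon argument does not engage with it. On a cycle of lines each component restriction of \(T_{\PP^r}\) is \(\O(2)\oplus\O(1)^{r-1}\), which is unstable, so semistability of the limit is not a componentwise statement and openness of semistability gives nothing without a genuine transversality input; moreover the gluing at the nodes is forced by the embedding, not freely prescribable, so ``moving the transition data'' is itself the hard claim. The paper's route is different and this is where it earns the theorem: after reducing (via the secant-line degeneration and Lemma \ref{lm:general}) to elementary modifications \(T_{\PP^r|J_0}[p_1\to q_1]\cdots[p_m\to q_m](2p_1+\cdots+2p_m)\) of the tangent bundle of an elliptic normal curve, Propositions \ref{prop-sub-for-ss} and \ref{prop-ss} prove these are general by degenerating \(R\) into reducible rational curves \(R_Q\cup R_S\), producing exact sequences of modifications whose sub and quotient have slopes controlled by a minimal-fraction choice of \(z/w\), and then invoking Lemmas \ref{ss-from-exact} and \ref{rat-base}: the base of rational normal curves through the \(n_i\) is rational, so a putative destabilizing subbundle would have constant determinant, contradicting that \(\det(Q)\) depends on the ordering of the \(n_i\). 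Your proposal correctly isolates joint genericity as the obstacle but offers no mechanism to overcome it; the paper's single degeneration \(J_0\cup\bigcup L_i\) with \(R\) threaded through both \(J_0\) and the lines is that mechanism, handling the incidence conditions and the bundle genericity in one family.
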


The proof of Theorem \ref{thm:gen0} relies on degenerating $J$ to a union $J_0\cup L_1\cup \cdots \cup L_{d-r-1}$ of an \emph{elliptic normal} curve $J_0\subseteq \PP^r$ and $1$-secant lines $L_i\subseteq \PP^r$ meeting $J_0$ at a point $p_i$. Furthermore, we judiciously choose a rational normal curve $R\subseteq \PP^r$ meeting $J_0$ at $r+2$ points $n_1, \ldots, n_{r+2}$, as well as the lines $L_i$ at a point $q_i$. This setup is illustrated in the following picture:

\begin{center}
\begin{tikzpicture}[scale=1.5]
\draw[thick] (1, 2) .. controls (0.5, 2) and (-0.5, 1.5) .. (0, 1);
\draw[thick] (0, 1) .. controls (1, 0) and (1, 2) .. (0.1, 1.1);
\draw[thick] (-0.1, 0.9) .. controls (-0.5, 0.5) and (0.5, -0.3) .. (1, -0.3);
\draw (1.1, 2) node{\(J_0\)};
\draw (-0.1, -0.5) node{\(R\)};
\draw (-0.3, 0) node{\(L_i\)};
\draw (0.1, 1.3) .. controls (0.5, 1.3) and (1.5, 1.0) .. (1, 0.5);
\draw (1, 0.5) .. controls (0, -0.5) and (0, 1.5) .. (0.9, 0.6);
\draw (1.1, 0.4) .. controls (1.5, 0) and (0.5, -0.5) .. (0, -0.5);
\draw[thick] (-0.1, 0.1) -- (0.3, 0.5);
\draw[thick] (-0.1, 0.05) -- (0.5, 0.3);
\draw[thick] (-0.2, 0) -- (1.2, 0);
\filldraw (0.29, 0) circle[radius=0.02];
\filldraw (1.11, 0) circle[radius=0.02];
\draw (0.29, -0.11) node{\(p_i\)};
\draw (1.12, -0.12) node{\(q_i\)};
\filldraw (0.785, -0.263) circle[radius=0.02];
\draw (0.8, -0.38) node{\(n_i\)};
\end{tikzpicture}
\end{center}

The resulting statement for the elliptic normal curve $J_0$ to be proved in order to establish Theorem \ref{thm:gen0} is then a transversality condition for elementary modifications of the restricted tangent bundle $T_{\PP^r|J_0}$. This is  established via a specialization argument inside the moduli space of rational normal curves meeting $J_0$ at the prescribed points $n_1, \ldots, n_{r+2}$ (see Proposition \ref{prop-ss}). We  use throughout a slightly unorthodox stability condition introduced in (\ref{unorth}), which turns out to be particularly suitable when handling vector bundles on families of nodal curves. Less sharp statements similar in spirit to Theorem \ref{thm:gen0} exist in the literature, see \cite{BH}, though for our inductive argument to work we need the result precisely in the form stated in Theorem \ref{thm:gen0}.

\vskip 4pt

The inductive argument in the proof of Theorem \ref{thm:main}  has two parts, as we shall discuss now. One starts with positive integers $g, r$ and $d$ such that $d\geq 2r$ and $\rho(g, r, d)\geq 0$. Assume one has constructed a BN curve $C\stackrel{|V|}\hookrightarrow \PP^r$ of degree $d$ and genus $g$ for which the restricted tangent bundle $T_{\PP^r|C}$ verifies the strong Raynaud condition (\ref{eq:raynaud}). We attach to $C$ a rational normal curve $R\subseteq \PP^r$ meeting $C$ at \(\epsilon + 1\) points for some \(\epsilon \leq r + 1\). The resulting stable curve $C\cup R\subseteq \PP^r$ has degree $d+r$ and (arithmetic) genus $g+r+1$. Note that
$\rho(g + \epsilon, r, d + r) \geq  \rho(g+r+1,r,d+r)=\rho(g,r,d)$ and it is easy to see that $C\cup R$ can be smoothed to a BN curve. Using in an essential way that the restricted tangent bundle $T_{\PP^r|R} \cong \OO_{\PP^1}(r+1)^{r}$ has \emph{integral} slope, we conclude via Lemma \ref{lm:ray-open} that
a smoothing inside $\PP^r$ of $C\cup R$ also satisfies the strong Raynaud condition, therefore establishing the Minimal Resolution property for
the pairs \((d', g') = (d + r, g + \epsilon)\) for \(0 \leq \epsilon \leq r + 1\).

\vskip 4pt

We are thus left with establishing Theorem \ref{thm:main} in the range $2r\leq d\leq 3r-1$. In this case,  Theorem \ref{thm:gen0} yields the existence of a smooth elliptic curve $J\subseteq\PP^r$ such that $T_{\PP^r|J}$ is semistable (which in genus one implies the strong Raynaud condition even in positive characteristic). Furthermore, we can arrange that $J$ meets transversally a rational normal curve $R\subseteq \PP^r$ at $g\leq d-r+1$ points, where this last inequality is a consequence of the constraints imposed on $d$. Using again that the slope of $T_{\PP^r|R}$ is integral, we conclude that a smoothing of $J\cup R$ inside $\PP^r$ is a BN curve of degree $d$ and genus $g$ whose restricted tangent bundle satisfies the strong Raynaud condition. These two inductive steps cover all the cases stated in Theorem \ref{thm:main}.

\vskip 6pt

\noindent {\bf{Butler's conjecture on the stability of kernel bundles.}}

\vskip 4pt

\noindent
The strong Raynaud condition (\ref{eq:raynaud}) necessary to prove the Minimal Resolution property turns out to be stronger than the stability of the kernel bundle $M_V$ of a Brill--Noether general curve $C\subseteq \PP^r$. It has been a long standing conjecture of Butler \cite{Bu} that the kernel bundle $M_V$ is (semi)stable for every $g\geq 3$ and a general choice of $(C, \ell)$. (Note that there is a much studied version of Butler's conjecture for coherent systems of higher rank). Bhosle, Brambilla-Paz and Newstead \cite{BBPN2}, building on significant previous work \cite{AFO}, \cite{BH},  \cite{BBPN1}, \cite{EL}, \cite{Mi} involving a large variety of techniques, managed to show that in characteristic zero the kernel bundle of a BN curve is \emph{semistable} for every $g\geq 1$, and even stable when $g\geq 3$, $r\geq 5$ and $g\geq 2r-4$. Using the degeneration methods of this paper we offer a simple uniform proof of the strongest possible form of Butler's conjecture for general curves in projective space:

\begin{thm}\label{thm:stab}
If $C\subseteq \PP^r$ is a general Brill--Noether curve of genus $g\geq 2$, the kernel bundle $M_V$ is always stable,
unless \(g = 2\) and \(d = 2r\), where $r\geq 3$.
In this case, $M_V$ is  strictly semistable.
\end{thm}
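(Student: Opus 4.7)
My plan is to combine Theorem~\ref{thm:gen0} with the degeneration strategy of Theorem~\ref{thm:main}, proving semistability first, upgrading to strict stability outside the exceptional case via a parameter count, and producing an explicit semi-destabilizing subbundle in the exceptional case. I expect the main obstacle to be the transfer of semistability across the degeneration to reducible curves, where the unorthodox stability of (\ref{unorth}) must be used to control the central fibre so that openness of semistability in flat families applies to the smoothing.

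\textbf{Elliptic base case.} For a general elliptic $J\subseteq\PP^r$ of degree $d$, writing $d=ad_1$ and $r=ar_1$ with $\gcd(d_1,r_1)=1$, Theorem~\ref{thm:gen0} gives $T_{\PP^r|J}\cong\bigoplus_{i=1}^a E_i$ with each $E_i$ stable of common slope $(r+1)d/r$. Dualizing and twisting by $L$ yields $M_V\cong\bigoplus_{i=1}^a(E_i^\vee\otimes L)$, a direct sum of $a$ stable bundles of common slope $-d/r$; hence $M_V$ is polystable, and is stable precisely when $a=\gcd(d,r)=1$.

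\textbf{Induction and stability for $g\geq 2$.} I would induct on $(d,g)$ along the degenerations $C\leadsto C'\cup R$ used in the proof of Theorem~\ref{thm:main}, where $R\subseteq\PP^r$ is a rational normal curve attached to a BN curve $C'$ of degree $d-r$ and genus $g-\epsilon$ at $\epsilon+1$ points ($0\leq\epsilon\leq r+1$). Because $T_{\PP^r|R}\cong\OO_{\PP^1}(r+1)^{\oplus r}$ has integral slope, an openness argument analogous to Lemma~\ref{lm:ray-open}, combined with the unorthodox stability of (\ref{unorth}), should transfer semistability of $M_V$ from $C'$ to a suitable limit on $C'\cup R$ and then, via openness in flat families, to $M_V$ on a generic smoothing. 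For the upgrade to strict stability, a proper rank-$i$ subbundle $F\subseteq M_V$ of slope exactly $-d/r$ (which forces $r\mid id$) would yield a line subbundle $\bigwedge^iF\subseteq\bigwedge^iM_V$ of extremal degree $-id/r$; a parameter count on the BN moduli shows that the locus of $(C,L)$ admitting such a subbundle has strictly positive codimension whenever $g\geq 3$, or $g=2$ and $d\neq 2r$, so it cannot contain a general BN pair.

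\textbf{Exceptional case $(g,d)=(2,2r)$, $r\geq 3$.} Twisting the defining sequence $0\to M_V\to V\otimes\OO_C\to L\to 0$ by $K_C$ and passing to cohomology, with $h^0(K_C)=2$, $h^0(L\otimes K_C)=2r+1$, and $\dim V=r+1$, the long exact sequence gives
$$h^0(M_V\otimes K_C)\geq 2(r+1)-(2r+1)=1,$$
providing a nonzero injection $K_C^\vee\hookrightarrow M_V$. Its saturation in $M_V$ is a line subbundle of slope at least $-2=\mu(M_V)$; combined with the semistability established above, equality of slopes must hold, exhibiting $M_V$ as strictly semistable.
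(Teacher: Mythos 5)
Your elliptic base case and your treatment of the exceptional case $(g,d)=(2,2r)$ are both correct; the latter is exactly the computation the paper records in the introduction ($h^0(C,M_V\otimes\omega_C)\geq 2(r+1)-(2r+1)>0$ gives the subbundle $\omega_C^{\vee}\subseteq M_V$ of slope $-2=\mu(M_V)$). The inductive step via attaching a rational normal curve at $\epsilon+1$ points is also sound and is Lemma~\ref{lm:d-r} of the paper. But there is a genuine gap where the real difficulty of the theorem lies: that reduction only lowers $d$ by $r$ at each stage, so it terminates in the range $d\leq 2r$ (for instance, canonical curves $(d,g)=(2r,r+1)$ reduce to the rational normal curve $(r,0)$, and many chains terminate at $(2r,2)$ or $(2r,g)$ with $2\leq g\leq r+1$). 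In all of these terminal cases the bundle one reduces to is only polystable or strictly semistable, so stability cannot be propagated upward from them; the range $d\leq 2r$ must be attacked directly, and your only tool there is an unspecified ``parameter count.'' That count is not routine and is not carried out: passing to the line subbundle $\det F\subseteq\bigwedge^i M_V$ of degree $-id/r$ turns the problem into bounding maximal line subbundles of $\bigwedge^i M_V$, which runs straight into the difference-variety obstructions of \eqref{eq:diffvar}--\eqref{eq:bound_mrc} (indeed the strong Raynaud condition \emph{fails} for part of the range $d<2r$), and is essentially as hard as the stability statement itself.

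The paper closes this range by a different degeneration: it attaches a $2$-secant line $L_1$ to a BN curve of degree $d-1$ and genus $g-1$ (so degree and genus each increase by $1$), and then analyzes an arbitrary uniform-rank subbundle $F$ of the pullback of $T_{\PP^r|C\cup L_1}$ to the normalization using the adjusted slope $\mu^{\adj}$ of Definition~\ref{def:st1}, the pointing-bundle filtration $T_{f\to u\cdot p}\oplus T_{f\to v\cdot q}\subseteq F_{|C}$, semistability of the projected curve by induction, and the explicit positivity of the polynomials $P_0,P_1$; corank-one subbundles are ruled out separately via global generation of $M_V^{\vee}$ (Proposition~\ref{prop:c1q}), canonical curves are treated separately, and the case $(d,g)=(3r,2)$ needs its own argument with two $1$-secant lines and Lemma~\ref{rat-base-stab} because it would otherwise reduce to the strictly semistable case $(2r,2)$. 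None of this machinery is replaced by anything concrete in your proposal, so as written the proof establishes semistability plus the exceptional case, but not the stability assertion that is the content of the theorem.
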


We stress that Theorem \ref{thm:stab} is valid in arbitrary characteristic. This fact and the  \emph{stability} of the bundle $M_V$ in all cases when $g\geq 3$ are new. The strict semistability of the kernel bundle for $g=2$ has been observed before, see \cite[Theorem 8.1]{BBPN1}. If $C\stackrel{|V|}\hookrightarrow \PP^r$ is a genus $2$ curve of degree $2r$ embedded by a linear system $(L,V)$,  then by a dimension count we see that $H^0(C, M_V\otimes \omega_C)\neq 0$,  hence $M_V$ appears as an extension
$$0\longrightarrow \omega_C^{\vee}\longrightarrow M_V\longrightarrow Q\longrightarrow 0,$$
and is therefore strictly semistable. Theorem \ref{thm:stab} shows that it is only this case in genus $2$ when $M_V$ fails to be stable. 
\vskip 6pt

\noindent {\bf{Strong semistability of kernel bundles in positive characteristic.}}

\vskip 4pt

\noindent Turning to the case of a smooth curve $C$  over a field $k$ of characteristic $p>0$, denoting by $F\colon C\rightarrow C$ the absolute Frobenius morphism, it is well known that the pullback under $F$ does not preserve  the stability of vector bundles over $C$.\footnote{A vivid illustration of this fact is provided by the bundle of locally exact differentials defined in Raynaud's paper \cite{R}. The rank $p-1$ vector bundle $B$ on $C$ defined by the exact sequence $0\to B\to F_*\omega_C\to \omega_{C}\to 0$ has been shown to be stable in \cite{R}, but its Frobenius pullback possesses a subbundle $B_2\subseteq F^*B$ such that $F^*B/B_2\cong \omega_C$. Since $\mu(B)=g-1$, this shows that $F^*(B)$ is unstable for $p\neq 2$ and $g\geq 2$.}
Accordingly, a vector bundle $E$ on $C$ is said to be \emph{strongly (semi)stable}
if the Frobenius pullback $(F^e)^*(E)$ is (semi)stable for every $e\geq 0$.

\vskip 3pt

For a smooth  embedded curve $C\stackrel{|V|}\hookrightarrow \PP^r$ of degree $d$ the strong semistability of the syzygy bundle $M_V$ has been related by Brenner \cite{Br} and Trivedi \cite{Tr} to the \emph{Hilbert-Kunz multiplicity} of its coordinate ring. Precisely, for a projective variety $X\subseteq \PP^r$ having coordinate ring $S(X)$, one defines the Hilbert-Kunz function of $X$ by setting
$$\mathbb N\ni e\mapsto \mathrm{HK}_X(p^e):=\mathrm{dim}_k \ \frac{S(X)}{\langle x_0^{p^e}, \ldots, x_r^{p^e}\rangle \cdot S(X)}.$$
Following Kunz and Monsky \cite{Mon}, the  \emph{Hilbert-Kunz multiplicity} $e_{\mathrm{HK}}(X)$ of $X\subseteq \PP^r$ is defined as the leading coefficient of the function $\mathrm{HK}(p^e)$, precisely
\[\mathrm{HK}_X(p^e)=e_{\mathrm{HK}}(X)\cdot p^{e(\mathrm{dim}(X)+1)}+O\bigl(p^{e\cdot \mathrm{dim}(X)}\bigr).\]
In the case when $C\subseteq \PP^r$ is a smooth curve, using Langer's important work \cite{Lan} on strong Harder-Narasimhan filtrations in positive characteristic, it has been showed in \cite{Br} and \cite{Tr} that $e_{\mathrm{HK}}(C, V)$ is a rational number, though its exact value remains hard to compute.  However, it has been observed independently in \cite[Corollary 2.7]{Br}, \cite[Proposition 2.5]{Tr} that under the hypothesis that $M_V$ is strongly semistable the Hilbert-Kunz multiplicity  has the simple formula
$$e_{\mathrm{HK}}(C, V)=\frac{d(r+1)}{2r}.$$
Our techniques yield the following result in this direction:

\begin{thm} \label{thm:semistab-strong}
Let \(C \subseteq \PP^r\) be a very general Brill--Noether curve of genus \(g \geq 1\) and degree \(d \geq 2r\) over a field of characteristic $p>0$.
Then \(M_V\) is strongly semistable, in particular its Hilbert-Kunz multiplicity equals $\frac{d(r+1)}{2r}$.
\end{thm}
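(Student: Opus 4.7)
The strategy is to show that for a very general Brill--Noether curve $C\subseteq\PP^r$, every Frobenius pullback $(F^e)^*M_V$ is semistable. Indeed, tensoring the defining sequence of $M_V$ with $(F^e)^*$ yields
$$0\longrightarrow (F^e)^*M_V\longrightarrow V^{(p^e)}\otimes\OO_C\longrightarrow L^{p^e}\longrightarrow 0,$$
identifying $(F^e)^*M_V$ with the kernel bundle $M_{V^{(p^e)}}$ of the Frobenius-twisted linear series $V^{(p^e)}\subseteq H^0(C,L^{p^e})$. Once strong semistability is established, the Hilbert--Kunz formula follows immediately from the Brenner--Trivedi identity recalled before the theorem.

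I would run the same inductive degeneration as in the proof of Theorem~\ref{thm:main} and observe that at the bottom of the induction, strong semistability is automatic on each component of the nodal special fiber. On a rational normal curve $R_i\subseteq\PP^r$, the restriction $M_V|_{R_i}\cong\OO_{\PP^1}(-1)^r$ is polystable with isomorphic summands, so every Frobenius pullback remains polystable. On the general elliptic curve $J\subseteq\PP^r$ provided by Theorem~\ref{thm:gen0}, the tangent bundle $T_{\PP^r|J}$ splits as a direct sum of stable bundles of rank $r_1$ and degree $(r+1)d_1$ with $\gcd(r_1,(r+1)d_1)=1$, and the Atiyah/Oda classification of vector bundles on elliptic curves guarantees in any characteristic that semistability on an elliptic curve is preserved by every Frobenius pullback. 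For each fixed $e\geq 0$, semistability of $(F^e)^*M_V$ is then an open condition on the base of the smoothing family (using the stability notion in (\ref{unorth}) to uniformly accommodate the nodal special fiber), satisfied on the special fiber and hence on a dense Zariski-open $U_e$ of the base.

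A very general point of the base lies in $\bigcap_{e\geq 0}U_e$, which is non-empty as a countable intersection of dense opens over an uncountable algebraically closed field; this is precisely why the statement requires a very general, rather than merely a general, Brill--Noether curve. The main obstacle I anticipate is the elliptic base case in positive characteristic: one must verify that the polystable bundle $T_{\PP^r|J}$ of Theorem~\ref{thm:gen0} remains semistable under every Frobenius pullback. When $p$ is coprime to $r_1$ this is transparent, because Frobenius then acts as a well-defined map between moduli spaces of stable bundles of the same coprime rank and degree; when $p$ divides $r_1$ one must appeal more delicately to the Atiyah/Oda classification, which expresses semistable bundles of rational slope as successive extensions (up to isogeny pullback) of line bundles of that slope, forcing the Frobenius pullback to remain semistable even when it ceases to be polystable.
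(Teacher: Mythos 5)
Your proposal follows essentially the same route as the paper: reduce by the inductive attachment of rational normal curves to the range $2r \leq d \leq 3r-1$, degenerate to a union of an elliptic curve and rational normal curves, use that semistable bundles on elliptic curves are strongly semistable (Oda) and that balanced bundles on $\PP^1$ are trivially strongly semistable, and then conclude by openness of semistability of each Frobenius pullback together with a countable intersection of dense opens (which is exactly the paper's reason for requiring ``very general''). The only cosmetic difference is that the paper packages the component-wise argument into Corollary~\ref{lm:components-to-special-strong} and the elliptic base case into Lemma~\ref{ss-sr}, but the substance is identical.
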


The requirement that $C$ be \emph{very} general comes from the fact that the locus of curves $C\subseteq \PP^r$ with a strongly semistable kernel bundle $M_V$ is a countable intersection of open subsets of the stack $\mathcal{G}^r_d$.  Concerning the strong stability of kernel bundles we have the following results:

\begin{thm} \label{thm:stab-strong}
Let \(C \subseteq \PP^r\) be a very general Brill--Noether curve of genus \(g\) and degree \(d\). Then \(M_V\)
is strongly stable in the following cases:
\begin{enumerate}
\item \label{ss1} If \(g \geq r + 1\) and \(d\) is a multiple of \(r\).
\item \label{ss2} If \(g \geq 2\) and \(d \geq 2r + 1\), and \(r\) is not a multiple of the characteristic.
\item \label{ss3} If \(g \geq r + 2\) and \(d \geq 3r\).
\end{enumerate}
\end{thm}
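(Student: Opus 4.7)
The starting point is Theorem \ref{thm:semistab-strong}, which already gives strong semistability of $M_V$ for a very general BN curve with $g \geq 1$ and $d \geq 2r$. What remains is to rule out that $(F^e)^* M_V$ is strictly semistable for any $e \geq 0$. Stability of a fixed $(F^e)^* M_V$ is an open condition on the stack $\mathcal{G}^r_d$, and the very-general hypothesis permits a countable intersection, so it suffices to exhibit, for each $e$, a single BN curve where $(F^e)^* M_V$ is stable. The identification $(F^e)^* M_V \cong M_{V^{(p^e)}}$, where $V^{(p^e)} \subseteq H^0(L^{p^e})$ is the $p^e$-th Frobenius twist, reframes the question as the stability of a kernel bundle for a twisted linear series of rank $r$ and degree $-p^e d$, hence slope $-p^e d / r$.

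\textbf{Degeneration setup.} Mirror the induction of Theorem \ref{thm:main}: start with a general elliptic BN curve $J\subseteq \PP^r$ supplied by Theorem \ref{thm:gen0}, and attach rational normal curves at prescribed nodes to reach a BN curve of degree $d$ and genus $g$. On each rational component $R$ one has $M_V|_R \cong \OO_{\PP^1}(-1)^{\oplus r}$, whence $(F^e)^* M_V|_R \cong \OO_{\PP^1}(-p^e)^{\oplus r}$, whose sub-bundles of slope $-p^e$ are exactly the direct summands. On $J$, writing $d = a d_1$, $r = a r_1$ with $\gcd(d_1, r_1) = 1$, the decomposition $T_{\PP^r|J} = \bigoplus_{i=1}^a E_i$ from Theorem \ref{thm:gen0} yields $M_V|_J = \bigoplus F_i$ and hence $(F^e)^* M_V|_J = \bigoplus (F^e)^* F_i$, with general determinants. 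Any destabilizing sub-bundle of $(F^e)^* M_V$ on the smoothing must, by semicontinuity, restrict to a direct summand on each component, and its compatibility across the nodes becomes the crucial constraint.

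\textbf{Case analysis.} For case (1), where $r \mid d$ forces $r_1 = 1$, each $F_i$ is a line bundle and $(F^e)^* F_i$ is again a general line bundle of the correct slope. The hypothesis $g \geq r + 1$ supplies enough attaching nodes that general gluing forbids any sub-bundle of slope $-p^e d / r$ from assembling consistently across $J$ and the rational components, ruling out destabilizing sub-bundles of every rank. For case (2), the hypothesis $p \nmid r$ combined with $r_1 \mid r$ and $\gcd(r+1, r_1) = 1$ gives $\gcd(p^e(r+1)d_1, r_1) = 1$ for every $e$, so each $(F^e)^* E_i$ remains stable on $J$ with general determinant, and the bound $d \geq 2r + 1$ avoids the exceptional locus $(g, d) = (2, 2r)$ of Theorem \ref{thm:stab}. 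The argument from case (1) then proceeds, with $g \geq 2$ attaching nodes sufficient to break any destabilizing sub-bundle of rank $s r_1$, $1 \leq s < a$. For case (3), the strengthened hypotheses $g \geq r + 2$ and $d \geq 3r$ provide enough rational-curve attachments to defeat destabilizing sub-bundles even when $p \mid r$ causes some $(F^e)^* E_i$ to become unstable on the elliptic component.

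\textbf{Main obstacle.} The principal difficulty is controlling the nodal gluing uniformly in $e$: a potential destabilizing sub-bundle of $(F^e)^* M_V$ on the smoothing must restrict to compatible direct summands on $J$ and on each $R$, and the parameter count of gluing freedom versus compatibility constraints is precisely what the genus and degree hypotheses balance in each case. The most delicate step is case (3), where possible Frobenius instability on the elliptic summands forces the needed rigidity to come entirely from the rational-component attachments, explaining the stronger bounds $g \geq r + 2$ and $d \geq 3r$.
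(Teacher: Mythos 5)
Your overall skeleton is right -- reduce via Lemma \ref{lm:d-r}, degenerate, and use openness of stability of each fixed Frobenius pullback to handle the countable intersection -- but the core of the proof is missing. In all three cases your degeneration (elliptic curve plus rational normal curves) produces a nodal curve on which \emph{every} component carries a restricted bundle that is only strictly semistable: on a rational component $(F^e)^*M_V$ restricts to $\OO_{\PP^1}(-p^e)^{\oplus r}$, whose slope-$(-p^e)$ subbundles form a whole Grassmannian, and in case (1) the elliptic restriction is a direct sum of $r$ line bundles of equal degree. Corollary \ref{lm:components-to-special-strong} then only yields semistability, and the statement that ``general gluing forbids any sub-bundle \ldots from assembling consistently'' is precisely the theorem; no mechanism is offered for it. The paper supplies two distinct mechanisms that your proposal lacks. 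For case (1) it reduces to canonical curves ($d=2r$, $g=r+1$) and degenerates to a union of \emph{two rational normal curves} meeting at $r+2$ points; the triviality of $\PP T_{\PP^r|R'}$ and $\PP T_{\PP^r|R''}$ gives two systems of canonical identifications $\phi'_{ij},\phi''_{ij}$ of the $\PP T_{p_i}(\PP^r)$, a destabilizing subbundle forces a collection of proper subspaces $\Lambda_i$ invariant under both, and a monodromy-plus-eigenvalue argument (the directions $\overrightarrow{p_1,p_j}$ are eigenvectors of $\phi'_{21}\circ\phi''_{12}$, which has distinct eigenvalues) shows no such $\Lambda_i$ exist. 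This Frobenius-robust combinatorial argument is the heart of the theorem and does not appear in your proposal.

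For cases (2) and (3) the paper sidesteps the gluing analysis entirely by arranging that \emph{one} component is already strongly \emph{stable}, so that Corollary \ref{lm:components-to-special-strong} applies directly: in case (2) it attaches to $J$ a second elliptic curve $J'$ of degree $r+1$ (obtained by smoothing $R\cup L_1$), whose restricted tangent bundle has degree $(r+1)^2$ coprime to $r$ and, since $p\nmid r$, stays coprime under Frobenius, hence is strongly stable; in case (3) it attaches a canonical curve $C'$ (a smoothing of $R\cup R'$), strongly stable by case (1). Your reading of the hypothesis $p\nmid r$ (keeping the individual summands $(F^e)^*E_i$ stable on $J$) does not resolve the problem, since a direct sum of stable bundles of equal slope is still strictly semistable, and your explanation of the bounds in case (3) as supplying ``enough rational-curve attachments'' does not match how they are actually used. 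As written, the proposal defers all of the genuine content to an unproven transversality claim.
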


An immediate consequence of Theorem \ref{thm:stab-strong} is that the kernel bundle $M_{\omega_C}$ of a very general canonical curve $C\subseteq \PP^{g-1}$ of genus $g\geq 3$ is strongly stable. Note that whereas we know that the kernel bundle $M_{\omega_C}$ of \emph{every}
non-hyperelliptic canonical curve $C\subseteq \PP^{g-1}$ is stable, we cannot hope for such a result for strong semistability. There are examples of smooth canonical curves $C\subseteq \PP^{g-1}$ of small genus defined over $\mathbb Q$, such that for a Zariski dense set of primes $p$ the mod $p$  reduction of the kernel bundle $M_{\omega_C}$ is strongly semistable and for another dense set of primes the reduction mod $p$ of $M_{\omega_C}$ is not strongly semistable, see Remark \ref{rmk:fermat}.

\vskip 3pt

Key to the proof of both Theorems \ref{thm:semistab-strong} and \ref{thm:stab-strong} is the fact that for elliptic curve in positive characteristics semistability, strong semistability and satisfying the strong Raynaud condition are equivalent properties (see also Lemma \ref{ss-sr}). Using this, the inductive argument used to prove the Minimal Resolution property can be adapted to establish the strong (semi)stability of $M_V$ in every genus.

\vskip 5pt

\noindent  {\small{{\bf{Acknowledgments}}: Farkas was supported by the DFG Grant \emph{Syzygien und Moduli} and by the ERC Advanced Grant SYZYGY of  the European Research Council (ERC) under the European Union Horizon 2020 research and innovation program (grant agreement No. 834172).
Larson was supported by NSF grants DMS-1802908 and DMS-2200641.
Work on this paper has been finalized while the first author visited the R\'enyi Institute of Mathematics in Budapest.
 }}

\section{Syzygies of points on curves and the Raynaud condition}\label{sec:generalities}

We set throughout $S:= k[x_0, \ldots, x_r]$, where $k$ is an algebraically closed field. For a subscheme $\Gamma\subseteq \PP^r$, let $S(\Gamma)$ be its coordinate ring and denote by $$b_{i,j}(Z):=\mbox{dim } \mbox{Tor}_i^S\bigl(S(\Gamma), k\bigr)_{i+j}=\mbox{dim } K_{i,j}\bigl(\Gamma, \OO_{\Gamma}(1)\bigr)$$
the corresponding Betti number of $i$-th syzygies of weight $j>0$. By the very definition of the Castelnuovo--Mumford regularity of $\Gamma$, we have $b_{i,j}(\Gamma)=0$ for $j\geq \mbox{reg}(\Gamma)+1$.

\vskip 4pt

If $\Gamma$ is a subscheme of a smooth curve $C\subseteq \PP^r$, then its Betti numbers can be described geometrically as we now explain.  Let $\ell=(L,V)\in G^r_d(C)$ be a globally generated linear system inducing the map $f=f_{\ell} \colon C \to \PP^r$. We consider the kernel bundle (also referred to as the  Lazarsfeld bundle) $M_V:=\mbox{Ker}\bigl\{V\otimes \OO_C\stackrel{\mathrm{ev}}\longrightarrow L\bigr\}$. Via the Euler sequence, we have the identification  $f^*T_{\PP^r}\cong M_V^{\vee}\otimes L$.
The pullback of the tangent bundle \(f^* T_{\PP^r} = T_{\PP^r|C}\) is intimately related to
many aspects of the geometry of $f$, including its deformation theory when the source curve $C$ is fixed, and
the computation of Koszul cohomology groups of $C$. It is well known that the pair $(C, \ell)$ corresponds to a BN curve if  $H^1(C, f^*T_{\PP^r})=0$.

\vskip 4pt

If $\eta$ is a line bundle on $C$, then the  Koszul cohomology group $K_{i,j}(C;\eta,L)$ is defined as the cohomology of the following complex:
$$\bigwedge^{i+1} H^0(L)\otimes H^0(\eta\otimes L^{j-1})\stackrel{d_{i+1,j-1}}\longrightarrow \bigwedge^i H^0(L)\otimes H^0(\eta\otimes L^j)\stackrel{d_{i,j}}\longrightarrow \bigwedge^{i-1} H^0(L)\otimes H^0(\eta\otimes L^{j+1}),$$
where $d_{i,j}$ denotes the corresponding Koszul differential. Koszul cohomology groups can be describes as ordinary cohomology groups for (twists of) exterior powers of kernel bundles and one has the following well-known identifications:
\begin{equation}\label{eq:koszul_twisted}
K_{i,1}(C; \eta, L)=H^0\Bigl(C, \bigwedge^i M_L\otimes L\otimes \eta\Bigr) \ \ \mbox{ and } \ \ K_{i-1,2}(C;\eta, L)=H^1\Bigl(C, \bigwedge^i M_L\otimes L\otimes \eta\Bigr).
\end{equation}

\vskip 3pt

Assume $\Gamma\subseteq C$ is a subscheme consisting of $\gamma$ distinct points and we make the assumption
\begin{equation}\label{eq:u}
u:=1+\Bigl\lfloor \frac{\gamma+g-1}{d}\Bigr\rfloor\geq 1+\mbox{reg}(C).
\end{equation}
Then, as explained in \cite[Proposition 1.6]{FMP} or \cite[Section 2]{CEFS}, we have
\begin{equation}\label{eq:betti=koszul}
b_{i+1,u-1}(\Gamma)=\mbox{dim } K_{i,1}\bigl(C; L^{u-1}(-\Gamma),L\bigr) \ \ \mbox{ and } \ \ \ b_{i,u}(\Gamma)=\mbox{dim } K_{i-1,2}\bigl(C;L^{u-1}(-\Gamma),L\bigr).
\end{equation}

The Minimal Resolution property (MRP) for the embedded curve $C\stackrel{|V|}\hookrightarrow \PP^r$ is then the statement $b_{i,u}(\Gamma)\cdot b_{i+1,u-1}(\Gamma)=0$ for all $i\geq 0$ and for all integers $u$ satisfying (\ref{eq:u}), that is, the resolution of the $0$-dimensional scheme $\Gamma$ is the expected one and the Betti numbers are as small as the geometry of $C$ allows. We now introduce the following:

\begin{defi} \label{def:raynaud}
We say that a vector bundle \(E\) on a (possibly singular) curve \(C\)
satisfies the \emph{weak Raynaud condition} if, for any degree \(d\),
there exists a line bundle $\eta\in \mbox{Pic}^d(C)$ with either
\begin{equation} \label{h0-or-h1}
H^0(C, E \otimes \eta) = 0 \ \quad \text{or} \ \quad H^1(C, E \otimes \eta) = 0.
\end{equation}
We say that \(E\) satisfies the \emph{strong Raynaud condition} if every wedge power \(\bigwedge^i E\) satisfies
the weak Raynaud condition.
\end{defi}

\begin{rem}
If \(C\) is irreducible, then \(\Pic^d (C)\) is irreducible,
so it is equivalent to ask for \eqref{h0-or-h1} to hold
for a \emph{general} line bundle $\eta\in \mbox{Pic}^d(C)$.
We take existence of $\eta$ in the definition because this behaves better
when \(C\) is reducible and our arguments involve degeneration
to reducible curves.
\end{rem}

Definition \ref{def:raynaud} goes back to the fundamental work of Raynaud \cite{R}. Let $\mathrm{SU}_C(r,d)$ be the moduli space of semistable vector bundles of rank $r$ and fixed determinant of degree $d$ on a smooth curve $C$. It is  known that if $E$ is a stable vector bundle of rank $r$ and integer slope $\mu(E)=\mu\in \mathbb Z$ on $C$,  the point $[E]\in \mathrm{SU}_C(r, r\mu)$ is not a base point of the \emph{determinant line bundle} $\Theta$ generating $\mathrm{Pic}\bigl(\mathrm{SU}_C(r, r\mu)\bigr)$ if and only if $E$ satisfies the weak Raynaud condition, see \cite{P2}.

\vskip 3pt

Via the identifications (\ref{eq:koszul_twisted}) and (\ref{eq:betti=koszul}), we conclude that $C\stackrel{|V|}\hookrightarrow \PP^r$ satisfies the Minimal Resolution property if and only if the kernel bundle $M_V$ satisfies the strong Raynaud condition, which is also precisely condition (\ref{eq:raynaud}) discussed in the introduction.

\subsection{The Raynaud condition under degeneration.}\label{subsec:ray}

In this section, our primary goal is to study the behavior of (semi)stability
and the Raynaud condition under degeneration, which will play a major role in the proof of Theorem \ref{thm:main}.

Let \(\C \to \Delta\) be a family of nodal curves over
the spectrum of a DVR with residue field \(k\) and fraction field \(K\),
and \(\E\) be a vector bundle on \(\C\). We assume the total space $\C$ to be smooth.
Write \(C = \C_{k}\) and \(\C^* = \C_{\bar{K}}\) for the special and general fibers
respectively.
Let \(E := \E|_C\) and \(\E^* := \E|_{\C^*}\).

By inspection, the Raynaud condition is open in families.
In other words, if \(E\) satisfies the weak (respectively strong)
Raynaud condition, then so does \(\E^*\).
The condition that \(E\) satisfies the weak (respectively strong)
Raynaud condition can in turn be expressed as a separate condition on each
irreducible component of \(C\):

\begin{lm} \label{lm:ray-open}
If the restrictions of \(E\) to every component of \(C\)
satisfy the weak (respectively strong) Raynaud condition,
and the slope of \(E\) along all but one component of \(C\) is integral,
then \(E\) satisfies the weak (respectively strong)
Raynaud condition.
\end{lm}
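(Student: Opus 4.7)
The plan is to reduce the strong Raynaud case to the weak one, and to establish the weak case by induction on the number of irreducible components of $C$. The reduction is immediate: since $\mu(\bigwedge^i F) = i \cdot \mu(F)$ for any vector bundle $F$, the integer-slope hypothesis on $E|_{C_j}$ passes to $\bigwedge^i E|_{C_j}$, while the strong Raynaud hypothesis on $E|_{C_j}$ is by definition the weak Raynaud hypothesis on every $\bigwedge^k E|_{C_j}$. Hence applying the weak case of the lemma to $\bigwedge^i E$ for each $i$ yields the strong case for $E$.

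For the weak case, let $C_1$ be the (unique) component on which $\mu(E|_{C_1})$ may fail to be integral, write $C = C_1 \cup C'$ with $C' = C_2 \cup \cdots \cup C_n$, and set $\Delta := C_1 \cap C'$ with $|\Delta| = \delta$ and $\Delta_1 \subset C_1$ its preimage under normalization. Given a target total degree $d$ for $\eta$, I begin by loading most of the degree onto $C'$. Because $r$ divides $\deg(E|_{C'})$, the number $c' := p_a(C') - 1 - \deg(E|_{C'})/r$ is an integer with $\chi(E|_{C'} \otimes \eta') = 0$ for every $\eta' \in \Pic^{c'}(C')$. Applying the inductive hypothesis on each connected component of $C'$ (which by assumption has \emph{all} components of integer slope), weak Raynaud at the corresponding critical degrees yields an $\eta' \in \Pic^{c'}(C')$ with either $H^0 = 0$ or $H^1= 0$ for $E|_{C'} \otimes \eta'$; but since $\chi = 0$, \emph{both} cohomology groups must vanish.

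Now set $a_1 := d - c'$. Inside $\Pic^{a_1}(C_1)$ I consider the two open subsets where the Raynaud condition for $E|_{C_1}$ holds at degrees $a_1$ and $a_1 - \delta$ (the latter pulled back under $\xi \mapsto \xi(-\Delta_1)$). Both are nonempty by the weak Raynaud hypothesis on $C_1$; since $\Pic^{a_1}(C_1)$ is irreducible, their intersection is nonempty. I pick $\eta_1$ in the intersection and glue it to $\eta'$ along $\Delta$ (any gluing is fine) to obtain $\eta \in \Pic^d(C)$. The partial normalization sequence for $C$ along $\Delta$, combined with the vanishings on $C'$, collapses the long exact cohomology sequence to
\[0 \to H^0(C, E \otimes \eta) \to H^0(C_1, E|_{C_1} \otimes \eta_1) \xrightarrow{\operatorname{ev}_{\Delta_1}} (E|_{C_1} \otimes \eta_1)|_{\Delta_1} \to H^1(C, E \otimes \eta) \to H^1(C_1, E|_{C_1} \otimes \eta_1) \to 0,\]
where the middle map is genuine evaluation at $\Delta_1$ (gluing-independent, precisely because the $C'$-contribution to $H^0$ and $H^1$ vanishes). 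If $H^0(E|_{C_1} \otimes \eta_1) = 0$, then $H^0(C, E \otimes \eta) = 0$ and we are done. Otherwise $H^1(E|_{C_1} \otimes \eta_1) = 0$ by our choice, in which case $\ker \operatorname{ev}_{\Delta_1} = H^0(E|_{C_1} \otimes \eta_1(-\Delta_1))$ and $\operatorname{coker} \operatorname{ev}_{\Delta_1} = H^1(E|_{C_1} \otimes \eta_1(-\Delta_1))$, and one of these vanishes by the second constraint on $\eta_1$.

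I expect the main obstacle to be the simultaneous coordination of two Raynaud-type conditions on $C_1$, at degrees $a_1$ and $a_1 - \delta$; this is handled via the openness of each condition together with the irreducibility of $\Pic^{a_1}(C_1)$. A secondary delicate point is that the inductive hypothesis must be used not merely to get one of $H^0, H^1$ to vanish on $C'$ but to force \emph{both} to vanish, which is exactly what the integer-slope hypothesis supplies by making the critical Euler characteristic on $C'$ equal to zero.
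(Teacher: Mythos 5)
Your proof is correct, and it rests on the same core mechanism as the paper's: on components of integral slope one can choose the twisting degree so that $\chi = 0$, whence the weak Raynaud condition forces \emph{both} cohomology groups to vanish there, and a decomposition sequence then transfers the whole computation to the remaining component. The differences are organizational. The paper runs the induction by peeling off a single integral-slope component $Y$ per step, writing $C = X \cup_\Gamma Y$ and using the sequence $0 \to E_{|X}\otimes\eta_{|X}(-\Gamma) \to E\otimes\eta \to E_{|Y}\otimes\eta_{|Y}\to 0$, which identifies $H^i(C, E\otimes\eta)$ with $H^i(X, E_{|X}\otimes\eta_{|X}(-\Gamma))$ outright, so only \emph{one} Raynaud condition (at the twisted-down degree) is ever invoked on the residual curve. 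You instead isolate the exceptional component $C_1$ in one stroke, apply the inductive hypothesis to the connected components of the complement, and use the partial normalization sequence; this forces you to coordinate \emph{two} Raynaud conditions on $C_1$ (at degrees $a_1$ and $a_1-\delta$), which you correctly handle via openness and the irreducibility of $\Pic^{a_1}(C_1)$. Had you used the sequence $0 \to E|_{C_1}\otimes\eta_1(-\Delta_1) \to E\otimes\eta \to E|_{C'}\otimes\eta' \to 0$ in place of the normalization sequence, the condition at degree $a_1$ would become superfluous and your argument would coincide with the paper's up to the direction of the induction. Both routes are valid; the paper's is marginally leaner, while yours makes the role of the single non-integral component more transparent.
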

\begin{proof}
Since the conditions of the lemma imply that \(\bigwedge^i E\) also satisfies the conditions
of the lemma, it suffices to consider the case of the weak Raynaud condition. It suffices to show there is a line bundle \(\eta\) of any given degree \(d\) on \(C\)
with \(H^0(C, E \otimes \eta) = 0\) or \(H^1(C, E \otimes \eta) = 0\).
For this we use induction on the number of components
of \(C\); if \(C\) is irreducible, the desired result holds by assumption.

\vskip 3pt

For the inductive step, write \(C = X \cup_\Gamma Y\), where \(Y\) is a component
on which the slope of \(E\) is integral. Let \(\eta\) be a line bundle on \(C\)
such that \(\eta_{|Y}\) is a general line bundle of degree \(g(Y) - 1 - \mu(E_{|Y})\),
and \(\eta_{|X}\) is a line bundle of degree $d - g(Y) + 1+ \mu(E_{|Y})$ on $X$, such that $\eta_{|X}(-\Gamma)\in \mbox{Pic}(X)$ satisfies (\ref{h0-or-h1}) with respect to the vector bundle $E_{|X}$.
Since \(H^0(Y, E_{|Y} \otimes \eta_{|Y}) \cong H^1(Y, E_{|Y} \otimes \eta_{|Y}) = 0\), the long exact sequence
in cohomology attached to the short exact sequence
\[0 \longrightarrow E_{|X} \otimes \eta_{|X}(-\Gamma) \longrightarrow E \otimes \eta \longrightarrow E_{|Y} \otimes \eta_{|Y} \longrightarrow 0\]
implies $H^i(C, E\otimes \eta)\cong H^i(X, E_{|X}\otimes \eta_{|X}(-\Gamma))$ for $i=0,1$, therefore either \(H^0(X, E \otimes \eta) = 0\) or
\(H^1(X, E \otimes \eta) = 0\),
as desired.
\end{proof}

\subsection{Stability conditions for nodal curves.}\label{unorth}

To express that (semi)stability is open,
we first need a good definition of (semi)stability
for vector bundles on nodal curves.
Let \(C\) be a connected nodal curve, and write
\(\nu \colon \tilde{C} \to C\) for the normalization map.
For each node \(p\) of \(C\), let \(p_1\) and \(p_2\)
denote the two points of \(\tilde{C}\) lying over \(C\).
Given a subbundle \(F \subseteq \nu^* E\),
we can compare the subspaces \(F_{|p_1}\) and \(F_{|p_2}\) inside the fibres \(E_{|p_1} \cong E_{|p_2}\).
The following definition, inspired by the concept of stability for parabolic bundles, appears in \cite{clv}.

\begin{defi}\label{def:st1}
Let $E$ be a vector bundle on a connected nodal curve $C$.
For a subbundle \(F \subseteq \nu^* E\) having uniform rank, define the \emph{adjusted slope}
\(\mu^\adj\) via
\[\mu^\adj(F) \colonequals \mu(F) - \frac{1}{\rk(F)} \sum_{p \in C_\sing} \codim_F\bigl(F_{|p_1} \cap F_{|p_2}\bigr).\]
Here, \( \codim_F(F_{|p_1} \cap F_{|p_2})\) refers to the codimension of \(F_{|p_1} \cap F_{|p_2}\)
in either \(F_{|p_1}\) or \(F_{|p_2}\)
(which have the same dimension). Observe that if $F$ is a pull-back of a vector bundle on $C$, then $\mu^\adj(F)=\mu(F)$.

We define \(E\) to be \emph{semistable} if, for all subbundles \(F \subseteq \nu^* E\),
\[\mu^\adj(F) \leq \mu(\nu^* E) = \mu(E),\]
and to be \emph{stable} if this inequality is strict for all proper subbundles of uniform rank.
\end{defi}

Note that Definition \ref{def:st1}  recovers the usual definition of (semi)stability if \(C\) is a smooth curve.
With this definition, (semi)stability is an open condition.

\begin{lm}\label{lm:stab-open}
If $E$ is semistable then so is $\E^*$.
\end{lm}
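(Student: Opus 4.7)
My plan is to argue by contradiction via a standard openness argument, adapted to handle the parabolic-type adjustment term appearing in Definition \ref{def:st1}. Suppose $\E^*$ is not semistable, and pick a saturated destabilizing subbundle $F^* \subseteq \E^*$ with $\mu(F^*) > \mu(\E^*)$. By properness of the relative Quot scheme $\operatorname{Quot}(\E / \C / \Delta)$, after possibly a ramified base change on $\Delta$ the quotient $\E^* \twoheadrightarrow \E^*/F^*$ spreads out to a $\Delta$-flat quotient $\E \twoheadrightarrow \mathcal{Q}$. Let $\mathcal{F} := \ker(\E \to \mathcal{Q})$, which is $\Delta$-flat with generic fibre $F^*$. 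Setting $F := \mathcal{F}_{|C}$, constancy of the Hilbert polynomial forces $\rk F = \rk F^*$ and $\deg F = \deg F^*$, so $\mu(F) = \mu(F^*) > \mu(E)$.

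Next I transport $F$ to the normalization: let $\bar F \subseteq \nu^* E$ be the saturation of the image of the natural map $\nu^* F \to \nu^* E$. Since $\tilde C$ is smooth, $\bar F$ is a vector subbundle of uniform rank $r' = \rk F$. The key inequality to prove is
\[ \deg_{\tilde C} \bar F - \sum_{p \in C_\sing} \codim_{\bar F}\bigl(\bar F_{|p_1} \cap \bar F_{|p_2}\bigr) \ \geq \ \deg_C F, \]
equivalently $\mu^{\adj}(\bar F) \geq \mu(F)$. Combined with the previous paragraph, this would yield $\mu^{\adj}(\bar F) > \mu(E)$, contradicting the semistability of $E$ and finishing the proof.

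I would establish the displayed inequality by a local analysis at each node. Writing $\O_{C,p} = k[[x,y]]/(xy)$, any torsion-free $F_p$ is isomorphic to $\O^a \oplus \mathfrak{m}^{\delta}$ for some $\delta = r' - a \geq 0$. A direct computation of $\nu^*\mathfrak{m}$---which at each preimage $p_i$ splits as a maximal-ideal summand $\mathfrak{m}_{p_i}$ plus a one-dimensional torsion summand---shows that, after killing torsion and then saturating inside $\nu^* E$, the degree of $\bar F$ exceeds that of $F$ by exactly $\delta$ at each branch, while the two fibres $\bar F_{|p_1}$ and $\bar F_{|p_2}$ meet in codimension at most $\delta$ inside each. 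Summing the local contributions across all nodes, combined with the equality of degrees between $F$ and $\bar F$ away from the nodes, yields the displayed inequality.

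The main obstacle is the local bookkeeping in the previous paragraph: one has to check that the degree increase from saturation always dominates the fibre-intersection codimension for every possible embedding of $\O^a \oplus \mathfrak{m}^\delta$ into $E$ near a node, and one must handle uniformly the cases in which the two branch-directions lining up inside $E_p$ partially coincide (reducing the codimension but also reducing the local degree gain in a controlled way). Once this local lemma is in place, the rest of the argument---extending the destabilizer via properness of the Quot scheme and contradicting the adjusted-slope inequality on the special fibre---is formal.
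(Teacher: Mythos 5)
Your overall architecture is exactly the one the paper relies on: the paper's proof is essentially a citation to \cite[Proposition 2.3]{clv}, and that proposition is proved by precisely your strategy --- extend the destabilizing quotient over $\Delta$ by properness of the relative Quot scheme, restrict to the special fibre, pass to the normalization, saturate, and show the resulting subbundle of $\nu^*E$ has adjusted slope at least $\mu(F^*)$ (and the same rank). So there is no difference in approach; the difference is that you attempt to supply the local analysis at the nodes that the paper outsources, and that local analysis is the entire content of the lemma. As written, your bookkeeping there is not right. First, the degree of the saturation $\bar F$ is not determined by the local isomorphism type $\O^a\oplus\mathfrak m^{\delta}$ of $F$ at the node --- it depends on how $F$ sits inside $E$ --- so ``exceeds that of $F$ by exactly $\delta$ at each branch'' cannot be correct as stated. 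Second, under the standard normalization of degree ($\deg = \chi - \rk\cdot\chi(\O)$ on each curve), pulling back and killing torsion \emph{decreases} the degree by $\delta$ per node (e.g.\ $\nu^*\mathfrak m/\mathrm{tors} \cong \O_{\tilde C}(-p_1-p_2)$ for $\delta = r' = 1$), and the compensating gain has to come from the saturation step and from the codimension term $\codim_{\bar F}(\bar F_{|p_1}\cap\bar F_{|p_2})$; the inequality $\mu^{\adj}(\bar F)\geq\mu(F)$ is a genuine balance between these contributions, not a termwise domination. You correctly flag this verification as the main obstacle, but since it is the only non-formal step, the proof is incomplete without it; in the context of the paper this is exactly what the citation to \cite{clv} supplies.
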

\begin{proof} This is essentially \cite[Proposition 2.3]{clv}. The proof given in \cite{clv} shows that if \(\E^*\)
has a subbundle \(\F^*\) of (strictly) smaller slope and rank \(s\),
then \(\nu^* E\) has a subbundle of (strictly) smaller adjusted slope
\emph{with the same rank \(s\)}.
\end{proof}
\vskip 3pt

Moreover, the condition that \(E\) is (semi)stable can in turn be expressed
as a separate condition on each irreducible component of \(C\):

\begin{lm}[Lemma 4.1 of \cite{clv}] \label{lm:components-to-special}
If the restrictions of \(E\) to every component
of \(C\) are semistable, then \(E\) is semistable.
If in addition the restriction to one such component is stable,
then \(E\) is stable.
\end{lm}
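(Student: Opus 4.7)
The plan is to decompose the problem along the irreducible components of $C$. Writing $C = \bigcup_i C_i$ for the irreducible decomposition and $\nu \colon \tilde{C} \to C$ for the normalization, we have $\tilde{C} = \bigsqcup_i \tilde{C_i}$, where $\nu_i \colon \tilde{C_i} \to C_i$ is the normalization of the $i$-th component. A subbundle $F \subseteq \nu^* E$ of uniform rank $s$ is then determined by its restrictions $F_i := F_{|\tilde{C_i}}$, each a subbundle of $\nu_i^*(E_{|C_i})$ of rank $s$. Correspondingly, the set of nodes $C_\sing$ partitions into \emph{internal} nodes (self-intersections within a single $C_i$) and \emph{crossing} nodes (at which two distinct components of $C$ meet); this partition is the main bookkeeping engine of the proof.

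The key calculation is the identity
\[\mu^{\adj}(F) = \sum_i \mu_i^{\adj}(F_i) - \frac{1}{s} \sum_{p \text{ crossing}} \codim_F\bigl(F_{|p_1} \cap F_{|p_2}\bigr),\]
where $\mu_i^{\adj}(F_i)$ denotes the adjusted slope of $F_i$ as a subbundle of $\nu_i^*(E_{|C_i})$ on the nodal curve $C_i$ (accounting only for the internal nodes of $C_i$). This holds because $\deg(F) = \sum_i \deg(F_i)$ and because each internal node of $C_i$ has both its preimages inside $\tilde{C_i}$, so its codimension contribution is already absorbed into $\mu_i^{\adj}(F_i)$. Applying the hypothesis that each $E_{|C_i}$ is semistable gives $\mu_i^{\adj}(F_i) \leq \mu(E_{|C_i})$. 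Summing over $i$ and using that slopes are additive in the component decomposition (as $\rk(E) = r$ appears in every denominator), one has $\sum_i \mu(E_{|C_i}) = \deg(E)/r = \mu(E)$. Together with the nonnegativity of the crossing-node codimensions, this yields $\mu^{\adj}(F) \leq \mu(E)$, which is semistability.

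For stability, a proper subbundle $F \subsetneq \nu^* E$ of uniform rank must have $s < \rk(E)$, since otherwise $F_i = \nu_i^*(E_{|C_i})$ on every component would force $F = \nu^* E$; hence each $F_i$ is a proper subbundle of $\nu_i^*(E_{|C_i})$. If some $E_{|C_j}$ is stable, then $\mu_j^{\adj}(F_j) < \mu(E_{|C_j})$, and this strict inequality propagates through the summation above to give $\mu^{\adj}(F) < \mu(E)$. The principal obstacle is simply verifying the slope-decomposition identity above with care, especially the claim that the contribution of every internal node is accounted for by the intrinsic adjusted slope on its component; once that identity is in hand, everything reduces to applying the component-wise hypotheses and invoking the nonnegativity of codimensions.
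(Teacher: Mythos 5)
Your proof is correct: the paper gives no argument for this lemma (it simply cites Lemma 4.1 of \cite{clv}), and your identity expressing $\mu^\adj(F)$ as the sum of the intrinsic adjusted slopes $\mu_i^\adj(F_i)$ on the components minus the nonnegative crossing-node corrections, combined with $\sum_i \mu(E_{|C_i}) = \mu(E)$, is precisely the computation underlying the cited result. The only (harmless) omission is the remark that a nonzero proper subbundle of uniform rank has $1 \leq s \leq \rk(E) - 1$, so that each $F_i$ is indeed a nonzero proper subbundle to which the (semi)stability of $E_{|C_i}$ applies.
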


Combined with \cite[Proposition 2.3]{clv}, we conclude that \(\E^*\) is (semi)stable
in Lemma~\ref{lm:components-to-special}. In positive characteristic, we have a similar result for strong semistability:

\begin{cor} \label{lm:components-to-special-strong}
($\mathrm{char}(k)>0$) If the restrictions of \(E\) to every component
of \(C\) are strongly semistable, then \(E\) is strongly semistable.
If in addition the restriction to one such component is strongly stable,
then \(E\) is strongly stable.
\end{cor}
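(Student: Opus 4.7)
The plan is to reduce the corollary to Lemma \ref{lm:components-to-special} applied separately to each Frobenius pullback $(F^e)^* E$. The core observation is that the absolute Frobenius $F \colon C \to C$ is compatible with the inclusion of each irreducible component: if $\iota_i \colon C_i \hookrightarrow C$ denotes the inclusion of a component and $F_i$ the absolute Frobenius on $C_i$, then $F \circ \iota_i = \iota_i \circ F_i$ (both are the identity on the topological space and the $p$-th power on structure sheaves). Consequently
\[
\bigl((F^e)^* E\bigr)\bigm|_{C_i} \;=\; \iota_i^* (F^e)^* E \;=\; (F_i^e)^* \bigl(E|_{C_i}\bigr)
\]
for every component $C_i$ and every $e \geq 0$.

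Assuming $E|_{C_i}$ is strongly semistable for every $i$, the identity above shows that each restriction $\bigl((F^e)^* E\bigr)|_{C_i} = (F_i^e)^*(E|_{C_i})$ is semistable on $C_i$. Applying Lemma \ref{lm:components-to-special} to the vector bundle $(F^e)^* E$ on the nodal curve $C$, we conclude that $(F^e)^* E$ is itself semistable. Since this holds for every $e \geq 0$, the bundle $E$ is strongly semistable by definition.

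For the strong stability statement, assume in addition that $E|_{C_j}$ is strongly stable for some component $C_j$. Then $(F_j^e)^*(E|_{C_j})$ is stable for every $e \geq 0$, while all other restrictions remain semistable. The second assertion of Lemma \ref{lm:components-to-special} then gives that $(F^e)^* E$ is stable on $C$ for every $e$, so $E$ is strongly stable.

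The argument is essentially mechanical once the compatibility $\iota_i^* \circ (F^e)^* = (F_i^e)^* \circ \iota_i^*$ is in hand; there is no genuine obstacle, since we use Lemma \ref{lm:components-to-special} as a black box and never need to re-examine the adjusted slope $\mu^\adj$ directly. The only point requiring a moment's care is that the definition of (semi)stability in Definition \ref{def:st1} uses subbundles of $\nu^* E$, but this is subsumed by the cited lemma.
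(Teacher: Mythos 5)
Your argument is correct and is exactly the intended one: the paper states this as an immediate corollary of Lemma~\ref{lm:components-to-special} without writing out a proof, the point being precisely the naturality of the absolute Frobenius ($F \circ \iota_i = \iota_i \circ F_i$), which lets one apply that lemma to each $(F^e)^* E$. Nothing further is needed.
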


We now consider a variant of this setup, in the simplest
case where
one component of \(C\) fails to be semistable. In this case, we will relate
the semistability of \(\E^*\) to the semistability of a \emph{modification}
of the restriction of \(E\) to one component, a notion which we now define:

\begin{defi} Let $E$ be a vector bundle on a variety $X$ and \(D \subseteq X\) be a Cartier divisor.
Let \(F\) be a subbundle of the restriction of \(E\) to some subscheme of $X$ containing \(D\).
We define the elementary modification
\[E[D \to F] \colonequals \ker  \Bigl\{E \to \frac{E_{|D}}{F_{|D}}\Bigr\}.\]
\end{defi}

\begin{lm} \label{lm:stab-mod}
Suppose that \(C = Y \cup_p \PP^1\) is a transverse union of a curve $Y$ and $\PP^1$ respectively, meeting at a point $p$, and \(E_{|\PP^1} \cong \OO_{\PP^1}(a - 1)^{\rk E - 1} \oplus \O_{\PP^1}(a)\).
Then \(\E^*\) is (semi)stable provided that the modification
\[E_{|Y}\bigl[p \to \O(a)_{|p}\bigr]\]
is (semi)stable.
\end{lm}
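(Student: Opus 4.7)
The plan is to reduce, via Lemma~\ref{lm:stab-open}, to showing that $E$ itself is (semi)stable in the nodal sense of Definition~\ref{def:st1}. Accordingly, I take an arbitrary proper subbundle $F \subseteq \nu^* E$ of uniform rank $s$, with restrictions $F_Y \subseteq E_{|Y}$ and $F_{\PP^1} \subseteq E_{|\PP^1}$ meeting along the fibre $E_{|p}$, and the goal is to bound the adjusted slope $\mu^\adj(F)$ by $\mu(E) = \mu(E_{|Y}) + a - 1 + 1/r$.

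I bound the two components separately. On $\PP^1$, the Grothendieck decomposition $E_{|\PP^1} \cong \O(a-1)^{r-1} \oplus \O(a)$ together with $\Hom(\O(a), \O(a-1)) = 0$ shows that at most one summand of $F_{\PP^1} = \bigoplus \O(c_i)$ can equal $\O(a)$, hence $\deg F_{\PP^1} \leq s(a-1) + q'$, where $q' \in \{0, 1\}$ detects whether $\O(a)$ appears as a direct summand of $F_{\PP^1}$. On $Y$, I perform the matching elementary modification $F'_Y \colonequals \ker\bigl(F_Y \to F_{Y|p}/(F_{Y|p} \cap \O(a)_{|p})\bigr)$, which is naturally a subsheaf of $E_{|Y}[p \to \O(a)_{|p}]$ of rank $s$. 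Setting $q \colonequals \dim\bigl(F_{Y|p} \cap \O(a)_{|p}\bigr) \in \{0, 1\}$, the identity $\deg F_Y = \deg F'_Y + s - q$, together with the hypothesized (semi)stability of $E_{|Y}[p \to \O(a)_{|p}]$ applied to the saturation of $F'_Y$, yields $\deg F_Y \leq s\,\mu(E_{|Y}) + s/r - q$.

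Substituting these bounds into $\mu^\adj(F) = (\deg F_Y + \deg F_{\PP^1} - (s - u))/s$, where $u \colonequals \dim(F_{Y|p} \cap F_{\PP^1|p})$, the desired inequality reduces to the purely combinatorial bound $u \leq s + q - q'$. Three of the four possibilities for $(q, q')$ follow at once from $u \leq s$. The delicate case is $(q, q') = (0, 1)$, where I need $u \leq s - 1$: here $F_{\PP^1|p}$ contains the line $\O(a)_{|p}$ while $F_{Y|p}$ does not, so the two $s$-dimensional subspaces of $E_{|p}$ are distinct and therefore meet in dimension at most $s - 1$. For the stability statement, properness of $F$ forces $s < r$, in which case the saturation of $F'_Y$ is a proper subbundle of $E_{|Y}[p \to \O(a)_{|p}]$ and strict stability of the modification propagates to $\mu^\adj(F) < \mu(E)$. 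The main technical point will be carefully tracking the degree shift and the saturation across the elementary modification at $p$; the conceptual heart of the proof is the dimension count at the singular fibre that rules out the borderline case $(q, q') = (0, 1)$.
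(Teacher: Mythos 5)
Your argument is correct, and it is exactly the first of the two routes the paper's proof alludes to (``show that \(E\) is semistable and then apply Lemma~\ref{lm:stab-open}'') but then declines to carry out. The paper instead uses a slicker device: replace \(\E\) on the total space by the elementary modification \(\E' = \E[\PP^1 \to \O_{\PP^1}(a)]\), which leaves the general fibre unchanged while turning the special fibre into \(E_{|Y}[p \to \O(a)_{|p}] \cup \O_{\PP^1}(a)^{\rk E}\); both components are now (semi)stable, so Lemmas~\ref{lm:components-to-special} and~\ref{lm:stab-open} finish immediately. Your casework buys a slightly stronger intermediate fact --- the special fibre \(E\) itself is (semi)stable in the sense of Definition~\ref{def:st1}, not merely the general fibre --- and the borderline case you isolate, where \(F_{|\PP^1}\) contains the \(\O(a)\) summand while \(F_{|Y}\) misses \(\O(a)_{|p}\) (so that the intersection at the node has dimension at most \(s-1\)), is precisely the tension that the modification trick packages away. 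Two points to tighten. First, the lemma carries no smoothness hypothesis on \(Y\), and the paper applies it with \(Y\) nodal; your formula for \(\mu^{\adj}(F)\) must therefore include the codimension corrections at the nodes of \(Y\) as well. This is harmless --- \(F_Y\) and \(F'_Y\) agree away from \(p\), so those corrections pass unchanged through the modification and the argument runs verbatim with adjusted degrees on \(Y\) --- but it should be said. Second, your case \((q,q')=(0,1)\) uses that an \(\O(a)\) direct summand of \(F_{|\PP^1}\) has fibre at \(p\) equal to \(\O(a)_{|p}\); this is because \(\Hom(\O(a),\O(a-1))=0\) forces any such summand to coincide, as a subsheaf of \(E_{|\PP^1}\), with the canonical \(\O(a)\) factor, and that half-sentence of justification is worth including since it is the hinge of the whole count.
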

\begin{proof}
One approach, which is straightforward but requires some casework, is to
show that \(E\) is semistable and then apply Lemma~\ref{lm:stab-open}. Alternatively, we  can replace \(\E\) with the
modification \(\E':=\E[\PP^1 \to \O_{\PP^1}(a)]\),
which has the same general fiber. Then
\(\E'_{|D} \cong E_{|D}[p \to \O(a)_{|p}]\),
whereas \(\E'_{|\PP^1} \cong \O_{\PP^1}(a)^{\rk E}\) is semistable.
The desired result follows by combining
Lemmas~\ref{lm:components-to-special} and~\ref{lm:stab-open}.
\end{proof}

\subsection{General vector bundles on rational and elliptic curves}

We shall find it useful to have a concept of a \emph{general} vector bundle on a rational or elliptic curve that takes into account the Birkhoff--Grothendieck and Atiyah classifications for vector bundles on rational and elliptic curves respectively.

\begin{defi}\label{def:gen_vb} (i) We fix positive integers $r$ and $d$ and write $d=ra+b$, with $0\leq b\leq r-1$. A vector bundle $E$ of rank $r$ and degree $d$ on $\PP^1$ is said to be general if $E\cong \OO_{\PP^1}(a)^{r-b}\oplus \OO_{\PP^1}(a+1)^b$.

(ii) We fix a smooth elliptic curve $J$ and positive integers $r$ and $d$, then write $r=ar_1$ and $d=ad_1$, where $a=\mbox{gcd}(r,d)$. A vector bundle $E$ of rank $r$ and degree $d$ on $J$ is said to be general if $E\cong E_1\oplus \cdots\oplus E_a$, where each $E_i$ is a stable vector vector bundle of rank $r_1$ and degree $d_1$ on $J$ such that $\bigl(\mbox{det}(E_1), \ldots, \mbox{det}(E_a)\bigr)$ is a general element of
$\mbox{Pic}^{d_1}(J)\times \cdots \times \mbox{Pic}^{d_1}(J)$.
\end{defi}

For a smooth curve of genus $g\geq 2$ a vector bundle $E$ is said to be general if it corresponds to a general point of the moduli space of stable vector bundles on $C$ of that rank and degree. With this definition in place, keeping the notation above we have the following:

\begin{lm} \label{lm:general} Suppose that \(C = Y \cup_{p_1} \PP^1 \cup_{p_2} \PP^1\cup  \cdots \cup_{p_n} \PP^1\)
is the transverse union of a curve \(Y\) with \(n\) copies of \(\PP^1\), each meeting \(Y\) at a single point \(p_i \in Y\),
with the \(p_i\in \PP^1\) being mutually distinct.
Assume that the restriction of \(E\) to each \(\PP^1\) satisfies
\[E_{|\PP^1} \cong \O_{\PP^1}(a)^{\rk(E)} \ \text{(respectively} \ E_{|\PP^1} \cong \O_{\PP^1}(a - 1)^{\rk(E) - 1} \oplus \O_{\PP^1}(a)\text{)}.\]
If \(E_{|Y}(ap_1 + \cdots + ap_n)\) (respectively \(E_{|Y}\bigl[p_1 \to \O(a)_{|p_1}\bigr]\cdots\bigl[p_n \to \O(a)_{|p_n}\bigr](ap_1 + \cdots + ap_n)\)) is a general vector bundle on $Y$,
then so is \(\E^*\).
\end{lm}
\begin{proof}

As in the proof of Lemma~\ref{lm:stab-mod}, we can reduce the case \(E_{|\PP^1} \cong \O_{\PP^1}(a - 1)^{\rk(E) - 1} \oplus \O_{\PP^1}(a)\)
to the case \(E_{|\PP^1} \cong \O_{\PP^1}(a)^{\rk(E)}\) by replacing \(\E\) with the elementary modification along the divisor
given by the union of all the \(\PP^1\) factors: \(\E[\PP^1 \cup \cdots \cup \PP^1 \to \O(a)]\).
Similarly, we can further reduce to the case \(a = 0\) by replacing \(\E\) with \(\E\otimes \OO_{\mathcal{C}}(a (\PP^1 \cup \cdots \cup \PP^1))\), where with the notation introduced in (\ref{subsec:ray}), we observe that the union of all the $\PP^1$ factors may be viewed as a Cartier divisor on $\C$.
It therefore suffices to consider the case when the restrictions \(E_{|\PP^1} \cong \O_{\PP^1}^{\rk(E)}\) are all trivial.

In this case, \(\E\) is the pullback of a vector bundle $\bar{\E}$ from the surface $\bar{\C}$ obtained from \(\C\) by contracting all of the \(\PP^1\)s,
 which are \((-1)\)-curves. As the central fiber of \(\bar{\E}\) is
\(E_{|Y}\), the result is immediate.
\end{proof}

\subsection{Elementary modification of kernel bundles}

For an embedded curve
we will be concerned primarily with modifications of the restricted tangent bundle
along certain subbundles  which we define as follows:

\begin{defi}
Let $C$ be a nodal curve and $f\colon C \to \PP^r$ a morphism. Set \(\Lambda \subseteq \PP^r\) to be a linear space
not containing the image of any component of \(C\) under \(f\), and with
\(\Lambda \cap f(C_\sing) = \emptyset\). Let
\[T_{f \to \Lambda|C \setminus f^{-1}(\Lambda)} \colonequals f^* T_{\pi_\Lambda} \subseteq f^* T_{\PP^r},\]
where \(\pi_\Lambda\) denotes the map of projection with center \(\Lambda\).
We then define the \emph{pointing bundle} \(T_{f \to \Lambda}\) to be
the unique extension of $T_{f \to \Lambda |C \setminus f^{-1}(\Lambda)}$
to a subbundle of \(f^* T_{\PP^r}\).

For any subscheme \(X \subseteq \PP^r\), we define the following elementary transformation of the restricted tangent bundle
\[f^* T_{\PP^r} [D \to X] \colonequals f^* T_{\PP^r} [D \to T_{f \to \langle X \rangle}],\]
where \(\langle X \rangle \subseteq \PP^r \) denotes the linear span of \(X\).
\end{defi}

The pointing bundles $T_{f\to \Lambda}$ have a transparent interpretation in terms of kernel bundles via a simple secant construction which we now explain. Set $L:=f^*\OO_{\PP^r}(1)$ and $V:=f^*H^0(\PP^r, \OO_{\PP^r}(1))$. Set $D:=f^{-1}(\Lambda)$ viewed as an effective divisor on $C$. Assuming $\mbox{dim}(\Lambda)=a$, write $V'\subseteq V(-D)$ for the $(r-a)$-dimensional subspace such that one has the canonical identification

$$\Lambda\cong \PP\bigl(V/V'\bigr)\cong \PP^{a}.$$

Then the kernel bundle $M_{V'}$ is naturally a subbundle of $M_V$ and the quotient can be identified up to a twist by $L$ with the pointing bundle $T_{f\to \Lambda}$. Precisely, one has the  exact sequence on $C$:

\begin{equation}\label{eq:pointing_syz}
0\longrightarrow T_{f\to \Lambda}\longrightarrow M_V^{\vee}\otimes L \longrightarrow M_{V'}^{\vee}\otimes L\longrightarrow 0.
\end{equation}
Note that $\mbox{rk}(T_{f\to \Lambda})=a+1=\mbox{dim}(\Lambda)+1$. From the sequence (\ref{eq:pointing_syz}), we derive the following:

\begin{lm} Keeping the previous notation, we have $\mathrm{det}(T_{f\to \Lambda})\cong L^{\mathrm{dim}(\Lambda)+1}\bigl(f^{-1}(\Lambda)\bigr)$.
\end{lm}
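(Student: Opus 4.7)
The plan is to extract $\det T_{f \to \Lambda}$ from the exact sequence (\ref{eq:pointing_syz}) by computing the determinants of $M_V$ and $M_{V'}$ separately from their defining evaluation sequences, and then reading off the determinant of $T_{f \to \Lambda}$ from the resulting identity of determinant line bundles.

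First I would observe that the defining sequence $0 \to M_V \to V \otimes \O_C \to L \to 0$ immediately gives $\det M_V = L^{-1}$, so since $\rk M_V = r$, we have $\det(M_V^\vee \otimes L) = \det(M_V)^{-1} \otimes L^{r} = L^{r+1}$. For the analogous computation for $M_{V'}$, the key observation will be that $V' \subseteq V$ consists of sections vanishing on $D := f^{-1}(\Lambda)$, so the evaluation $V' \otimes \O_C \to L$ factors through the subsheaf $L(-D) \hookrightarrow L$. The induced map $V' \otimes \O_C \to L(-D)$ is surjective because projection with center $\Lambda$ extends to an honest morphism $C \to \PP^{r - \dim \Lambda - 1}$ whose $\O(1)$ pulls back precisely to $L(-D)$; this yields $0 \to M_{V'} \to V' \otimes \O_C \to L(-D) \to 0$, so $\det M_{V'} = L^{-1}(D)$, and since $\rk M_{V'} = r - \dim \Lambda - 1$, we get $\det(M_{V'}^\vee \otimes L) = L(-D) \otimes L^{r - \dim \Lambda - 1} = L^{r - \dim \Lambda}(-D)$. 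Feeding these into (\ref{eq:pointing_syz}) produces
$$\det T_{f \to \Lambda} = \det(M_V^\vee \otimes L) \otimes \det(M_{V'}^\vee \otimes L)^{-1} = L^{r+1} \otimes L^{-(r - \dim \Lambda)}(D) = L^{\dim \Lambda + 1}\bigl(f^{-1}(\Lambda)\bigr),$$
which is the desired identity.

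The one substantive point will be justifying the surjectivity of $V' \otimes \O_C \to L(-D)$, equivalently the base-point freeness of $|V'|$ as a linear series cutting out $L(-D)$. This should however be immediate from the standing hypotheses that $\Lambda \cap f(C_{\sing}) = \emptyset$ and that no component of $f(C)$ lies in $\Lambda$, which together guarantee that $\pi_\Lambda \circ f$ extends to an honest morphism on all of $C$; the pullback linear series then has no base points and realizes $L(-D)$ as claimed. Once this is granted, the remainder of the argument is routine rank-and-degree bookkeeping, and I do not anticipate any serious obstacle.
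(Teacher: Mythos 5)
Your proposal is correct and is precisely the argument the paper intends: the lemma is stated as an immediate consequence of taking determinants in the exact sequence (\ref{eq:pointing_syz}), and your computation of $\det M_V \cong L^{-1}$ and $\det M_{V'} \cong L^{-1}(D)$ from the two evaluation sequences (the latter surjecting onto $L(-D)$ because the projection from $\Lambda$ extends to a morphism on $C$) is exactly the bookkeeping being left to the reader. The justification of surjectivity via the hypotheses that $\Lambda$ contains no component of $f(C)$ and misses $f(C_{\sing})$ is the right one.
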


Similarly, if $X\subseteq \PP^r$ and $\mbox{dim} \langle X\rangle =a$, let $V'\subseteq V$ be the $(r-a)$-dimensional subspace of hyperplanes containing $X$. We then have the following exact sequence involving kernel bundles:

\begin{equation}\label{eq:elem_mod}
0\longrightarrow f^*T_{\PP^r}[D\to X]\longrightarrow M_V^{\vee}\otimes L\longrightarrow M_{V'}^{\vee}\otimes L_{|D}\longrightarrow 0.
\end{equation}

\section{Stability in exact sequences on nodal curves}
The basic strategy of our proof of Theorem \ref{thm:main} will be to degenerate
\(C\) in projective space so that the kernel bundle $M_V$ fits into an exact sequence
with sub and quotient bundles whose slopes are sufficiently close.
In this section, we study this setup in greater generality.
Let
\begin{equation} \label{seq}
0 \longrightarrow S \longrightarrow E \longrightarrow Q \longrightarrow 0
\end{equation}
be a short exact sequence of vector bundles
on a nodal curve \(C\) of genus \(g\). Our goal is to relate semistability
(respectively the weak Raynaud condition) for \(E\), and for \(S\) and \(Q\).

\begin{lm} \label{lm:wr}
Suppose both vector bundles \(S\) and \(Q\) satisfy the weak Raynaud condition, and
\begin{equation} \label{close-slope}
\lceil \mu(S) \rceil \leq \lfloor \mu(Q) \rfloor + 1 \andlarge \lceil \mu(Q) \rceil \leq \lfloor \mu(S) \rfloor + 1.
\end{equation}
Then \(E\) also satisfies the weak Raynaud condition.
\end{lm}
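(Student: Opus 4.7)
The plan is to fix an integer $d$ and produce, as a function of the size of $d$ relative to the slopes, a line bundle $\eta \in \Pic^d(C)$ witnessing one of the vanishings in \eqref{h0-or-h1} for $E$. First I would swap the roles of $S$ and $Q$ if needed so that $\mu(S) \le \mu(Q)$; the hypothesis \eqref{close-slope} then reduces to the single inequality $\lceil \mu(Q) \rceil \le \lfloor \mu(S) \rfloor + 1$. Since $\chi(S \otimes \eta)$ and $\chi(Q \otimes \eta)$ depend on $\eta$ only through $\deg(\eta) = d$, this inequality yields the following dichotomy: every integer $d$ satisfies either $d \le \lfloor g - 1 - \mu(Q) \rfloor$ (the \emph{low regime}, where $\chi(S \otimes \eta) \le 0$ and $\chi(Q \otimes \eta) \le 0$) or $d \ge \lceil g - 1 - \mu(S) \rceil$ (the \emph{high regime}, where both Euler characteristics are $\ge 0$). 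The gap between these two half-lines is empty precisely under the assumed slope inequality, which is what makes \eqref{close-slope} the natural hypothesis.

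Next I would combine the two weak Raynaud witnesses via the long exact sequence attached to \eqref{seq} tensored by $\eta$. In the low regime, weak Raynaud for $S$ and for $Q$ supplies line bundles $\eta_S, \eta_Q \in \Pic^d(C)$ with $H^0(S \otimes \eta_S) = H^0(Q \otimes \eta_Q) = 0$: the sign of $\chi$ rules out the alternative $H^1$-vanishing from being the only realization. By upper semicontinuity of $h^0$, each vanishing defines a Zariski-open, hence Zariski-dense, locus in the irreducible component of $\Pic^d(C)$ containing its witness. When $C$ is irreducible, $\Pic^d(C)$ is itself irreducible, the two open loci intersect, and any $\eta$ in their intersection satisfies $H^0(S \otimes \eta) = H^0(Q \otimes \eta) = 0$; the connecting long exact sequence then forces $H^0(E \otimes \eta) = 0$. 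The high regime is treated symmetrically with $H^1$-vanishing, producing $\eta$ with $H^1(E \otimes \eta) = 0$.

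The step I expect to be the main obstacle is the case of a reducible nodal curve $C$, in which $\Pic^d(C)$ decomposes according to multi-degree and the witnesses $\eta_S, \eta_Q$ supplied by weak Raynaud may a priori live in different components. To handle this I would exploit the freedom to redistribute the total degree $d$ among the components of $C$, in the spirit of Lemma \ref{lm:ray-open}, arranging a common multi-degree on which both weak Raynaud conditions can be witnessed; once this is accomplished, the semicontinuity argument above applies verbatim and the long exact sequence delivers the required vanishing for $E$.
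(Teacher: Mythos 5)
Your argument is correct and essentially identical to the paper's: both rest on the observation that \eqref{close-slope} forbids any integer from lying strictly between \(g-1-\mu(S)\) and \(g-1-\mu(Q)\), so for any fixed degree \(d\) the sign of the Euler characteristic forces the weak Raynaud witnesses for \(S\) and \(Q\) to realize the \emph{same} type of vanishing (\(H^0\) or \(H^1\)), which is then transferred to \(E\) via the long exact sequence. The reducible-curve subtlety you flag (witnesses possibly lying in different multidegree components of \(\Pic^d(C)\)) is a fair point but is equally elided in the paper's own proof, which just takes \(\eta\) ``suitably general''; in the paper's only application of this lemma (Lemma \ref{lm:project}) the curve is irreducible, so the semicontinuity argument you describe suffices.
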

\begin{proof}
Let \(\eta\) be a suitably general line bundle of degree \(d\) on $C$.
Since \(S\) satisfies the weak Raynaud condition,
\(H^0(C, S \otimes \eta) = 0\) if \(d \leq g - 1 - \mu(S)\),
and \(H^1(C, S \otimes \eta) = 0\) if \(d \geq g - 1 - \mu(S)\).
The analogous statement holds for \(Q\).

Since \(d\in \mathbb Z\) and no integer
lies strictly between \(g - 1 - \mu(S)\) and \(g - 1 - \mu(Q)\)
by our assumption \eqref{close-slope},
it follows that \(H^0(C, S \otimes \eta) = H^0(C, Q \otimes \eta) = 0\)
or \(H^1(C, S \otimes \eta) = H^1(C, Q \otimes \eta) = 0\).
Tensoring \eqref{seq} with \(\eta\), we obtain
that either \(H^0(C, E \otimes \eta) = 0\) or \(H^1(C, E \otimes \eta) = 0\).
In other words, \(E\) satisfies the weak Raynaud condition as claimed.
\end{proof}

\begin{lm} \label{ss-from-exact}
Suppose both \(S\) and \(Q\) are semistable, the degree and rank of \(Q\)
are coprime, and
\[\mu(Q) = \min \bigl\{m \in \QQ : m > \mu(E) \andsmall \denominator(m) < \rk(E)\bigr\}.\]
If the sequence \eqref{seq} does not split, then \(E\) is semistable. Otherwise,
\eqref{seq} splits uniquely and \(S\) is the unique destabilizing quotient of \(E\).
\end{lm}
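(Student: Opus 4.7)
The plan is to show that any saturated subbundle $F \subseteq \nu^*E$ of uniform rank $s$ with $\mu^{\adj}(F) > \mu(E)$ must force a splitting of the exact sequence. First, I would observe that $s\cdot \mu^{\adj}(F) = \deg F - \sum_{p\in C_\sing} \codim_F(F|_{p_1}\cap F|_{p_2})$ is an integer, so $\denominator(\mu^{\adj}(F))$ divides $s$. Since $F = \nu^*E$ would give $\mu^{\adj}(F) = \mu(E)$, we have $s < \rk(E)$, and by the minimality hypothesis on $\mu(Q)$, this forces $\mu^{\adj}(F) \geq \mu(Q)$.

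The core of the argument is to exploit the semistability of $S$ and $Q$. Set $F_S := F \cap \nu^*S$, a saturated rank-$s_S$ subbundle of $\nu^*E$ (being the kernel of the induced map $F \to \nu^*Q$), and let $\overline{\pi(F)} \subseteq \nu^*Q$ denote the saturation of the image of $F$, a subbundle of rank $s_Q := s - s_S$. Semistability supplies $\mu^{\adj}(F_S) \leq \mu(S)$ and $\mu^{\adj}(\overline{\pi(F)}) \leq \mu(Q)$. The main technical ingredient---which I expect to be the principal obstacle---is the comparison inequality
\[ s \cdot \mu^{\adj}(F) \leq s_S \cdot \mu^{\adj}(F_S) + s_Q \cdot \mu^{\adj}(\overline{\pi(F)}).\]
Using $\deg F = \deg F_S + \deg \overline{\pi(F)} - \mathrm{ord}$, where $\mathrm{ord} := \deg \overline{\pi(F)} - \deg \pi(F) \geq 0$ is the torsion length of the saturation, this reduces to showing $\text{adj}(F) + \mathrm{ord} \geq \text{adj}(F_S) + \text{adj}(\overline{\pi(F)})$. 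I would prove this node-by-node: writing $W_i, A_i, B_i$ for the fibers of $F, F_S, \overline{\pi(F)}$ at $p_i$, and $\delta_i := s_Q - \dim W_i^Q$ for the drop in dimension of the image $W_i^Q$ of $W_i$ in $Q|_p$, Smith normal form at $p_i$ shows $\delta_i$ is bounded above by the local torsion length at $p_i$; the desired node-local inequality then amounts to
\[ \dim(W_1 \cap W_2) \leq \dim(A_1 \cap A_2) + \dim(B_1 \cap B_2) + \delta_1 + \delta_2.\]
This follows by combining $\dim(W_1 \cap W_2) \leq \dim((W_1 \cap W_2) \cap V_S) + \dim(B_1 \cap B_2)$ (projecting to $Q|_p$ and using $W_i^Q \subseteq B_i$) with the bound $\dim((W_1 \cap V_S) \cap (W_2 \cap V_S)) \leq \dim(A_1 \cap A_2) + \delta_1 + \delta_2$, obtained from the map $v \mapsto (\bar v, \bar v)$ into $(W_1 \cap V_S)/A_1 \oplus (W_2 \cap V_S)/A_2$ once one checks that $\dim(W_i \cap V_S)/A_i = \delta_i$. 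Given the key inequality, if $s_S > 0$ then $\mu(S) < \mu(Q)$ yields $\mu^{\adj}(F) < \mu(Q)$, contradicting $\mu^{\adj}(F) \geq \mu(Q)$. Hence $F_S = 0$, the map $F \hookrightarrow \nu^*Q$ is injective, and semistability of $Q$ combined with coprimality of $\deg Q$ and $\rk Q$ (so $\denominator(\mu(Q)) = \rk Q$) forces $s = \rk Q$, $\mu^{\adj}(F) = \mu(Q)$, and then $F = \nu^*Q$ inside $\nu^*E$. Finally, $\mu^{\adj}(F) = \mu(F) = \mu(Q)$ forces every nodal adjustment of $F$ to vanish, so $F|_{p_1} = F|_{p_2}$ at each node and $F$ descends to a subbundle of $E$ on $C$ splitting \eqref{seq}.

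The first assertion is the contrapositive of this. For the second, if \eqref{seq} splits then the image of any splitting $Q \hookrightarrow E$ is a destabilizing subbundle of $E$ of slope $\mu(Q) > \mu(E)$, so $S$ is the corresponding destabilizing quotient. Any two splittings differ by an element of $\Hom(Q,S)$, which vanishes because $Q,S$ are semistable with $\mu(Q) > \mu(S)$; this gives uniqueness of the splitting, and the main argument above shows every destabilizing subbundle of $\nu^*E$ equals $\nu^*Q$ for such a splitting, whence $S$ is the unique destabilizing quotient of $E$.
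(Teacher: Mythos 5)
Your proof is correct and follows the same route as the paper: decompose a destabilizing subbundle $F$ via the kernel and (saturated) image of the induced map to $Q$, use semistability of $S$ and $Q$ together with the minimality of $\mu(Q)$ and the integrality of $s\cdot\mu^{\adj}(F)$ to force $F\cong Q$, and conclude that the only way $E$ can fail to be semistable is for \eqref{seq} to split, with uniqueness from $\Hom(Q,S)=0$. The only difference is that you carefully verify the node-by-node superadditivity of the adjusted slope under the kernel--image decomposition, a point the paper's proof passes over silently by arguing with ordinary slopes.
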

\begin{proof}
Suppose for contradiction, \(F \subseteq E\) is a destabilizing subbundle. Write \(I\) and \(K\) for image and kernel of the
induced map \(F \to Q\).
Since \(Q\) is semistable by assumption, and its degree and rank are coprime, it is stable;
in particular, if \(I \neq 0\), then \(\mu(I) \leq \mu(Q)\) with equality only if \(I = Q\).
Similarly, if \(K \neq 0\), then \(\mu(K) \leq \mu(S) < \mu(Q)\), because $\mu(Q)>\mu(E)$ by assumption.

\vskip 3pt

Since \(\mu(F) \geq \mu(Q)\) by construction, this is only possible if \(I = Q\)
and \(K = 0\). In particular, \(F \cong Q\) is the unique destabilizing
subbundle, and the inclusion
\(F \subseteq E\) gives a splitting.
Such a splitting is unique because \(\Hom(Q, S) = 0\).
\end{proof}

The following pair of results will be used several times when establishing inductively the (semi)stability of kernel bundles under degeneration
in projective space.

\begin{lm} \label{rat-base}
Let $\{\E_b\}_{b\in B}$ be a family of vector bundles on a nodal curve \(C\),
parameterized by a rational variety \(B\).
Assume for two points \(b_1, b_2 \in B\),
the restrictions \(\E_{b_i}\) fit into exact sequences
\[0 \longrightarrow S_i \longrightarrow \E_{b_i} \longrightarrow Q_i \longrightarrow 0\]
satisfying the assumptions of Lemma~\ref{ss-from-exact}. If \(\mathrm{det}(Q_1) \ncong \mathrm{det}(Q_2)\),
then \(\E_b\) is semistable for a general \(b \in B\).
\end{lm}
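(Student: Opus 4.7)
I will argue by contradiction: if $\E_b$ were non-semistable for a general $b \in B$, we would produce a non-constant morphism from a rational variety to a torsor under the semi-abelian variety $\Pic^0(C)$, which is impossible. By the standard openness of semistability in families (an extension of Lemma~\ref{lm:stab-open} to arbitrary bases, via properness of Quot schemes), it suffices to exhibit a single $b \in B$ with $\E_b$ semistable. Suppose for contradiction that no such $b$ exists.

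Over the generic point $\eta \in B$, the vector bundle $\E_\eta$ is non-semistable, so it admits a unique maximal destabilizing subbundle $F_\eta \subset \E_\eta$, which is itself semistable. Since $\mu(F_\eta) > \mu(E)$ has denominator dividing $\rk F_\eta < \rk E$, the minimality assumption on $\mu(Q)$ from Lemma~\ref{ss-from-exact} forces $\mu(F_\eta) = \mu(Q)$; the coprimality of $(\rk Q, \deg Q)$ then yields $\rk F_\eta = k \, \rk Q$ and $\deg F_\eta = k \deg Q$ for some $k \geq 1$.

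Let $Z_0$ be the closure in the relative Quot scheme $\mathrm{Quot}_{\E/B}$ of the point $[\E_\eta \twoheadrightarrow \E_\eta / F_\eta]$. Semistability of $F_\eta$ together with the fact that all HN slopes of $\E_\eta/F_\eta$ are strictly less than $\mu(Q)$ yields $\Hom(F_\eta, \E_\eta/F_\eta) = 0$, so $[\E_\eta/F_\eta]$ is a reduced isolated point of $\mathrm{Quot}(\E_\eta)$ and the morphism $Z_0 \to B$ is birational; being projective, it is also surjective, and $Z_0$ is rational because $B$ is. The determinant of the universal kernel on $Z_0 \times C$ defines a morphism $Z_0 \to \Pic^{k \deg Q}(C)$. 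Since $\Pic^0(C)$ for the nodal curve $C$ is a semi-abelian variety---an extension of a product of Jacobians of the normalization components by a torus encoding the dual graph---and since semi-abelian varieties receive no non-constant morphism from a rational variety, this determinant morphism is constant.

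The main obstacle is the analysis at the specializations to $b_1$ and $b_2$. Pick $z_i \in Z_0$ lying over $b_i$ (which exists by surjectivity); it corresponds to a torsion-free subsheaf $\F_{z_i} \subseteq \E_{b_i} = S_i \oplus Q_i$ of rank $k \, \rk Q$ and degree $k \deg Q$. Setting $F' \colonequals \F_{z_i} \cap Q_i$ and $F'' \colonequals \F_{z_i}/F' \hookrightarrow S_i$, stability of $Q_i$ gives $\deg F' \leq \mu(Q)\, \rk F'$, while semistability of $S_i$ gives $\deg F'' \leq \mu(S)\, \rk F''$; summing and comparing with $\deg F' + \deg F'' = \mu(Q)(\rk F' + \rk F'')$ together with $\mu(S) < \mu(Q)$ forces $\rk F'' = 0$, hence $F'' = 0$. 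Thus $\F_{z_i} \subseteq Q_i$ has rank $k \, \rk Q \leq \rk Q$, so $k = 1$, and stability of $Q_i$ then yields $\F_{z_i} = Q_i$. Constancy of the determinant morphism now gives $\det Q_1 = \det \F_{z_1} = \det \F_{z_2} = \det Q_2$, contradicting the hypothesis $\det(Q_1) \ncong \det(Q_2)$. This slope analysis of subsheaves in a direct sum with two distinct slopes is the most delicate step of the argument.
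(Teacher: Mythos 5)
This is essentially the paper's proof: the maximal destabilizing subbundle of the generic fibre has constant determinant because the base is rational, its flat limits at \(b_1\) and \(b_2\) are forced to coincide with \(Q_1\) and \(Q_2\), and this contradicts \(\det(Q_1)\ncong\det(Q_2)\); you have merely made explicit the steps the paper compresses into ``specializing along arcs'' and the citation of Lemma~\ref{ss-from-exact} (the Quot-scheme closure, and the slope analysis of subsheaves of \(S_i\oplus Q_i\)). One caveat: your assertion that semi-abelian varieties receive no non-constant morphisms from rational varieties is false as stated (\(\mathbb{G}_m\) is both semi-abelian and rational), so the constancy of the determinant map genuinely requires \(\Pic^0(C)\) to have no toric part --- but the paper's own proof makes the same tacit assumption, and in every application of this lemma \(C\) is a smooth elliptic curve, so nothing is lost.
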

\begin{proof}
Assume to the contrary that the general vector bundle $\E_b$ is unstable and let
$\F_b$ be a maximal destabilizing subbundle. Since \(B\) is rational, every rational map \(B \dashrightarrow \Pic (C)\)
is constant; thus, $\mbox{det}(\F_b)$ is constant. However, as we specialize to \(b_i\) along any arc, Lemma~\ref{ss-from-exact}
implies that \(\F\) specializes to \(S_i\).
Thus \(\mbox{det}(Q_1)\cong \mbox{det}(Q_2)\), which is a contradiction.
\end{proof}

\vskip 3pt

\begin{lm} \label{rat-base-stab}
Let $\{\E_b\}_{b\in B}$ be a family of vector bundles on a nodal curve \(C\),
parameterized by a rational variety \(B\).
Assume for two points \(b_1, b_2 \in B\),
the restrictions \(\E_{b_i}\) fit into exact sequences
\begin{equation} \label{seq-stab}
0 \longrightarrow S_i \longrightarrow \E_{b_i} \longrightarrow Q_i \longrightarrow 0,
\end{equation}
where \(S_i\) and \(Q_i\) are stable bundles with \(\mu(S_i) = \mu(Q_i) = \mu(\E)\),
which satisfy \(\rk (S_1) = \rk (S_2) = s\) and
\(\rk (Q_1) = \rk (Q_2) = q\). Suppose \(\mathrm{det}(S_1)\ncong \mathrm{det}(S_2)\).
If \(s = q\), suppose furthermore that the sequences \eqref{seq-stab} are nonsplit
and \(\mathrm{det}(S_i) \ncong \mathrm{det}(Q_i)\).
Then \(\E_b\) is stable for a general \(b \in B\).
\end{lm}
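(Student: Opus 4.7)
The proof follows the template of Lemma~\ref{rat-base}, adapted to the present setting where $\mu(S_i) = \mu(Q_i) = \mu(\E)$, so $\E_{b_i}$ is only strictly semistable rather than unstable. I first check that $\E_b$ is semistable for general $b \in B$: each $\E_{b_i}$ is an extension of two stable bundles of the same slope, hence is semistable (by Lemma~\ref{lm:components-to-special} applied trivially, or by direct slope computation), and by openness of semistability (Lemma~\ref{lm:stab-open}) this property extends from a neighborhood of the $b_i$ to the generic point of the irreducible base $B$. It therefore suffices to rule out strict semistability of the generic $\E_b$.

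Assume for contradiction that the general $\E_b$ is strictly semistable. A relative Quot scheme argument---using that Quot is proper---produces a family of slope-$\mu(\E)$ subbundles $\F_b \subsetneq \E_b$ of some fixed rank $r$ with $0 < r < s + q$, defined on a dominant rational cover of $B$. Since this cover is rational and $\Pic(C)$ is separated, both $\det(\F_b)$ and $\det(\E_b)$ are constant in $b$. At each $b_i$, let $\F_{b_i}^{\mathrm{sat}} \subseteq \E_{b_i}$ be the saturation of the limit subsheaf, a slope-$\mu(\E)$ subbundle of rank $r$ (saturation preserves rank and can only increase slope, but the ambient bundle is semistable). Writing $K$ and $I$ for the kernel and image of $\F_{b_i}^{\mathrm{sat}} \to Q_i$, the stability of $S_i$ and $Q_i$ combined with $\mu(\F_{b_i}^{\mathrm{sat}}) = \mu(\E)$ forces each of $K$ and $I$ to be either $0$ or the full bundle. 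Hence $\F_{b_i}^{\mathrm{sat}} \in \{0, S_i, Q_i, \E_{b_i}\}$, with $Q_i$ realized as a subbundle of $\E_{b_i}$ only when the sequence at $b_i$ splits.

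The proof concludes with a case analysis. If $s \neq q$, constancy of $r$ forces $\F_{b_i}$ to equal $S_i$ at both points (giving $\det(S_1) \cong \det(S_2)$, against the hypothesis) or to equal $Q_i$ at both points; in the latter case both sequences split, we get $\det(Q_1) \cong \det(Q_2)$, and the constancy of $\det(\E_b) \cong \det(S_i) \otimes \det(Q_i)$ again yields $\det(S_1) \cong \det(S_2)$. If $s = q$, the nonsplit hypothesis forbids $\F_{b_i} = Q_i$, so $\F_{b_i} = S_i$ for both $i$ and the same contradiction arises; the side assumption $\det(S_i) \ncong \det(Q_i)$ guarantees $S_i \ncong Q_i$, which is what legitimises the enumeration above (two non-isomorphic stable bundles of the same slope have disjoint Jordan--H\"older contributions). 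The main technical step is producing the destabilizing family $\F$ rigorously across the rational base $B$ with control on its determinant; this is exactly what the relative Quot scheme provides, after which the combinatorial classification of slope-$\mu$ subbundles of $\E_{b_i}$ is routine from the stability of $S_i$ and $Q_i$.
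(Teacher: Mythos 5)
Your overall strategy matches the paper's: reduce to ruling out strict semistability of the general $\E_b$, classify the slope-$\mu$ subbundles of $\E_{b_i}$ using the stability of $S_i$ and $Q_i$, and derive a contradiction from the constancy of determinants over the rational base. But there is a genuine gap at the crucial step. You produce the family of destabilizing subbundles $\F_b$ only over a ``dominant rational cover of $B$'' and then assert that this cover is rational. That is unjustified: a dominant (even generically finite) cover of a rational variety need not be rational, and a rational map from such a cover to $\Pic(C)$ need not be constant --- think of an elliptic curve mapping $2:1$ to $\PP^1$. Concretely, if the general $\E_b$ were isomorphic to $F_b \oplus F_b'$ with two non-isomorphic stable slope-$\mu$ summands of the same rank, the relevant component of the relative Quot scheme would be a degree-two cover of $B$ whose sheets could be exchanged by monodromy; only the product $\det(F_b)\otimes\det(F_b') \cong \det(\E_b)$ is then forced to be constant, and since $\mathrm{Sym}^2$ of an abelian variety contains rational curves, $\det(F_b)$ itself could vary. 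So your appeal to rationality does not go through as stated.

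The paper closes exactly this gap by proving that for general $b$ the slope-$\mu$ subbundle of the relevant rank $f$ is \emph{unique}, so that the family $\F$ is defined over a dense open subset of $B$ itself (no cover needed) and its determinant is genuinely constant. That uniqueness argument --- taking two such subbundles $F_b, F_b'$, saturating $F_b+F_b'$, and either specializing a non-stable slope-$\mu$ subbundle to $\E_{b_i}$ (impossible by the classification) or forcing $\E_b \cong F_b \oplus F_b'$ with both summands specializing to $S_i$, whence $\det(S_i)^{\otimes 2} \cong \det(S_i)\otimes\det(Q_i)$ --- is precisely where the hypotheses that the sequences are nonsplit and that $\det(S_i) \ncong \det(Q_i)$ when $s=q$ are consumed. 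Relatedly, you use $\det(S_i)\ncong\det(Q_i)$ only to conclude $S_i \ncong Q_i$ and ``legitimise the enumeration,'' but the classification of proper slope-$\mu$ subbundles of $\E_{b_i}$ as $S_i$ or $Q_i$ (the latter only in the split case) already follows from the stability of $S_i$ and $Q_i$ via the kernel/image argument and needs no such assumption; the fact that a hypothesis of the lemma is left without a real job in your argument is a symptom of the missing uniqueness step. You need to add that step to complete the proof.
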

\begin{proof}
Write $\mu = \mu(\E)=\mu(\E_b)$, where
 \(b \in B\) is a general point.
Since semistability is open, \(\E_b\) is semistable.
Assume to the contrary that \(\E_b\) is not stable,
that is, there exists a proper subbundle \(F_b \subset \E_b\) satisfying
\(\mu(F_b) = \mu\).
Write \(f = \rk(F_b)\). As we specialize to \(b_i\) along any arc, \(F_b\) limits to a subbundle of
\(\E_{b_i}\) of slope \(\mu\) (cf.\ \cite[Proposition 2.3]{clv}).
Using \eqref{seq-stab}, we see that any such subbundle is isomorphic to
\(S_i\) or \(Q_i\) (with only \(S_i\) permitted if \(s = q\)
by our assumption that \eqref{seq-stab} is nonsplit in this case).
In particular, we have either \(f = s\) or \(f = q\).

\vskip 4pt

Since \(B\) is rational, every rational map \(B \dashrightarrow \Pic(C)\)
is constant, that is, \(\mbox{det}(\E_b)\) is constant.
If \(F_b\) is unique (among subbundles of $\E_b$ with rank \(f\) and slope \(\mu\)), then in particular, \(F_b = \F_{b}\)
would be the restriction of subbundle \(\F \subseteq \E\),
defined on a dense open subset of \(B\).
Using again that any rational map \(B \dashrightarrow \Pic(C)\) is constant, we obtain that \(\mbox{det}(F_b)\) is similarly constant.
However, as we specialize to \(b_i\) along any arc, \(\F\)
would specialize to either \(S_i\) (if \(f = s\)) or \(Q_i\) (if \(f \neq s\)).
Thus \(\mbox{det}(S_1)\cong \mbox{det}(S_2)\), which contradicts our assumption.

\vskip 4pt

It therefore remains to argue that \(F_b\) is unique among all subbundles of $\mathcal{E}_b$ having rank $f$ and slope $\mu$.
Assume to the contrary that \(F_b\) and \(F_b'\) are two proper subbundles of slope \(\mu\) and rank \(f\).
Dualizing if necessary,
we may suppose without loss of generality that \(f = \min\{s, q\}\).
Note that the saturation of
\(F_b + F_b'\subseteq \E_b\) has slope at least (and therefore exactly) \(\mu\). If this saturation is a proper subbundle of \(\E_b\),
then specializing to \(b_i\) along any arc, \(\E_{b_i}\)
has a subbundle of slope \(\mu\) which is not stable.
However, using \eqref{seq-stab}, no such subbundle exists, since the Jordan-H\"older filtration of $\E_{b_i}$ has only two factors.

\vskip 3pt

Otherwise, if the saturation of $F_b+F_b'$ equals \(\E_b\),
then \(2f \geq s + q\), forcing \(f = s = q\).
Moreover,
the natural morphism \(F \oplus F \to \E\) is generically injective, therefore everywhere injective, with cokernel supported
at \(\deg (\E_b) - \deg (F_b \oplus F_b') = 0\) points. Thus, \(\E_b \cong F_b \oplus F_b'\).
Specializing to \(b_i\) along any arc, both \(F_b\) and \(F_b'\) specialize to \(S_i\).
Therefore, by taking limits we find that  $\mbox{det}(S_i)\otimes \mbox{det}(Q_i)\cong \mbox{det}(\E_{b_i})\cong \mbox{det}(S_i)\otimes \mbox{det}(S_i)$,
which contradicts our assumption that \(\mbox{det}(S_i) \neq \mbox{det}(Q_i)\).
\end{proof}

\section{The Weak Raynaud condition and the Ideal Generation property\label{sec:g0}}

In this section we explain an inductive argument
that will be helpful in proving both Theorems~ \ref{thm:main} (Minimal Resolution property) and \ref{thm:stab} (Butler's conjecture).

\begin{lm} \label{lm:project}
Let $\bigl(C, \ell=(L,V)\bigr)$  be a  BN curve of degree \(d < 2r\) and genus $g$. We fix a general point $p\in C$.
If  $M_{V(-p)}$ satisfies the weak Raynaud condition, then so does $M_V$.
\end{lm}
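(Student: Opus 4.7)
The plan is to derive a short exact sequence linking $M_V$ and $M_{V(-p)}$, then verify the weak Raynaud condition for $M_V$ degree by degree by combining the hypothesis with a cohomology vanishing for $\eta(-p)$.

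Since $p$ is a general point, the inclusion $V(-p) \hookrightarrow V$ has codimension one. The commutative diagram of evaluation sequences
\[
\begin{array}{ccccccccc}
0 & \to & M_{V(-p)} & \to & V(-p) \otimes \OO_C & \to & L(-p) & \to & 0 \\
& & \downarrow & & \downarrow & & \downarrow & & \\
0 & \to & M_V & \to & V \otimes \OO_C & \to & L & \to & 0
\end{array}
\]
has injective vertical maps with cokernels $\OO_C$ and $\OO_p$ in the middle and right columns respectively. Applying the snake lemma yields the short exact sequence
\[
0 \longrightarrow M_{V(-p)} \longrightarrow M_V \longrightarrow \OO_C(-p) \longrightarrow 0.
\]

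To verify weak Raynaud for $M_V$, I fix an integer $e$ and seek $\eta \in \Pic^e(C)$ with $H^0(M_V \otimes \eta) = 0$ or $H^1(M_V \otimes \eta) = 0$. Tensoring the sequence above by $\eta$ and taking the long exact sequence, it suffices to find $\eta$ with $H^i(\eta(-p)) = 0$ and $H^i(M_{V(-p)} \otimes \eta) = 0$ for a common $i \in \{0,1\}$. I would split into cases based on the sign of $\chi(M_V \otimes \eta) = r(e + 1 - g) - d$. If $\chi(M_V \otimes \eta) \leq 0$, I target $i = 0$: the bound $d < 2r$ gives $e + 1 - g \leq d/r < 2$, hence $e \leq g$ and $h^0(\eta(-p)) = 0$ generically; the inequality $d \geq r$ (non-degeneracy of $C$ in $\PP^r$) gives $d/r \leq (d-1)/(r-1)$, so $\chi(M_{V(-p)} \otimes \eta) \leq 0$, and the hypothesis (applied generically, since $\Pic^e(C)$ is irreducible) yields $h^0(M_{V(-p)} \otimes \eta) = 0$ generically. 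If instead $\chi(M_V \otimes \eta) \geq 1$, I target $i = 1$: the bound $(d+1)/r > 1$ forces $e \geq g + 1$ and hence $h^1(\eta(-p)) = 0$ generically, while the elementary inequality $(d+1)/r \geq (d-1)/(r-1)$, equivalent to $d \leq 2r - 1$, forces $\chi(M_{V(-p)} \otimes \eta) \geq 0$ and hence $h^1(M_{V(-p)} \otimes \eta) = 0$ generically by hypothesis.

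The main point of the argument is this last numerical alignment. The hypothesis $d < 2r$ is exactly what guarantees that the sign thresholds for $\chi(M_V \otimes \eta)$ and $\chi(M_{V(-p)} \otimes \eta)$ interleave without gap, so that every integer $e$ falls into one of the two cases above and the hypothesis on $M_{V(-p)}$ can be transferred to $M_V$ via the short exact sequence. The rest is formal manipulation of the long exact sequence together with routine generic-vanishing facts for line bundles on $C$.
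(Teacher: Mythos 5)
Your argument is correct and is essentially the paper's proof: the paper uses the same exact sequence $0 \to M_{V(-p)} \to M_V \to \OO_C(-p) \to 0$ and then simply invokes Lemma~\ref{lm:wr}, whose slope hypothesis $\lceil \mu(\OO_C(-p))\rceil \leq \lfloor \mu(M_{V(-p)})\rfloor + 1$ is exactly the numerical interleaving you verify by hand (and is where $d < 2r$ enters). Your case analysis on the sign of $\chi(M_V\otimes\eta)$ amounts to an inline re-derivation of that lemma in this special case.
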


\begin{proof}
Note that if $f\colon C\stackrel{|V|}\longrightarrow \PP^r$ is the map induced by the linear system $\ell$ and $f_p\colon C\rightarrow \PP^{r-1}$ is the projection of $C$ from the point $f(p)$, then  $M_{V(-p)}^{\vee}\cong f_p^*(T_{\PP^{r-1}})\otimes L^{\vee}(p)$.
We use the exact sequence
$$0\longrightarrow M_{V(-p)}\longrightarrow M_V\longrightarrow \OO_C(-p)\longrightarrow 0.$$
Since $-1=\mu(\OO_C(-p))\leq 1+ \lfloor\mu(M_{V(-p)})\rfloor=1+\lfloor-\frac{d-1}{r-1}\rfloor$,
applying Lemma~\ref{lm:wr} we find that $M_V$ satisfies the weak Raynaud condition as desired.
\end{proof}

\begin{rem} Via Lemma~\ref{lm:project}
we recover by induction on \(r\) the well known fact that a rational normal curve $R\subseteq  \PP^r$ (that is, $g=0$ and $d=r$) satisfies the weak Raynaud condition. In fact, using the Birkhoff--Grothendieck classification of vector bundles on $\PP^1$, we obtain that $M_{\OO_R(1)}\cong \OO_{\PP^1}(-1)^{r}$, that is, $R$ satisfies the Minimal Resolution property as well.
\end{rem}

\begin{lm} \label{lm:d-r}
Let $C\subseteq \PP^r$ be a $BN$ curve of genus $g$ and degree $d$. Suppose that \(T_{\PP^r|C}\)
is either stable, semistable, strongly stable, strongly semistable, satisfies the weak (or the strong) Raynaud condition.
Then for any \(0 \leq \epsilon \leq r + 1\),
the same condition holds for a BN curve
of degree \(d + r\) and genus \(g + \epsilon\) in $\PP^r$.
If \(\epsilon = 0\), the same is true for the condition of \(T_{\PP^r|C}\) being a general vector bundle.
\end{lm}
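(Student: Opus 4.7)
The plan is to degenerate a BN curve of degree $d+r$ and genus $g+\epsilon$ to the reducible curve $C' := C \cup R \subseteq \PP^r$, where $R$ is a rational normal curve meeting $C$ transversally at $\epsilon + 1$ general points. The arithmetic genus of $C'$ equals $g + \epsilon$ and its degree is $d + r$; since $\rho(g+\epsilon, r, d+r) - \rho(g, r, d) = r(r+1-\epsilon) \geq 0$ for $\epsilon \leq r+1$, a standard deformation argument (using $H^1(C', T_{\PP^r|C'}) = H^1(C, T_{\PP^r|C}) \oplus H^1(\PP^1, \OO(r+1)^r) = 0$ via a Mayer--Vietoris sequence) shows that $C'$ smooths inside $\PP^r$ to a BN curve of the desired invariants.

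The restriction of the tangent bundle to $C'$ is determined componentwise: on $C$ it is the given $T_{\PP^r|C}$, while on $R$ it is $T_{\PP^r|R} \cong \OO_{\PP^1}(r+1)^r$. The crucial feature of $T_{\PP^r|R}$ is that it is a direct sum of copies of a single line bundle of \emph{integral} slope $r+1$, and its exterior powers are likewise direct sums of copies of a single line bundle of integral slope. Consequently, on $R$ the restricted tangent bundle is (strongly) semistable, satisfies both the weak and strong Raynaud conditions, and is general in the sense of Definition~\ref{def:gen_vb}.

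I would then propagate each property from the components to $C'$. For (semi)stability, Lemma~\ref{lm:components-to-special} applies directly: stability on $C$ together with semistability on $R$ yields stability of $T_{\PP^r|C'}$. For strong (semi)stability in positive characteristic, Corollary~\ref{lm:components-to-special-strong} plays the analogous role. For the weak or strong Raynaud condition, Lemma~\ref{lm:ray-open} applies, since the integrality-of-slope hypothesis on all but one component is automatic for $R$. In the generality clause (where $\epsilon = 0$ and $R$ is attached at a single point $p$), Lemma~\ref{lm:general} applies with $a = r+1$: the required hypothesis that $T_{\PP^r|C}\bigl((r+1)p\bigr)$ is general on $C$ follows from the assumed generality of $T_{\PP^r|C}$, since tensoring by $\OO_C\bigl((r+1)p\bigr)$ is an isomorphism on the relevant moduli space of stable bundles of fixed slope. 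Finally, openness under deformation---Lemma~\ref{lm:stab-open} for (semi)stability, its Frobenius iterates for the strong versions in characteristic $p$, and the openness of the Raynaud condition recorded before Lemma~\ref{lm:ray-open}---promotes each conclusion from the special fiber $C'$ to the smoothing.

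The main technical step I expect to require care is the smoothability, where one must simultaneously verify that $C'$ lies in the closure of a component of the Hilbert scheme whose general member is a BN curve, and that the embedded degeneration provides a flat family of kernel bundles across which the openness arguments can be coordinated. In particular for the strong-semistability case, openness is not a single open condition but a countable intersection of openness statements (one for each Frobenius iterate), so the argument must be repeated uniformly across all $e \geq 0$; this is the place where the special geometry of $\OO_{\PP^1}(r+1)^r$ (a polystable bundle whose Frobenius pullbacks remain polystable) is used in an essential way.
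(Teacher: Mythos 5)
Your proposal follows essentially the same route as the paper: degenerate to $C\cup_\Gamma R$ with $R$ a rational normal curve meeting $C$ at $\epsilon+1$ points, propagate each property from the components via Lemmas~\ref{lm:ray-open}, \ref{lm:components-to-special}, Corollary~\ref{lm:components-to-special-strong}, and Lemma~\ref{lm:general}, then use openness under deformation, the only substantive thing left to check being that the union smooths to a BN curve.

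One step is stated incorrectly, and it happens to be the step where the hypothesis $\epsilon\leq r+1$ is actually used. The cohomology of the restricted tangent bundle on the union is not the direct sum $H^1(C, T_{\PP^r|C})\oplus H^1(\PP^1,\OO_{\PP^1}(r+1)^r)$: the Mayer--Vietoris sequence also involves the restriction map to $H^0$ of the fibre over $\Gamma$, whose cokernel can contribute to $H^1$ of the union. The paper instead uses the exact sequence
\[0 \longrightarrow T_{\PP^r|R}(-\Gamma) \longrightarrow f^* T_{\PP^r} \longrightarrow T_{\PP^r|C} \longrightarrow 0,\]
so that the required vanishing on the rational component is $H^1\bigl(\PP^1,\OO_{\PP^1}(r-\epsilon)^r\bigr)=0$, which holds precisely when $\epsilon\leq r+1$. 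As you have written it, the bound on $\epsilon$ never enters, which should be a red flag; once you replace the direct-sum claim with the twisted sequence above (or equivalently keep Mayer--Vietoris but verify surjectivity of the evaluation map onto $\Gamma$), the argument closes and coincides with the paper's. Your remaining points --- that twisting by $\OO_C((r+1)p)$ preserves generality, and that strong semistability is handled uniformly over all Frobenius iterates by the componentwise criterion --- are correct and consistent with how the paper proceeds.
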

\begin{proof}
Let \(f \colon C \cup_\Gamma \PP^1 \to \PP^r\) be the union of a
BN curve $C\subseteq \PP^r$ of degree \(d\) and genus \(g\),
and a rational normal curve \(f_{|\PP^1}\colon \PP^1\rightarrow R\subseteq \PP^r\), meeting transversally at a set \(\Gamma\) of
\(\epsilon + 1\) points. Note that the slope of $T_{\PP^r|R}$ is integral, therefore
by Lemma~\ref{lm:ray-open},
or Lemma \ref{lm:stab-open} and Lemma~\ref{lm:components-to-special} or Corollary~\ref{lm:components-to-special-strong},
or Lemma~\ref{lm:general} (in the case $\epsilon=0$),
in order to conclude it suffices to establish that \(f\) corresponds to a BN curve.

\vskip 3pt

From the Gieseker--Petri theorem  \(H^1\bigl(C, T_{\PP^r|C}\bigr) = 0\)
and \(H^1\bigl(\PP^1, f^*_{|\PP^1}T_{\PP^r}(-\Gamma)\bigr) = 0\) since \(f_{|\PP^1}^* T_{\PP^r}\)
satisfies the weak Raynaud condition as discussed.
Using the exact sequence
\[0 \longrightarrow f_{|\PP^1}^* T_{\PP^r}(-\Gamma) \longrightarrow f^* T_{\PP^r} \longrightarrow T_{\PP^r|C} \longrightarrow 0,\]
we conclude \(H^1\bigl(C\cup R, f^* T_{\PP^r}) = 0\), and so \(f\) is a BN curve.
\end{proof}

As a byproduct, we obtain a simple alternative proof of the Ideal Generation conjecture for BN curves
(which has been established with more complicated methods in \cite[Theorem 0.3]{AFO}).

\begin{prop} \label{prop:weak}
Let $C\subseteq \PP^r$ be a general BN curve of genus $g$ and degree $d$, where $\rho(g,r,d)\geq 0$. Then \(T_{\PP^r|C}\) satisfies the weak Raynaud condition.
\end{prop}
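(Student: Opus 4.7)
The plan is to induct on the degree \(d\), combining the two reductions Lemma~\ref{lm:project} and Lemma~\ref{lm:d-r}. The base case \(d = r\) is immediate: \(\rho(g, r, r) = -rg \geq 0\) forces \(g = 0\), so \(C\) is a rational normal curve, and the remark following Lemma~\ref{lm:project} already verifies the weak Raynaud condition.

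For the inductive step on a triple \((g, r, d)\) with \(d > r\) and \(\rho(g, r, d) \geq 0\), I would split into two cases. When \(d \geq 2r\), I plan to apply Lemma~\ref{lm:d-r} in reverse. If \(g \geq r + 1\), choosing \(\epsilon = r + 1\) gives the smaller-degree triple \((g - r - 1, r, d - r)\) with \(\rho(g - r - 1, r, d - r) = \rho(g, r, d) \geq 0\); if \(g \leq r\), choosing \(\epsilon = g\) gives \((0, r, d - r)\) with \(\rho(0, r, d - r) = (r + 1)(d - 2r) \geq 0\). In either case the inductive hypothesis provides a general BN curve in \(\mathcal{G}^r_{d-r}\) of genus \(g - \epsilon\) satisfying the weak Raynaud condition, and Lemma~\ref{lm:d-r} propagates this upward.

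The subtler case is \(d < 2r\), where I would use Lemma~\ref{lm:project}. The key numerical observation is that the Brill--Noether bound \(g \leq (r + 1)(d - r)/r\) combined with \(d - r < r\) forces the sharper inequality \(g \leq d - r\), since \((r + 1)(d - r)/r\) lies strictly between \(d - r\) and \(d - r + 1\). Consequently \(\rho(g, r - 1, d - 1) = \rho(g, r, d) + (g - d + r) \geq 0\), so the inductive hypothesis applies to \(\mathcal{G}^{r-1}_{d-1}\); moreover \(d - 1 \geq g + r - 1 \geq 2g\), so \(L(-p)\) is non-special for every \(p \in C\). A dimension count then shows that the forgetful morphism \((C, L, V, p) \mapsto (C, L(-p), V(-p))\) from the universal pointed curve over \(\mathcal{G}^r_d\) to \(\mathcal{G}^{r-1}_{d-1}\) is dominant, with generic fibres of dimension \(1 + (d - g - r) \geq 1\). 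Pulling back the dense open weak Raynaud locus on \(\mathcal{G}^{r-1}_{d-1}\) supplied by the inductive hypothesis and applying Lemma~\ref{lm:project} upgrades weak Raynaud for \(M_{V(-p)}\) to weak Raynaud for \(M_V\), yielding the claim for a general BN curve in \(\mathcal{G}^r_d\).

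The main obstacle I foresee is in the \(d < 2r\) case, namely the numerical inequality \(g \leq d - r\) together with the corresponding dominance of the projection morphism to \(\mathcal{G}^{r-1}_{d-1}\). Without this inequality, the extra section needed to extend \(V(-p) \subseteq H^0(L(-p))\) to \(V \subseteq H^0(L)\) of dimension \(r + 1\) could fail to exist for generic choices, the projection map would not be dominant, and the induction would collapse. Once these numerical checks are in place, the rest of the argument flows routinely from the two reduction lemmas.
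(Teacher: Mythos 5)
Your proof is correct and follows essentially the same route as the paper's: induction on the degree, applying Lemma~\ref{lm:d-r} when \(d \geq 2r\) and Lemma~\ref{lm:project} when \(d < 2r\) (the paper starts the induction at \(d = 1\) rather than \(d = r\), an immaterial difference). The numerical verifications and the dominance of the projection to \(\mathcal{G}^{r-1}_{d-1}\) that you supply are correct elaborations of steps the paper leaves implicit.
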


\begin{proof}
We argue by induction on \(d\). The base case \(d = 1\) (which forces \(r = 1\) and \(g = 0\))
is clear.
For the inductive step, we apply Lemma~\ref{lm:project} if \(d < 2r\),
and Lemma~\ref{lm:d-r} if \(d \geq 2r\).
\end{proof}

The statement of Proposition \ref{prop:weak} goes under the name of the \emph{Ideal Generation Property} (IGP) which holds for a BN curve, without any restriction on $d$. If $\Gamma\subseteq C$ is a general set of $\gamma\geq d\cdot \mbox{reg}(C)-g+1$ points on $C$, then IGP yields the exact value of the Betti numbers $b_{1,j}(\Gamma)$ and $b_{2,j}(\Gamma)$ of the generators of the ideal $\I_{\Gamma/\PP^r}$, as predicted in the stamement of Theorem \ref{thm:main}.

\section{The Raynaud condition for Elliptic Curves\label{sec:g1}}

In this section, we establish that the kernel bundle of a general Brill--Noether \emph{elliptic} curve is a general bundle in the sense of Definition \ref{def:gen_vb}. This is a key fact, a generalization of which will be needed in the inductive proof of the Minimal Resolution property for general Brill--Noether curves.

\begin{thm} \label{thm:gen}
Let $J\subseteq \PP^r$ be a general Brill--Noether elliptic curve of degree $d$. Then \(f^* T_{\PP^r|J}\) is a general vector bundle on \(J\)
as  the map \(f\colon J\hookrightarrow \PP^r\) varies.
\end{thm}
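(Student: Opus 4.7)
The strategy is the degeneration sketched in the introduction: specialize $J$ to the nodal curve $J_0 \cup L_1 \cup \cdots \cup L_{d-r-1}$, where $J_0 \subseteq \PP^r$ is a general elliptic normal curve of degree $r+1$ and each $L_i$ is a line meeting $J_0$ transversally at a single point $p_i$. Since $T_{\PP^r|L_i} \cong \OO_{\PP^1}(1)^{r-1} \oplus \OO_{\PP^1}(2)$ by the Euler sequence, each attachment falls within the setup of Lemma~\ref{lm:general} with $a = 2$. Iterating that lemma, the problem reduces to showing that an appropriate twist of the iterated elementary modification of $T_{\PP^r|J_0}$, at the points $p_i$ and along the tangent directions singled out by the lines $L_i$, is a general vector bundle on $J_0$ in the sense of Definition~\ref{def:gen_vb}.

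For the base case $d = r + 1$, I first verify that $T_{\PP^r|J_0}$ itself is a general bundle on $J_0$. Since $\mathrm{gcd}\bigl(r, (r+1)^2\bigr) = 1$, this amounts to stability together with generality of the determinant in $\Pic^{(r+1)^2}(J_0)$. The determinant is $L^{r+1}$ where $L$ is the polarizing line bundle of degree $r+1$, and the $(r+1)$-st power map $\Pic^{r+1}(J_0) \to \Pic^{(r+1)^2}(J_0)$ is surjective, so generality of the determinant comes for free as $(J_0, L)$ varies in moduli. Stability would be established by a further degeneration of $J_0$ to an $(r+1)$-gon of lines in $\PP^r$ together with Lemmas~\ref{lm:stab-open} and~\ref{lm:components-to-special}: on each rational component the restricted tangent bundle is explicit, and the adjusted-slope condition of Definition~\ref{def:st1} reduces to a combinatorial check on the gluing data at the nodes of the cycle.

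For the inductive step, one varies the attachment data $(p_i, L_i)_{i=1}^{d-r-1}$ and shows that the isomorphism class of the resulting iterated modification sweeps out a dense subset of the relevant product of Picard varieties of $J_0$ parametrizing general bundles of the prescribed rank and degree. At each stage, once the bundle from the previous step is fixed, the tangent direction of $L_i$ at $p_i$ is free to vary in $\PP\bigl((T_{\PP^r|J_0})_{p_i}\bigr) \cong \PP^{r-1}$, and $p_i$ itself is free on $J_0$, providing ample parameters. The hard part, which I expect to be the main obstacle, is the dominance itself, since one must rule out that the image of the parameter space sits inside a proper subvariety. The plan, following the strategy alluded to in the introduction, is to specialize the lines $L_i$ to $1$-secant lines of an auxiliary rational normal curve $R \subseteq \PP^r$ meeting $J_0$ at $r + 2$ prescribed points: on $R$ one has $T_{\PP^r|R} \cong \OO_{\PP^1}(r+1)^r$ by Birkhoff--Grothendieck, so each elementary modification admits a transparent description, and the required transversality reduces to an explicit computation inside the moduli space of rational normal curves through those $r+2$ points.
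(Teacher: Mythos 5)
Your overall framework---degenerating to $J_0 \cup L_1 \cup \cdots \cup L_{d-r-1}$ and iterating Lemma~\ref{lm:general} with $a=2$ to reduce to the generality of an iterated modification of $T_{\PP^r|J_0}$ twisted by $2p_1+\cdots+2p_m$---is exactly the paper's, but the proposal has concrete gaps. First, you run this degeneration for arbitrary $d$, whereas it is only viable after first invoking Lemma~\ref{lm:d-r} with $\epsilon=0$ to reduce to $r+1\leq d\leq 2r-1$: for $d\geq 2r$ the number of modification points $m=d-r-1$ is at least $r-1$, outside the range $m\leq r-1$ in which the paper's Proposition~\ref{prop-ss} operates (and in which such a transversality statement is even plausible). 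Second, and more seriously, the entire difficulty of the theorem is concentrated in the dominance claim that you defer to ``an explicit computation inside the moduli space of rational normal curves.'' That computation is Propositions~\ref{prop-sub-for-ss} and~\ref{prop-ss}: the paper does not verify transversality directly, but degenerates the auxiliary curve $R$ to a reducible curve $R_Q\cup R_S$ spanning complementary linear spans of subsets of the $n_i$, obtains an exact sequence of pointing-bundle modifications whose sub and quotient slopes are matched via a Farey-minimality condition so that Lemma~\ref{ss-from-exact} applies, and then uses rationality of the space of pointed rational normal curves (Lemma~\ref{rat-base}) to force the determinant of a putative destabilizing subbundle to be constant, contradicting the computed dependence of $\det(Q)$ on the ordering of the $n_i$. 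Without some such mechanism, your ``ample parameters'' count does not exclude the image landing in a proper subvariety, which you yourself identify as the main obstacle; so the proof is not actually given where it matters.

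Your base case also diverges from the paper and is incomplete as stated. The paper obtains semistability of $T_{\PP^r|J_0}$ as the $m=0$ instance of Proposition~\ref{prop-ss}, i.e.\ by induction on $r$ via projection together with the rational-base determinant trick; you instead propose degenerating $J_0$ to an $(r+1)$-gon of lines. Note that Lemma~\ref{lm:components-to-special} does not apply there, since $T_{\PP^r|\ell}\cong \OO_{\PP^1}(1)^{r-1}\oplus\OO_{\PP^1}(2)$ is unstable on each line, so you would have to carry out the full adjusted-slope analysis of Definition~\ref{def:st1} against all uniform-rank subbundles; this is plausible but is a separate argument you have not supplied. Your reduction of ``general'' to ``stable'' for the base case, using $\gcd\bigl(r,(r+1)^2\bigr)=1$ together with surjectivity of $L\mapsto L^{r+1}$ on Picard varieties, is correct.
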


We turn to the proof of Theorem~\ref{thm:gen}, and of the above-mentioned generalization
(stated as Proposition~\ref{prop-e} near the end of the section).
Our first step is as follows:

\begin{prop} \label{prop-sub-for-ss}
Let \(f\colon J \hookrightarrow \PP^r\) be an elliptic normal curve, and \(n_1, n_2, \ldots, n_a \in J\)
be general points, where \(1 \leq a \leq r - 1\).
Let \(x \in \Lambda \colonequals \langle n_1, n_2, \ldots, n_a \rangle \cong \PP^{a-1}\) be an \emph{arbitrary} point in general linear position
relative to \(n_1, n_2, \ldots, n_a\) and let \(R\subseteq \Lambda\) be a general rational normal curve through \(n_1, n_2, \ldots, n_a\) and $x$.
For some \(0 \leq b \leq a\), we choose general points
\(p_1, p_2, \ldots, p_b \in J\) and \(q_1, q_2, \ldots, q_b \in R\).
Then  \(T' \colonequals T_{f \to \Lambda}[p_1 \to q_1]\cdots[p_b \to q_b]\)
is semistable.
\begin{center}
\begin{tikzpicture}[scale=1.7]
\draw[thick] (1, 2) .. controls (0.5, 2) and (-0.5, 1.5) .. (0, 1);
\draw[thick] (0, 1) .. controls (1, 0) and (1, 2) .. (0.1, 1.1);
\draw[thick] (-0.1, 0.9) .. controls (-0.5, 0.5) and (0.5, -0.3) .. (1, -0.3);
\draw (1.1, 2) node{\(J\)};
\draw (0.3, 0.85) .. controls (0.3, 1.8) and (0.7, 1.8) .. (0.7, 1);
\draw (0.3, 0.7) .. controls (0.3, 0.2) and (0.7, 0.2) .. (0.7, 1);
\filldraw (0.6, 1.52) circle[radius=0.02];
\draw (0.75, 1.52) node{\(x\)};
\filldraw (0.69, 0.805) circle[radius=0.02];
\draw (0.85, 0.805) node{\(n_i\)};
\draw (0.28, 1.5) node{\(R\)};
\filldraw (0.95, -0.3) circle[radius=0.02];
\filldraw (0.5, 0.36) circle[radius=0.02];
\draw[->, densely dotted] (0.95, -0.3) -- (0.51, 0.35);
\draw (0.95, -0.41) node{\(p_i\)};
\draw (0.65, 0.36) node{\(q_i\)};
\draw[->, densely dotted] (0.2, 0.075) -- (0.4, 0.38);
\draw[->, densely dotted] (-0.1, 0.405) -- (0.33, 0.49);
\end{tikzpicture}
\end{center}
\end{prop}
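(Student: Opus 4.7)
We argue by induction on $a \geq 1$. For $a = 1$, the bundle $T'$ has rank one and is automatically semistable.

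For the inductive step with $a \geq 2$, set $\Lambda' := \langle n_1, \ldots, n_{a-1} \rangle \subset \Lambda$. Factoring the projection $\pi_\Lambda$ through $\pi_{\Lambda'}$ yields on $J$ an exact sequence
\[
0 \longrightarrow T_{f \to \Lambda'} \longrightarrow T_{f \to \Lambda} \longrightarrow M \longrightarrow 0,
\]
where $M$ is a line bundle of degree $r+2$. For generic $q_i \in R$ one has $q_i \notin \Lambda'$, so tracing the modifications through this sequence produces, after saturation, a short exact sequence
\[
0 \longrightarrow S \longrightarrow T' \longrightarrow Q \longrightarrow 0,
\]
in which $S$ is identified with an elementary modification of the smaller pointing bundle $T_{f \to \Lambda'}$ (of rank $a - 1$), and $Q$ is a line bundle on $J$.

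We then view $T'$ as the generic member of a rational family parameterized by the position of $q_b$ along $R \cong \PP^1$ (with the remaining data fixed), and apply Lemma~\ref{rat-base}. By the inductive hypothesis, $S$ is semistable. A direct degree computation using $\deg(T_{f \to \Lambda}) = a(r+2)$ together with the length $a-1$ of the quotient produced by each modification shows that $\mu(S)$ and $\mu(Q)$ differ by less than one, verifying the close-slope hypothesis of Lemma~\ref{ss-from-exact}. At two suitable specializations of $q_b$ on $R$ (for instance $q_b = n_a$ and $q_b = x$), the determinants $\det(Q_1)$ and $\det(Q_2)$ are distinct in $\Pic(J)$: this holds because the Abel--Jacobi map sends $R$ to a non-constant rational curve in $\Pic(J)$, and $\det(Q)$ varies with $q_b$ through this map. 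Lemma~\ref{rat-base} then yields semistability of $T'$ for general parameters.

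The main obstacle is bookkeeping: one must identify $S$ and $Q$ concretely at each specialization, particularly tracking the saturation of $T_{f \to \Lambda'}$ inside $T'$ after all the elementary modifications, and verify the coprimality condition on the rank and degree of $Q$ required by Lemma~\ref{ss-from-exact} (or, if coprimality fails, the non-splitness of the extension). Care is also needed to handle the borderline case $b = a$ where all modification slots are occupied, for which an auxiliary degeneration (e.g.\ moving one of the $p_i$ to a node $n_j$) may be required before the reduction to $\Lambda'$ applies.
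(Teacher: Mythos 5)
There is a genuine gap, and it is quantitative rather than a matter of bookkeeping. In your decomposition $\Lambda' = \langle n_1, \ldots, n_{a-1}\rangle$, for generic $q_i \notin \Lambda'$ the fibre $T_{f \to q_i}|_{p_i}$ meets $T_{f \to \Lambda'}|_{p_i}$ trivially, so the induced sub is $S = T_{f \to \Lambda'}(-p_1 - \cdots - p_b)$, of slope $(r+2) - b$, while the quotient $Q$ is an unmodified line bundle of degree $r+2$. Thus $\mu(Q) - \mu(S) = b$, which is $\geq 2$ as soon as $b \geq 2$; your claim that the slopes differ by less than one is false, and the hypotheses of Lemma~\ref{ss-from-exact} (which require $\mu(Q)$ to be the \emph{smallest} rational number exceeding $\mu(T')$ with denominator below $\rk(T')$) cannot be met by peeling off a single point. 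An exact sequence whose sub and quotient slopes are this far apart carries no semistability information: a destabilizing subbundle of slope strictly between $\mu(T')$ and $\mu(Q)$ is not excluded. A secondary problem is that in your generic setup $Q \cong T_{f\to\Lambda}/T_{f\to\Lambda'}$ does not depend on $q_b$ at all, so varying $q_b$ along $R$ cannot produce two specializations with distinct $\det(Q)$ as Lemma~\ref{rat-base} requires.

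The missing idea is the arithmetic of the splitting. The paper chooses $z/w = \min\{z'/w' \in \QQ : z'/w' > b/a,\ w' < a\}$, splits $\Lambda$ into $\Lambda_Q = \langle x, n_1, \ldots, n_w\rangle$ and $\Lambda_S = \langle n_{w+1}, \ldots, n_a\rangle$, degenerates $R$ to $R_Q \cup R_S$ with $R_Q \subseteq \Lambda_Q$ and $R_S \subseteq \Lambda_S$, and specializes exactly $z$ of the points $q_i$ onto $R_Q$ and the rest onto $R_S$. This distributes the modifications between sub and quotient so that the quotient (of rank $w$) has slope $(r+2-b) + z/w$ against $\mu(T') = (r+2-b) + b/a$, which is precisely the minimality condition of Lemma~\ref{ss-from-exact}, with coprimality of $\deg(Q)$ and $\rk(Q)$ following from $\gcd(z,w)=1$. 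The two specializations for Lemma~\ref{rat-base} are then obtained by permuting the roles of the $n_i$, since $\det(Q)$ visibly depends on which $n_i$ land in $\Lambda_Q$. Finally, note the paper treats $b = 0$ and $b = a$ separately as direct sums of line bundles of equal degree (in the latter case after specializing $q_i \to n_i$), rather than by an auxiliary degeneration as you suggest.
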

\begin{proof} Note that from the exact sequence (\ref{eq:pointing_syz}) we have that $T_{f\to n_i}\cong \OO_J(1)(n_i)$, hence
\[T_{f \to \Lambda} = \bigoplus_{i = 1}^a T_{f \to n_i} \cong \bigoplus_{i=1}^a \O_J(1)\bigl(n_i\bigr),\]
which is evidently semistable. Thus, for $b=0$ the vector bundle $T'$ is semistable as well.
Similarly, if \(b = a\), then we may specialize each point \(q_i\) to \(n_i\) and then
\(T'\) specializes to
\[T_{f \to \Lambda}[p_1 \to n_1]\cdots[p_a \to n_a] \cong \bigoplus_{i = 1}^a \O_J(1)(n_i - p_1 - \cdots - \hat{p_i} - \cdots - p_a),\]
which is again evidently semistable.
We may thus suppose \(0 < b < a\). Let $z, w\in \mathbb N$ such that
\begin{equation} \label{zwdef}
\frac{z}{w} = \min\left\{\frac{z'}{w'} \in \QQ : \frac{z'}{w'} > \frac{b}{a} \andsmall w' < a\right\}.
\end{equation}

Note that $z\leq b$. The space of $b$-pointed rational normal curves $R\subseteq \Lambda$ passing through \(n_1, n_2, \ldots, n_a, x\) is itself a rational variety.
We can therefore apply Lemma~\ref{rat-base} by constructing certain
degenerations of the pointed curve \((R, q_1, \ldots, q_b)\).
To that end, let
\[\{x'\}:= \Lambda_Q \cap \Lambda_S\subseteq \Lambda, \where \Lambda_Q = \langle x, n_1, n_2, \ldots, n_w \rangle \andlarge \Lambda_S = \langle n_{w + 1}, n_{w + 2}, \ldots, n_a \rangle.\]
We then degenerate \(R\) to a reducible curve \(R_Q \cup R_S\),
where \(R_Q\subseteq  \Lambda_Q\) is a rational normal curve (of degree \(w\)) passing through
\(n_1, n_2, \ldots, n_w, x, x'\) ---
onto which we specialize \(q_1, q_2, \ldots, q_z\) ---
and \(R_S\subseteq \Lambda_S\) is a rational normal curve (of degree \(a - w - 1\)) passing through
\(n_{w + 1}, n_{w + 2}, \ldots, n_a, x'\) ---
onto which we specialize the marked points \(q_{z + 1}, q_{z + 2}, \ldots, q_b\):

\begin{center}
\begin{tikzpicture}[scale=1.5]
\draw[thick] (1, 2) .. controls (0.5, 2) and (-0.5, 1.5) .. (0, 1);
\draw[thick] (0, 1) .. controls (1, 0) and (1, 2) .. (0.1, 1.1);
\draw[thick] (-0.1, 0.9) .. controls (-0.5, 0.5) and (0.5, -0.3) .. (1, -0.3);
\filldraw (0.6, 1.52) circle[radius=0.01];
\draw[->, densely dotted] (0.95, -0.3) -- (0.75, 1.65);
\draw[->, densely dotted] (0.2, 0.075) -- (0.69, 0.3);
\draw[->, densely dotted] (-0.1, 0.405) -- (0.69, 0.49);
\draw (0.7, 0) -- (0.68, 1.7);
\draw (0.21, 1.175) -- (0.86, 1.75);
\filldraw (0.68, 1.59) circle[radius=0.02];
\draw (0.62, 1.7) node{\(x'\)};
\draw (1.0, 1.8) node{\(R_Q\)};
\draw (0.7, -0.1) node{\(R_S\)};
\end{tikzpicture}
\end{center}

For  a map \(h \colon X \to \PP^r\) or for a subscheme $X\subseteq \PP^r$, write \(\bar{h}
\colon X \to \PP^{r + w - a}\) respectively $\bar{X}\subseteq \PP^{r+w-a}$
for the composition of $h$ with the projection having center \(\Lambda_S\cong \PP^{a-w-1}\). This
projection map induces an exact sequence on $J$
\[0 \longrightarrow T_{f \to \Lambda_S} \longrightarrow T_{f \to \Lambda} \longrightarrow T_{\bar{f} \to
\bar{\Lambda}}\bigl(f^{-1}(\Lambda_S)\bigr) = T_{\bar{f} \to
\bar{\Lambda_Q}}(n_{w + 1} + \cdots + n_a) \longrightarrow 0.\]

Upon specializing the points \(q_i\) to points $q_i^\circ$ as above, the fibers \(T_{f \to
q_i^\circ}|_{p_i}\) are transverse to \(T_{f \to \Lambda_S}|_{p_i}\)
for \(1 \leq i \leq z\), and lie in \(T_{f \to \Lambda_S}|_{p_i}\) for
\(z + 1 \leq i \leq b\). The above exact sequence therefore induces an
exact sequence of modifications:
\begin{multline} \label{exact}
0 \longrightarrow T_{f \to \Lambda_S}\bigl(-p_1 - \cdots - p_z\bigr) [p_{z + 1} \to q_{z +
1}^\circ] \cdots [p_b \to q_b^\circ] \longrightarrow T_{f \to \Lambda}[p_1 \to
q_1^\circ]\cdots[p_b \to q_b^\circ] \\
\longrightarrow T_{\bar{f} \to \bar{\Lambda_Q}}(n_{w + 1} + \cdots + n_a - p_{z +
1} - \cdots - p_b)[\bar{p_1} \to \bar{q_1^\circ}]\cdots[\bar{p_z} \to
\bar{q_z^\circ}] \longrightarrow 0.
\end{multline}

By our inductive hypothesis, both the sub and quotient bundle in the above
exact sequence are semistable; by \eqref{zwdef}, this sequence
satisfies the assumptions of Lemma~\ref{ss-from-exact}.
To complete the proof, it suffices by Lemma~\ref{rat-base} to observe that
\begin{multline*}
\mbox{det}\Bigl(T_{\bar{f} \to \bar{\Lambda_Q}}(n_{w + 1} + \cdots + n_a - p_{z + 1} - \cdots - p_b)[\bar{p_1} \to \bar{q_1^\circ}]\cdots[\bar{p_z} \to \bar{q_z^\circ}]\Bigr) \\
\cong \OO_J\bigl(wH + n_1 + \cdots + n_w - (w - 1)p_1 - \cdots - (w - 1)p_z - w p_{z + 1} - \cdots - wp_b\bigr)
\end{multline*}
depends nontrivially on the ordering of the points \(n_i\), as \(0 < w < a\) and \(n_1, n_2, \ldots, n_a\in J\) are general.
\end{proof}

\begin{prop} \label{prop-ss}
Let \(J \subseteq  \PP^r\) be an elliptic normal curve, $n_1, \ldots, n_{r+2}\in J$ be general points and
let \(R\) be a general rational normal curve
meeting \(J\) at \(n_1, n_2, \ldots, n_{r + 2}\).
If \(p_1, \ldots, p_m \in J\) and \(q_1, \ldots, q_m \in R\) are general points where $m\leq r-1$, then
the elementary modification \(T:=T_{\PP^r|J}[p_1 \to q_1] \cdots [p_m \to q_m]\bigl(2p_1+\cdots+2p_m\bigr)\)
is a general vector bundle on $J$.
\begin{center}
\begin{tikzpicture}[scale=1.5]
\draw[thick] (1, 2) .. controls (0.5, 2) and (-0.5, 1.5) .. (0, 1);
\draw[thick] (0, 1) .. controls (1, 0) and (1, 2) .. (0.1, 1.1);
\draw[thick] (-0.1, 0.9) .. controls (-0.5, 0.5) and (0.5, -0.3) .. (1, -0.3);
\draw (1.1, 2) node{\(J\)};
\draw (-0.1, -0.5) node{\(R\)};
\filldraw (0.785, -0.263) circle[radius=0.02];
\draw (0.8, -0.38) node{\(n_i\)};
\draw (0.1, 1.3) .. controls (0.5, 1.3) and (1.5, 1.0) .. (1, 0.5);
\draw (1, 0.5) .. controls (0, -0.5) and (0, 1.5) .. (0.9, 0.6);
\draw (1.1, 0.4) .. controls (1.5, 0) and (0.5, -0.5) .. (0, -0.5);
\draw[->, densely dotted] (0.6, 1.925) -- (0.2, 1.295);
\draw[->, densely dotted] (-0.1, 0.405) -- (0.255, 0.405);
\draw[->, densely dotted] (0.2, 0.072) -- (1.17, 0.2);
\filldraw (0.2, 0.072) circle[radius=0.02];
\filldraw (1.19, 0.2) circle[radius=0.02];
\draw (1.3, 0.2) node{\(q_i\)};
\draw (0.1, 0) node{\(p_i\)};
\end{tikzpicture}
\end{center}
\end{prop}

\begin{proof}
For $f\colon J\hookrightarrow \PP^r$, we argue along the lines of the proof of Proposition~\ref{prop-sub-for-ss}.

If \(m = r - 1\), we may specialize each point \(q_i\) to \(n_i\) for $i=1,\ldots, r - 1$;
under this specialization, denoting by $\pi_{\langle n_1, \ldots, n_{r-1}\rangle} \colon \PP^{r}\dashrightarrow \PP^1$ the projection, \(T\) fits into an exact sequence:
\begin{multline*}
0 \longrightarrow \bigoplus_{i = 1}^{r - 1} T_{f \to n_i}\Bigl(p_i +\sum_{j=1}^{r-1}p_j\Bigr)  \longrightarrow T \\
\longrightarrow \bigl(\pi_{\langle n_1, \ldots, n_{r-1}\rangle} \circ f\bigr)^* T_{\PP^1}\Bigl(\sum_{j=1}^{r-1}(p_j+n_j)\Bigr) \cong \O_J(2)\Bigl(\sum_{j=1}^{r-1}(p_j-n_j)\Bigr) \longrightarrow  0.
\end{multline*}

Since \(T_{f\to n_i}(p_i+p_1+\cdots+p_{r-1})\cong \O_J(1)(n_i + p_i + p_1 + \cdots + p_{r - 1})\), and
\(\O_J(2)\Bigl(\sum_{j=1}^{r-1} (p_j-n_j)\Bigr)\),
are in general non-isomorphic line bundles of the same degree \(2r + 2\), this sequence must split, that is, \(T\)
is the direct sum of these line bundles, and is therefore general.
We may thus assume \(0 < m + 1 < r\).

If $\mbox{gcd}(m+1,r)=1$, then we claim it suffices to show
that \(T\) is semistable. Indeed,
\(T\) is of rank \(r\) and degree \(r(r + 2 - m) + m + 1\),
which are relatively prime; thus, \(T\) would be stable.
Moreover, \(\mbox{det}(T)=\OO_J(r+1)\bigl((r+1)(p_1+\cdots+p_m)\bigr)\) is general.
We conclude that \(T\) would be general if it were semistable, as claimed.
In this case, we define $z, w\in \mathbb N$ such that
\begin{equation} \label{xydef}
\frac{z}{w} = \min\left\{\frac{z'}{w'} \in \QQ : \frac{z'}{w'} > \frac{m + 1}{r} \andsmall w' < r\right\}.
\end{equation}

Otherwise, write \(m + 1 = ka\) and \(r = kb\), with $k=\mbox{gcd}(m+1,r)$
and set \(z: = (k - 1)a\) and \(w: = (k - 1)b\).

\vskip 4pt

Note that the space of $m$-pointed rational curves through \(n_1, n_2, \ldots, n_{r + 2}\) is itself rational.
If \(\mbox{gcd}(m + 1, r) = 1\), we can therefore apply Lemma~\ref{rat-base} by constructing certain
degenerations of \((R, q_1, \ldots, q_m)\), which will also imply the desired result when \(\mbox{gcd}(m + 1, r) \neq 1\).
To construct these degenerations, let
\[\{x'\} = \Lambda_Q \cap \Lambda_S, \where \Lambda_Q = \langle n_1, n_2, \ldots, n_{w + 2}\rangle \andlarge \Lambda_S = \langle n_{w + 3}, \ldots, n_{r + 2}\rangle.\]
We then degenerate \(R\) to a reducible curve \(R_Q \cup R_S\),
where \(R_Q\) is a rational normal curve in \(\Lambda_Q\) of degree \(w + 1\) passing through the points
\(n_1, n_2, \ldots, n_{w + 2}, x'\) ---
onto which we specialize the marked points \(q_1, q_2, \ldots, q_{z - 1}\) ---
and \(R_S\) is a rational normal curve in \(\Lambda_S\) of degree \(r - w - 1\) passing through
\(n_{w + 3}, \ldots, n_{r + 2}, x'\) ---
onto which we specialize the marked points \(q_{z}, \ldots, q_m\).

\begin{center}
\begin{tikzpicture}[scale=2]
\draw[thick] (1, 2) .. controls (0.5, 2) and (-0.5, 1.5) .. (0, 1);
\draw[thick] (0, 1) .. controls (1, 0) and (1, 2) .. (0.1, 1.1);
\draw[thick] (-0.1, 0.9) .. controls (-0.5, 0.5) and (0.5, -0.3) .. (1, -0.3);
\draw (0.145, 1.53) -- (0.845, 0.53);
\draw (0.4838, 0.22) .. controls (-0.0362, 1.3328) and (0.67, 1.6628) .. (1.19, 0.55);
\draw (0.4838, 0.22) .. controls (1.0038, -0.8928) and (1.71, -0.5628) .. (1.19, 0.55);
\filldraw (0.39, 1.18) circle[radius=0.02];
\draw (0.59, 1.11) node{\(x'\)};
\draw (0.95, 0.43) node{\(R_S\)};
\draw (1.4, 0.43) node{\(R_Q\)};
\draw[->, densely dotted] (0.6, 1.925) -- (0.2, 1.45);
\draw[->, densely dotted] (0.2, 0.072) -- (0.493, 0.2);
\draw[->, densely dotted] (-0.1, 0.405) -- (0.405, 0.405);
\end{tikzpicture}
\end{center}

Keeping the notation introduced in the proof of Proposition \ref{prop-sub-for-ss}, upon the specialization of the points \(q_i=q_i^\circ\) as described above, setting
$$S:=T_{f \to \Lambda_S}\bigl(p_1 + \cdots + p_{z - 1} + 2 p_z + \cdots + 2p_m\bigr) [p_z \to q_z^\circ] \cdots [p_m \to q_m^\circ]\   \mbox{ and }$$  $$Q:=\bar{f}^* T_{\PP^w}\bigl(n_{w + 3} + \cdots + n_{r + 2} + 2 p_1 + \cdots + 2p_{z - 1} + p_z + \cdots + p_m\bigr)\bigl[\bar{p_1} \to \bar{q_1^\circ}\bigr]\cdots\bigl[\bar{p_{z - 1}} \to \bar{q_{z - 1}^\circ}\bigr],$$ we obtain the following exact sequence on $J$
\begin{equation} \label{exact2}
0 \longrightarrow  S \longrightarrow T \longrightarrow Q \longrightarrow 0.
\end{equation}
By Proposition~\ref{prop-sub-for-ss}, $S$ is semistable, whereas by our inductive hypothesis, \(Q\) is a twist of a general vector bundle, and thus semistable.
Moreover, writing
\[\bar{H} = \bar{f}^* \OO_{\PP^w}(1) =\OO_J(1)\bigl(- n_{w + 3} - \cdots - n_{r + 2}\bigr),\]
we have by direct computation
\begin{align*}
\mbox{det}(S) &= (r - w)(\bar{H} + p_1 + \cdots + p_{z - 1}) + (r - w + 1)(n_{w + 3} + \cdots + n_{r + 2} + p_z + \cdots + p_m), \\
\mbox{det}(Q) &= (w + 1)(\bar{H} + p_1 + \cdots + p_{z - 1}) + w(n_{w + 3} + \cdots + n_{r + 2} + p_z + \cdots  + p_m).
\end{align*}

\vskip 4pt

We distinguish two cases. If \(\mbox{gcd}(r, m + 1) = 1\), via \eqref{xydef}, the sequence \eqref{exact2}
satisfies the assumptions of Lemma~\ref{ss-from-exact}.
To complete the proof, it suffices, by Lemma~\ref{rat-base}, to observe that \(\mbox{det}(Q)\)
depends nontrivially on the ordering of the \(n_i\).

\vskip 3pt
If \(\mbox{gcd}(r, m + 1) \neq 1\), by our inductive hypothesis \(Q\) is a twist of the general vector bundle
\(Q' = \bar{f}^* T_{\PP^w}(2 p_1 + \cdots + 2p_{z-1})[\bar{p_1} \to \bar{q_1^\circ}]\cdots[\bar{p_{z - 1}} \to \bar{q_{z - 1}^\circ}]\).
Since \(S\) is semistable, with rank and degree that are relatively prime,
and \(\mu(S) = \mu(Q)\), it suffices to observe that (i)
\(\mbox{det}(S)\) and \(\mbox{det}(Q)\) are independently general, and (ii) once \(\mbox{det}(S)\) and \(\mbox{det}(Q)\) are fixed,
the parameters \(p_1, q_1^\circ, \ldots, p_{z - 1}, q_{z - 1}^\circ\) and \(\bar{H}\)
that determine \(Q'\) remain general subject only to the \emph{divisorial} constraint
that \(\bar{H} + p_1 + \cdots + p_{z - 1}\) is a fixed general divisor class.
This constraint determines \(\mbox{det}(Q')\), and so
\(Q'\) remains general subject only to the constraint that \(\mbox{det}(Q')\) is fixed. \qedhere

\end{proof}

Consider the union
\(f \colon J'=J_0 \cup L_1 \cup \cdots \cup L_{d-r-1} \to \PP^r\),
of an elliptic normal curve \(J_0\subseteq \PP^r\) with \(1\)-secant lines \(L_i\subseteq \PP^r\) meeting \(J_0\) at \(p_i\), such that \(J_0\) meets \(R\) at \(r + 2\) points
\(n_1, n_2, \ldots, n_{r + 2}\),
and each \(L_i\) meets \(R\) at one point \(q_i\). This setup was illustrated in the picture in the introduction.

\vskip 3pt

\begin{lm} \label{er-is-bn}
We have \(H^1\bigl(J', f^* T_{\PP^r}(-n_1 - \cdots - n_{r + 2} - q_1 - \cdots - q_{d-r-1})\bigr) = 0\).
In particular, we may smooth
any subset of the nodes of \(J'\) while requiring that \(f(J')\)
meet \(R\) exactly in \(\{n_1, n_2, \ldots, n_{r + 2}, q_1, q_2, \ldots, q_{g - r - 2}\}\).
\end{lm}
\begin{proof}
We use the following exact sequence on $J'$
\begin{multline*}
0 \longrightarrow \bigoplus_{i = 1}^{d-r-1}  T_{\PP^r|L_i}(-p_i - q_i) \longrightarrow f^* T_{\PP^r}\Bigl(-\sum_{i=1}^{r+2} n_i - \sum_{j=1}^{d-r-1} q_j\Bigr) \longrightarrow  T_{\PP^r|J_0}\Bigl(-\sum_{i=1}^{r+2} n_i\Bigr) \longrightarrow 0.
\end{multline*}
Note that \(T_{\PP^r|L_i} \cong \O_{L_i}(1)^{r - 1} \oplus \O_{L_i}(2)\),
and so \(H^1\bigl(L_i, T_{\PP^r|L_i}(-p_i-q_i)\bigr) = 0\).

\vskip 3pt

It remains to show \(H^1\bigl(J_0, T_{\PP^r|J_0}(-\sum_{i=1}^{r+2} n_i)\bigr) = 0\).
Since \(\OO_{J_0}(n_1 + n_2 + \cdots + n_{r + 2})\in \mbox{Pic}^{r+2}(J_0)\) is general and \(\mu(T_{\PP^r|J_0}) = \frac{(r + 1)^2}{r} > r + 2\),
it suffices to show that \(T_{\PP^r|J_0}\)
satisfies the weak Raynaud condition.
This is a consequence of a special case (\(m = 0\)) of Proposition~\ref{prop-ss}.
\end{proof}

\begin{prop} \label{prop-e}
For each \(r + 1 \leq d \leq 2r - 1\) and \(0 \leq g \leq d + 1\), there exists a smooth elliptic curve \(J \subseteq  \PP^r\)
of degree \(d\),
meeting a rational normal curve \(R\) at a set \(\Gamma\) of \(g\) points,
for which \(T_{\PP^r|J}\) is a general vector bundle on \(J\).
\end{prop}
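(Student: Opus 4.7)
The strategy is to realize $J \cup R$ as a smoothing of an explicit reducible configuration in $\PP^r$, then use a specialization inside the moduli of rational normal curves to check the required generality of $T_{\PP^r}|_J$. Concretely, fix an elliptic normal curve $J_0 \subseteq \PP^r$ of degree $r+1$, together with general points $n_1, \ldots, n_{r+2} \in J_0$ and distinct auxiliary points $p_1, \ldots, p_{d-r-1} \in J_0$. Let $R \subseteq \PP^r$ range over the $(r-1)$-dimensional family of rational normal curves through $n_1, \ldots, n_{r+2}$, and for each such $R$ choose $q_i \in R$ and set $L_i$ equal to the line through $p_i$ and $q_i$. The nodal curve $C_0 := J_0 \cup L_1 \cup \cdots \cup L_{d-r-1}$ has arithmetic genus $1$ and degree $d$ and smooths to a BN elliptic curve $J \subseteq \PP^r$ of degree $d$; the $d+1$ points $\{n_j\} \cup \{q_i\}$ are the candidate intersections of $J$ with $R$ in this smoothing, and by smoothing out any $d+1-g$ of them we realize $|J \cap R| = g$ for any $0 \leq g \leq d+1$.

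Next, to analyze $T_{\PP^r}|_J$ on the smoothing, observe that $T_{\PP^r}|_{L_i} \cong \O_{\PP^1}(1)^{r-1} \oplus \O_{\PP^1}(2)$, with the $\O_{\PP^1}(2)$-summand being the tangent direction $T_{L_i}$ at $p_i$. Applying Lemma~\ref{lm:general} iteratively --- once per line --- reduces the generality of $T_{\PP^r}|_J$ on the smoothed $J$ to the generality on the elliptic curve $J_0$ of the iterated elementary modification
$$E_0 := T_{\PP^r}|_{J_0}\bigl[p_1 \to T_{L_1}|_{p_1}\bigr] \cdots \bigl[p_{d-r-1} \to T_{L_{d-r-1}}|_{p_{d-r-1}}\bigr]\Bigl(2\textstyle\sum_i p_i\Bigr).$$
A numerical check using $\deg(T_{\PP^r}|_{J_0}) = (r+1)^2$ confirms that $E_0$ has rank $r$ and degree $d(r+1)$, matching the target invariants. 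The remaining task is to show that $E_0$ splits according to Atiyah's classification into isotypic blocks of stable bundles with sufficiently generic determinants.

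To this end, exploit the $(r-1)$-dimensional rational family $B$ of rational normal curves through the prescribed $n_1, \ldots, n_{r+2}$: varying $R \in B$ moves each $q_i \in R$ and thus the modification direction $T_{L_i}|_{p_i}$, producing a family of vector bundles $\{\E_R\}_{R \in B}$ on $J_0$. With this family in hand, Lemmas~\ref{rat-base} and~\ref{rat-base-stab} reduce generality of the generic $\E_R$ to exhibiting two boundary specializations $R_1, R_2 \in B$ along which $\E_{R_i}$ admits a destabilizing exact sequence with non-isomorphic summand determinants. Identifying such boundary degenerations of $R$ --- for instance by letting $R$ break up into lower-degree rational curves colliding into some of the $p_i$ or $n_j$ --- and verifying the hypotheses of Lemma~\ref{rat-base-stab} on the resulting nodal configurations is the main technical obstacle. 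This is precisely the transversality statement Proposition~\ref{prop-ss}, and it relies essentially on the nodal-curve machinery of Section~3, in particular on the adjusted-slope condition of Definition~\ref{def:st1}, to track (semi)stability across the reducible boundary.
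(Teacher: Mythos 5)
Your proposal follows the paper's own proof essentially verbatim: the same degeneration to \(J_0 \cup L_1 \cup \cdots \cup L_{d-r-1}\), the same iterated application of Lemma~\ref{lm:general} to reduce to the generality of the elementary modification on the elliptic normal curve \(J_0\), and the same deferral of the technical core (the specialization of \(R\) inside the rational family through \(n_1, \ldots, n_{r+2}\)) to Proposition~\ref{prop-ss}. The only ingredient you pass over is the cohomology vanishing of Lemma~\ref{er-is-bn}, which the paper uses to justify that one may smooth an arbitrary subset of the nodes while forcing the smoothed curve to meet \(R\) exactly in the prescribed \(g\) points.
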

\begin{proof}
Applying Lemma~\ref{er-is-bn}, we may degenerate \(J \to \PP^r\)
to
\(f \colon J'=J_0 \cup L_1 \cup \cdots \cup L_{d-r-1} \to \PP^r\).
Applying Lemma~\ref{lm:general}, we thus reduce Proposition~\ref{prop-e}
to Proposition~\ref{prop-ss}.
\end{proof}

\begin{proof}[Proof of Theorem~\ref{thm:gen}.]
By Lemma~\ref{lm:d-r} (applied when $\epsilon=0$),
it suffices to consider the cases \(r + 1 \leq d \leq 2r - 1\).
This follows from Proposition~\ref{prop-e}.
\end{proof}

\section{The proof of the Minimal Resolution Property}

In this section, we prove Theorem~\ref{thm:main}. Via (\ref{eq:raynaud}) we have establish that this amounts to the statement
that if $C\subseteq \PP^r$ is a general Brill--Noether curve of genus $g$ and degree $d$ then the kernel bundle $M_V$ (or equivalently, the restricted tangent bundle $T_{\PP^r|C}=M_V^{\vee}\otimes L$) satisfies the strong Raynaud condition.
Our argument will use that certain vector bundles on elliptic curves
were shown to be general (and thus semistable) in the previous section.
This implies the strong Raynaud condition (in arbitrary characteristic):

\begin{lm} \label{ss-sr} ($\mathrm{char}(k)\geq 0$) A semistable vector bundle on an elliptic curve is strongly semistable and satisfies
the strong Raynaud condition.
\end{lm}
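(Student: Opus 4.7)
The plan is to verify the two assertions in sequence: first that $E$ is strongly semistable, and then, using this, that the strong Raynaud condition for $E$ follows by a cohomological case analysis applied to each wedge power.

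In characteristic zero, strong semistability coincides with semistability, so there is nothing to prove. Assume $\mathrm{char}(k) = p > 0$. Here I would appeal to the Atiyah--Oda classification of vector bundles on $J$, which remains valid in positive characteristic: every semistable bundle decomposes as a direct sum of indecomposable bundles of a common slope, and the absolute Frobenius $F \colon J \to J$---itself a (purely inseparable) isogeny of degree $p$---acts on this classification in an explicit way. The resulting key consequence, standard in this setting, is that $F^*E$ is again a direct sum of indecomposable bundles of a common slope $p \cdot \mu(E)$, hence semistable; iterating the argument yields strong semistability of $E$.

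For the strong Raynaud condition, wedge powers of strongly semistable bundles on a smooth projective curve are again strongly semistable (in any characteristic, by a Ramanan--Ramanathan-type theorem), so each $\bigwedge^i E$ is semistable. It therefore suffices to check that every semistable bundle $W$ on $J$ satisfies the weak Raynaud condition. Fix $d \in \ZZ$ and set $r = \rk(W)$. If $rd + \deg(W) > 0$, then for every $\eta \in \Pic^d(J)$ the twist $W \otimes \eta$ is semistable of positive slope, and using $\omega_J = \OO_J$ together with Serre duality,
\[H^1(J, W \otimes \eta) \cong H^0(J, W^\vee \otimes \eta^{-1})^\vee = 0,\]
since $W^\vee \otimes \eta^{-1}$ is semistable of negative slope. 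The symmetric case $rd + \deg(W) < 0$ similarly yields $H^0(W \otimes \eta) = 0$ for every $\eta$. In the remaining case $rd + \deg(W) = 0$, which forces $\mu(W) \in \ZZ$, the Jordan--H\"older components of $W \otimes \eta$ are stable bundles of slope zero on $J$; by the rank/degree coprimality requirement for stability on an elliptic curve, they are necessarily degree-zero line bundles. Writing $N_1, \ldots, N_r$ for the fixed Jordan--H\"older components of $W$, those of $W \otimes \eta$ are $N_1 \otimes \eta, \ldots, N_r \otimes \eta$, and the set of $\eta \in \Pic^d(J)$ for which some $N_i \otimes \eta \cong \OO_J$ is finite. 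For any $\eta$ outside this finite set, a short induction along the Jordan--H\"older filtration using $H^0(N_i \otimes \eta) = 0$ for every $i$ yields $H^0(J, W \otimes \eta) = 0$, as required.

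The principal obstacle, as anticipated, is the strong-semistability step in positive characteristic; this is the one place where the elliptic-curve hypothesis is essential, and the Atiyah--Oda classification does the real work there. Once that is in hand, closure of strong semistability under wedge powers plus the triviality of $\omega_J$ reduce the Raynaud verification to the routine case analysis above.
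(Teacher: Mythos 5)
Your proposal is correct and follows essentially the same route as the paper: strong semistability via the Atiyah--Oda description of Frobenius pullbacks of indecomposable bundles, semistability of wedge powers via a Ramanan--Ramanathan/Moriwaki-type tensor product theorem, and then the weak Raynaud condition for semistable bundles on an elliptic curve. The only difference is that you spell out the slope/Jordan--H\"older case analysis for the last step, which the paper states without proof; that verification is accurate.
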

\begin{proof}
Since indecomposable vector bundles on an elliptic curve are semistable, via \cite[Theorem 2.16]{oda}
it follows that semistable vector bundles on an elliptic curve are strongly semistable.
Therefore, by  \cite[Corollary 7.3]{moriwaki}, the tensor product of semistable
vector bundles on an elliptic curve is semistable. Consequently, a wedge power of a semistable vector bundle on an elliptic curve is semistable.
Since semistable vector bundles on elliptic curves satisfy the weak Raynaud condition,
they therefore satisfy the strong Raynaud condition.
\end{proof}

\subsection{\boldmath The proof of Theorem \ref{thm:main}, respectively Theorem \ref{thm:semistab-strong}}\label{sec:thmmain}.

We fix $g, r$ and $d$ such that \(g \geq 1\), $d\geq 2r$, and $\rho(g,r,d)\geq 0$. We have to construct a BN curve $C\subseteq \PP^r$ of degree $d$ and genus $g$ for which $T_{\PP^r|C}$ satisfies the strong Raynaud condition, respectively is strongly semistable.
Using the identity $\rho(g,r,d)=\rho(g-r-1,r,d-r)$, by applying Lemma~\ref{lm:d-r}, we immediately reduce to the cases
\(2r + 1 \leq d \leq 3r - 1\)
(if \(d = 2r\) we reduce to the strong Raynaud condition, respectively strong semistability, for a rational normal curve).

Since \(d \leq 3r - 1\), we have
\[r(d - r + 1 - g) = \rho(g, r, d) + (3r - 1 - d) + 1 - r \geq 1 - r, \]
which implies $g \leq d-r+1$. The key input in these cases is Proposition~\ref{prop-e},
which implies that there exists a smooth elliptic curve \(J \subseteq  \PP^r\)
of degree \(d - r\),
meeting a rational normal curve \(R\) at \(g\leq d - r + 1\) points
\(p_1, p_2, \ldots, p_{g}\),
for which \(T_{\PP^r|J}\) is semistable
(and thus satisfies the strong Raynaud condition, respectively is strongly semistable, by Lemma~\ref{ss-sr}).
This implies that $H^1\bigl(J, T_{\PP^r|J}(-p_1 - \cdots - p_{g})\bigr) = 0$.

\vskip 3pt

Applying Lemma~\ref{lm:ray-open}, it suffices to show that the resulting curve
\[f \colon J \cup_{\{p_1, p_2, \ldots, p_g\}} R \hookrightarrow \PP^r\]
satisfies \(H^1(J\cup R, f^* T_{\PP^r}) = 0\) and is thus a BN curve.
For this, we use the exact sequence
\[0 \longrightarrow T_{\PP^r|J}(-p_1 - \cdots - p_g) \longrightarrow f^* T_{\PP^r} \longrightarrow  T_{\PP^r|R} \longrightarrow 0.\]
Since \(H^1\bigl(J, T_{\PP^r|J}(-p_1 - \cdots - p_{g})\bigr) = 0\), as well as \(H^1(R, T_{\PP^r|R})\cong H^1\bigl(\PP^1, \OO_{\PP^1}(r+1)\bigr)^{r}= 0\),
we conclude that \(H^1\bigl(J\cup R, f^* T_{\PP^r}\bigr) = 0\), as desired.
\hfill $\Box$




\section{Butler's Conjecture}

In this section we prove Theorem~\ref{thm:stab} (Butler's conjecture) asserting that the kernel bundle $M_V$ (or equivalently, the restricted tangent bundle $T_{\PP^r|C}$) of a BN curve $C\subseteq \PP^r$ of genus $g\geq 3$ is stable, respectively semistable when $g=2$. We proceed by induction on \(g\).
Our argument will assume semistability for genus \(g - 1\) in order to derive \emph{stability} for genus \(g\);
we may therefore use \(g = 1\) as a base case, since semistability has already been
established in that case (cf.\ Theorem~\ref{thm:gen}). We assume \(g \geq 2\).

Applying Lemma~\ref{lm:d-r}, we reduce to the cases \(d \leq 2r\),
plus the case \((d, g) = (3r, 2)\).
The cases where \(d = 2r + 1\) reduce to \((d, g) = (r + 1, 1)\),
when \(T_{\PP^r|C}\) is stable by Theorem~\ref{thm:gen}.
The case \((d, g) = (3r, 2)\) would reduce to \((d, g) = (2r, 2)\) via Lemma~\ref{lm:d-r},
but Theorem~\ref{thm:stab} asserts that \(T_{\PP^r|C}\) is strictly semistable if \((d, g) = (2r, 2)\), and only in this case.

\subsection{\boldmath Corank $1$ subbundles of the kernel bundle for \(d \leq 2r\)}

We fix a BN curve $C\stackrel{|V|}\hookrightarrow \PP^r$ of genus $g$ and degree $d$ and set $L:=\OO_C(1)$. First we consider  corank $1$ subbundles \(F \subseteq T_{\PP^r|C}\).
We establish:

\begin{prop} \label{prop:c1q} Suppose that \(d \leq 2r\) and \(g \geq 2\) and \(r \geq 2\).
Then there exists a subbundle \(F \subseteq T_{\PP^r|C}\)
of corank \(1\) with \(\mu(F) \geq \mu(T_{\PP^r|C})\)
if and only if \(g = 2\) and \(d = 2r\).
Moreover, if \(g = 2\) and \(d = 2r\), then for any such subbundle, \(f^* T_{\PP^r} / F \cong \omega_C\otimes L\).
\end{prop}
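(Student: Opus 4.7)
The plan is to dualize and reinterpret a corank-one subbundle of $T_{\PP^r|C}$ geometrically, reducing everything to the analysis of low-degree morphisms $C \to (\PP^r)^\vee$. Writing $T_{\PP^r|C} = M_V^\vee \otimes L$, any short exact sequence $0 \to F \to T_{\PP^r|C} \to Q \to 0$ with $Q$ a line bundle dualizes, after tensoring by $L$, into $0 \to N \to M_V \to F^\vee \otimes L \to 0$, where $N := Q^\vee \otimes L$ is a saturated line subbundle of $M_V$. The inequality $\mu(F) \geq \mu(T_{\PP^r|C}) = d(r+1)/r$ translates directly into $\deg N \geq -d/r$.

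Via the defining inclusion $M_V \hookrightarrow V \otimes \OO_C$, every saturated line subbundle $N \subseteq M_V$ is of the form $\psi^* \OO(-1)$ for a morphism $\psi \colon C \to \PP(V) = (\PP^r)^\vee$ of degree $e := -\deg N$, characterized by the incidence condition that the hyperplane $\psi(p) \subseteq \PP^r$ contains $f(p)$ for every $p \in C$. The hypothesis $d \leq 2r$ forces $e \leq d/r \leq 2$. I would rule out each small value of $e$ in turn: $e = 0$ would place $f(C)$ in a single hyperplane, contradicting nondegeneracy; $e = 1$ would produce a birational map $C \to \PP^1$, forcing $g = 0$; and $e = 2$ with image a plane conic would again force $C$ rational. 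This leaves only $e = 2$ with image a line, which exhibits $C$ as hyperelliptic via the induced degree-two morphism $C \to \PP^1$ and forces $d/r \geq 2$, hence $d = 2r$. Since a general BN curve of genus $g \geq 3$ is non-hyperelliptic, no such $N$ exists for $g \geq 3$, establishing the ``only if'' direction.

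For the remaining case $g = 2$ and $d = 2r$, where $\omega_C$ is the hyperelliptic bundle, I would verify existence of $F$ and identify its quotient by a Riemann--Roch computation. Since $L \otimes \omega_C$ is nonspecial of degree $2r + 2$, the long exact sequence attached to $0 \to M_V \otimes \omega_C \to V \otimes \omega_C \to L \otimes \omega_C \to 0$ yields
\[h^0(C, M_V \otimes \omega_C) \geq (r+1) h^0(\omega_C) - h^0(L \otimes \omega_C) = 2(r+1) - (2r+1) = 1,\]
producing a nonzero map $\omega_C^{-1} \to M_V$. By the preceding degree analysis, its saturation has degree exactly $-2 = \deg \omega_C^{-1}$, so the map is already a subbundle and $N \cong \omega_C^{-1}$. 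The corresponding quotient is then $Q = N^\vee \otimes L = \omega_C \otimes L$, simultaneously yielding existence of $F$ and the asserted isomorphism $f^* T_{\PP^r}/F \cong \omega_C \otimes L$.

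The main obstacle, in my view, is the degree-two image analysis, particularly the rigidity needed to pin down $N$ as exactly the pullback of $\OO(-1)$ under the hyperelliptic map rather than some twist by an effective divisor. This rigidity hinges on the observation that the degree of $N$ is already at the maximum allowed $-d/r$, leaving no room for its saturation to strictly increase the degree. A subtler point is excluding hyperelliptic BN curves in genus $g \geq 3$, which is resolved by invoking the generality of $C$ in $\cM_g$.
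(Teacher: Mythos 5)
Your proof is correct and is essentially the paper's argument in dual form: where you classify saturated line subbundles $N \subseteq M_V$ of degree $\geq -d/r$ via the induced morphism $C \to (\PP^r)^\vee$ of degree $e \leq 2$, the paper classifies the dual quotient line bundle $B \otimes L^{\vee}$ of the globally generated bundle $M_V^{\vee}$ as a globally generated line bundle of degree at most $d/r \leq 2$, and the two case analyses (forcing $g=0$ or hyperellipticity, hence $\omega_C$ in genus $2$) coincide. Your existence computation $h^0(C, M_V \otimes \omega_C) \geq 2(r+1)-(2r+1) = 1$ is the Serre dual of the paper's count $h^0(C, M_V^{\vee}) \geq r+1 > r = \chi(C, M_V^{\vee})$, so the approaches match throughout.
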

\begin{proof}
Equivalently, we must show that there exists a quotient line bundle
\(T_{\PP^r|C} \twoheadrightarrow B\) with \(\mu(B) \leq \mu(f^* T_{\PP^r})\)
if and only if \(g = 2\) and \(d = 2r\), and that in this case, \(B \cong \omega_C\otimes L\).

The key point is that \(M_V^{\vee}\cong T_{\PP^r|C}(-1)\) is globally generated.
Therefore, if such a quotient line bundle \(B\) existed, then \(B\otimes L^{\vee}\) is a globally
generated \emph{line} bundle of degree at most \(\mu\bigl(M_V^{\vee}\bigr) =\frac{d}{r} \leq 2\).
No such line bundle exists on a general curve \(C\) of genus \(g \geq 3\),
and the only such line bundle on a curve of genus \(g = 2\) is  \(\omega_C\).
It remains only to observe that \(\Hom(f^* T_{\PP^r}, \omega_C\otimes L) \neq 0\), or equivalently
by Serre duality, that
\(H^1(C, M_V^{\vee}) \neq 0\). To see this, we observe
$h^0(C, M_{V}^{\vee}) \geq \mathrm{dim}(V) = r + 1 > r = \chi(C, M_V^{\vee})$.
\end{proof}

\subsection{\boldmath Main inductive argument: \(d \leq 2r\) and \((d, g) \neq (2r, r + 1)\)}

Here we give our main inductive argument, which applies when
\(d \leq 2r\) but \((d, g) \neq (2r, r + 1)\). Note that in this case $\rho(g-1,r,d-1)\geq 0$. We may start with a BN curve $C\subseteq \PP^r$ of degree \(d - 1\) and genus \(g - 1\) such that $T_{\PP^r|C}$ is semistable, and let
\(L_1\) be a \(2\)-secant line meeting \(C\) at general points \(p\) and \(q\).  We denote by
\(f \colon C \cup_{\{p, q\}} L_1 \to \PP^r\) the map inducing the corresponding embedding. Set $L:=\OO_C(1)\in W^{r}_{d-1}(C)$ and
$V:=f_{|C}^* H^0(\PP^r, \OO_{\PP^r}(1))\subseteq H^0(C,L)$.

\vskip 3pt

Write \(\nu \colon C \sqcup L_1 \to C \cup L_1\) for the normalization map,
and let \(F \subset \nu^* f^* T_{\PP^r} \cong T_{\PP^r|C} \oplus T_{\PP^r|L_1}\)
be any proper subbundle of  uniform rank $s$, with $1\leq s\leq r-2$.
Note that the case $s=r-1$ is covered by Proposition~\ref{prop:c1q}.
Our goal is to show that \(\mu^\adj(F) < \mu(f^* T_{\PP^r})\).

Write \(p_1\) and \(q_1\), respectively \(p_2\) and \(q_2\), for the points on \(L_1\), respectively \(C\), lying above \(p\) and \(q\).
Since \(T_{\PP^r|L_1} \cong \O_{L_1}(2) \oplus \O_{L_1}(1)^{r - 1}\), we have by inspection
\begin{equation} \label{muL-upper}
\mu(F_{|L_1}) \leq 1 + \frac{1}{s},
\end{equation}
therefore,
\begin{equation} \label{L-contrib}
\mu(F_{|L_1}) - \frac{1}{s} \cdot \Big[\codim_F \left(F_{|p_1} \cap F_{|p_2}\right) + \codim_F \left(F_{|q_1} \cap F_{|q_2} \right)\Big] \leq 1 + \frac{1}{s}.
\end{equation}
We now split into cases as follows.

\subsubsection{If inequality \eqref{L-contrib} is strict}
Since \(\mu\bigl(F_{|L_1}\bigr) \in \frac{1}{s} \cdot \ZZ\), we obtain
\[\mu(F_{|L_1}) - \frac{1}{s} \cdot \Big[\codim_F \left(F_{|p_1} \cap F_{|p_2}\right) + \codim_F \left(F_{|q_1} \cap F_{|q_2} \right)\Big] \leq 1 \implies \mu^\adj(F) \leq \mu(F_{|C}) + 1.\]
By induction, \(T_{\PP^r|C}\) is semistable.
Thus, as desired
\[\mu^\adj(F) \leq \mu(F_{|C}) + 1 \leq \mu(T_{\PP^r|C}) + 1 = \frac{(r + 1)(d - 1)}{r} + 1 < \frac{(r + 1)d}{r} = \mu(f^* T_{\PP^r}).\]

\subsubsection{If \eqref{L-contrib} is an equality}
We first rephrase both this condition, and our desired conclusion,
in terms of \(C\) alone. The desired conclusion  \(\mu^\adj(F) < \mu(f^* T_{\PP^r})\) is equivalent to
\begin{equation} \label{goal-L-eq}
\mu(F_{|C}) = \mu^\adj(F) - 1 - \frac{1}{s} < \mu(f^* T_{\PP^r}) - 1 - \frac{1}{s} = \frac{(r + 1)d}{r} - 1 - \frac{1}{s}.
\end{equation}

To rephrase the condition that \eqref{L-contrib} is an equality,
we first note that since $F_{|C}\subseteq T_{\PP^r|C}$, for any point \(x \in C\), there is a unique \(s\)-plane \(\Lambda_x \ni f(x)\)
with \(F_{| x} = T_{f(x)} \Lambda_x\).
Similarly, for any point \(y \in L_1\), we may associate an \(s\)-plane \(\Lambda_y^{L_1}\).
Since $F_{|L_1}\cong \OO_{L_1}(2)\oplus \OO_{L_1}^{s-1}$,  the $s$-plane   \(\Lambda_y^{L_1}\) is constant as \(y\in L_1\) varies.
The condition that \eqref{L-contrib} is an equality, translates into  \(F_{|p_1} = F_{|p_2}\), respectively \(F_{|q_1} = F_{|q_2}\).
We conclude that \(\Lambda_p = \Lambda_q\). Note that this can be rephrased in terms of a canonical identification of the fibres over the points $p$ and $q$
$$\Bigl(\frac{T_{\PP^r|C}}{F_{|C}}\Bigr)^{\vee}\otimes L_{|p}=\Bigl(\frac{T_{\PP^r|C}}{F_{|C}}\Bigr)^{\vee}\otimes  L_{|q} \subseteq V.$$

\vskip 3pt

\emph{Claim:}  The assumption \(\Lambda_p = \Lambda_q\) implies the inequality \eqref{goal-L-eq}.

\medskip

Let \(u\) be the maximal integer such that \(F_{|C}\) contains the subbundle \(T_{f_{|C} \to u\cdot p}\) of $T_{\PP^r|C}$. Similarly,
let \(v\) be the maximal integer such that \(F_{|C}\) contains \(T_{f|_C \to v\cdot q}\). In the language of kernel bundles, $u$ and $v$ are chosen maximally such that one has the inclusions $\Bigl(\frac{T_{\PP^r|C}}{F_{|C}}\Bigr)^{\vee}\otimes L\subseteq M_{V(-u\cdot p)}$ respectively
$\Bigl(\frac{T_{\PP^r|C}}{F_{|C}}\Bigr)^{\vee}\otimes L\subseteq M_{V(-v\cdot q)}$, where both $M_{V(-u\cdot p)}$ and $M_{V(-v\cdot q)}$ are regarded as subbundles of $M_V$.

\vskip 3pt

Write \(t = u + v\).
Note that by the definition of $u$ and $v$ we have
$$\Bigl(\frac{T_{\PP^r|C}}{F_{|C}}\Bigr)^{\vee}\otimes L_{|p}=\Bigl(\frac{T_{\PP^r|C}}{F_{|C}}\Bigr)^{\vee}\otimes L{_|q} \subseteq V\bigl(-(u+1)\cdot p\bigr)\cap V\bigl(-(v+1)\cdot q\bigr)=V\Bigl(-(u+1)\cdot p-(v+1)\cdot q\Bigr),$$

in particular, we obtain \(t = u + v \leq s - 1 \leq r - 3\).

\vskip 4pt

Let \(\bar{f} \colon C \to \PP^{r - t - 2}\) be the composition of \(f\) with the projection
from $\bigl\langle (u + 1)\cdot p + (v + 1)\cdot q\bigr\rangle$, and let \(K\) and \(I\) be the
kernel and the image of the quotient morphism
\begin{equation} \label{quot}
F_{|C} \longrightarrow  \bar{f}^* T_{\PP^{r - t - 2}}\bigl((u + 1)\cdot p + (v + 1)\cdot q\bigr).
\end{equation}
Let \(\bar{I}\) be the saturation of \(I\). By induction, we may assume
\(\bar{f}^* T_{\PP^{r - t - 2}}\) to be semistable, so
\begin{equation} \label{muI}
\mu(\bar{I}) \leq \mu(\bar{f}^* T_{\PP^{r - t - 2}}) + t + 2 = \frac{rd - dt - r - d  - 1}{r - t - 2}.
\end{equation}

By construction, \(K\) injects into \(T_{f|_C \to (u + 1)\cdot p} \oplus T_{f|_C \to (v + 1)\cdot q}\) and
contains \(T_{f|_C \to u\cdot p} \oplus T_{f|_C \to v\cdot q}\).
Write
\(K' \subseteq T_{f|_C \to (u + 1)\cdot p}/T_{f|_C \to u\cdot p} \oplus T_{f|_C \to (v + 1)\cdot q}/T_{f|_C \to v\cdot q} \cong L(p) \oplus L(q)\)
for the quotient.
By construction, \(K'\) is either zero or a line bundle not containing either of the factors.

\vskip 4pt

\noindent {\bf{(i)}} If \(K'\) is a line bundle, it does not contain either factor and since \(L(p) \not\cong L(q)\), we find \(\mbox{deg}(K')\leq d-1\). Therefore, using \(\mu(I) \leq \mu(\bar{I})\)
in combination with \eqref{muI}, we obtain
\begin{align*}
\mu(F_{|C}) &\leq \frac{t}{s} \cdot \mu\bigl(T_{f|_C \to u\cdot p} \oplus T_{f|_C \to v\cdot q}\bigr) + \frac{1}{s} \cdot \mu(K') + \frac{s - t - 1}{s} \cdot \mu(\bar{I}) \\
&\leq \frac{t}{s} \cdot d + \frac{1}{s} \cdot (d - 1) + \frac{s - t - 1}{s} \cdot \frac{rd - dt - r - d  - 1}{r - t - 2} \\
&= \frac{(r + 1)d}{r} - 1 - \frac{1}{s} - \frac{r(t + 1)(r - 1 - s) + (d - 2r)(rt - ts - 2s + r)}{sr(r - 2 - t)}.
\end{align*}
Comparing to \eqref{goal-L-eq}, all that remains is to verify \(P_1(d, r, s, t) > 0\),
where
\[P_1(d, r, s, t) = r(t + 1)(r - 1 - s) + (d - 2r)(rt - ts - 2s + r).\]
But evidently, \(P_1(2r, r, s, t) > 0\), and \(P_1(r, r, s, t) = r(s - 1 - t) \geq 0\).
Since \(r < d \leq 2r\), and \(P_1\) is linear in \(d\), we conclude that \(P_1(d, r, s, t) > 0\), as desired.

\vskip 4pt

\noindent {\bf{(ii)}} If \(K' = 0\), then the morphism \eqref{quot},
and therefore \(I \to \bar{I}\),
drops rank at both points \(p\) and \(q\).
Therefore, using inequality \eqref{muI},
\begin{align*}
\mu(F_{|C}) &\leq \frac{t}{s} \cdot \mu\bigl(T_{f|_C \to u\cdot p} \oplus T_{f|_C \to v\cdot q}\bigr) + \frac{s - t}{s} \cdot \left(\mu(\bar{I}) - \frac{2}{s - t}\right) \\
& \leq \frac{t}{s} \cdot d + \frac{s - t}{s} \cdot \left(\frac{rd - dt - r - d  - 1}{r - t - 2} - \frac{2}{s - t}\right) \\
&= \frac{(r + 1)d}{r} - 1 - \frac{1}{s} - \frac{r(t + 1)(r - 2 - s) + (d - 2r)(rt - ts - 2s)}{sr(r - 2 - t)}.
\end{align*}
Comparing to \eqref{goal-L-eq}, we are done if \(P_0(d, r, s, t) > 0\), where
\[P_0(d, r, s, t) = r(t + 1)(r - 2 - s) + (d - 2r)(rt - ts - 2s).\]
But evidently, \(P_0(2r, r, s, t) \geq 0\) with equality only if \(s = r - 2\). Moreover,
\[P_0(r, r, s, t) = r(r + s - 2 - 2t) \geq r(r + s - 2 - 2(s - 1)) = r(r - s) > 0.\]
Since \(r < d \leq 2r\), and \(P_0\) is linear in \(d\), we conclude that \(P_0(d, r, s, t) \geq 0\),
with equality only when \(d = 2r\) and \(s = r - 2\).

\vskip 4pt

It therefore remains only to show that it is impossible, when \(d = 2r\) and \(s = r - 2\),
to have equality everywhere in the above argument. Assume to the contrary that this occurs.
Then, $\deg \bigl(\frac{T_{\PP^r|C}}{F_{|C}}\Bigr)= 4r + 2$.
Moreover, the natural map \(L(p) \cong T_{f|_C \to (u + 1)\cdot p} / T_{f|_C \to u\cdot p} \to \frac{T_{\PP^r|C}}{F_{|C}}\) drops rank at \(q\),
so we obtain a map \(L(p + q) \to \frac{T_{\PP^r|C}}{F_{|C}}\). Exchanging the roles
of \(p\) and \(q\), we obtain another such map.
Combining these, we obtain a map
\begin{equation} \label{quot-iso}
L(p + q)^{\oplus 2} \longrightarrow \frac{T_{\PP^r|C}}{F_{|C}}.
\end{equation}
Away from \(\{p, q\}\), the kernel of \eqref{quot-iso} coincides with \(K'\) by definition. Since \(K' = 0\) in case (ii) by assumption, and the kernel of \eqref{quot-iso} is a priori torsion-free, \eqref{quot-iso} is necessarily an injection.
Recalling that $\mbox{deg}(L)=d-1=2r-1$, we observe that both sides of \eqref{quot-iso} are rank \(2\) vector bundles of degree \(4r + 2\), hence  \eqref{quot-iso} is an isomorphism.
In particular,
\[\dim \Hom\bigl(T_{\PP^r|C}, L(p + q)\bigr) \geq 2\Leftrightarrow h^0\bigl(C, M_V(p+q)\bigr)\geq 2.\]
Consequently, for general points \(x_1, x_2, \ldots, x_{g - 2} \in C\), setting $\eta:=\OO_C(p+q+x_1+\cdots+x_{g-2})$ we have
$h^0\bigl(C, M_V\otimes \eta)\geq 2$. Since $\chi\bigl(C, M_V\otimes \eta)<0$ and the line bundle $\eta\in \mbox{Pic}^g(C)$ is general, this contradicts that $M_V$ satisfies the weak Raynaud condition
(cf.\ Proposition~\ref{prop:weak}), providing the desired contradiction.

\subsection{\boldmath The case of canonical curves \((d, g) = (2r, r + 1)\)\label{sec:canonical}}

In this case $C\subseteq \PP^r$ is a general canonical curve and the stability of the restricted tangent bundle $T_{\PP^r|C}$ is well known for \emph{every} non-hyperelliptic smooth curve, see \cite{PR}, \cite{EL}.  However, this case will also be treated in Section \ref{sec:frobenius} with our methods --- and we obtain not just stability, but \emph{strong} stability of $M_{\omega_C}$, something the methods of \cite{PR}, \cite{EL} do not seem to lead to.

\subsection{\boldmath The case \((d, g) = (3r, 2)\)}

Finally, we consider the case \(g = 2\) and \(d = 3r\).
In this case, we prove the semistability of \(T_{\PP^r|C}\) by induction on \(r\).
The base case \(r = 1\) being trivial, we suppose for the inductive step that \(r \geq 2\).

\vskip 3pt

We degenerate to a reducible curve
\(f\colon C \cup_{p_1} L_1 \cup_{p_2} L_2 \to \PP^r\), where $C\subseteq \PP^r$ is a BN curve
of genus $2$ and degree \(3r- 2\)  and \(L_1, L_2\) are \(1\)-secant lines
meeting \(C\) at \(p_1, p_2\) respectively. Write $L:=\OO_C(1)\in W^r_{3r-2}(C)$ and $V:=f_{|C}^* H^0(\PP^r, \OO_{\PP^r}(1))$. Choose furthermore points \(q_i \in L_i \setminus \{p_i\}\).
Applying Lemma~\ref{lm:stab-mod}, it suffices to show the stability of
\(T_{\PP^r|C} [p_1 \to q_1][p_2 \to q_2]\). After further specializing \(q_1\) and \(q_2\)
to a common general point \(q\in \PP^r\), it suffices to establish the stability of
\(T_{\PP^r|C} [p_1 + p_2 \to q]\). We apply Lemma~\ref{rat-base-stab} to this family of vector bundles,
having the base \(B = \PP^r \setminus \{p_1, p_2\}\) parameterizing the position of \(q\).
If \(q \in C\), we have the exact sequence
\begin{equation} \label{seq-for-2}
0 \longrightarrow L(q) \cong T_{f|_C \to q} \longrightarrow T_{\PP^r|C} [p_1 + p_2 \to q] \longrightarrow  M_{V(-q)}^{\vee}\otimes L(-p_1-p_2) \longrightarrow  0.
\end{equation}
As desired, \(\mbox{deg}\bigl(L(q)\bigr)=3r - 1 = \mu\bigl(M_{V(-q)}^{\vee}\otimes L(-p_1-p_2)\bigr)\),
and \(M_{V(-q)}^{\vee}\otimes L(-p_1-p_2)\) is stable
by induction, which completes the proof unless \(r = 2\). In this case, we must also show that
\(L(q)\) \(\ncong M_{V(-q)}\otimes L(-p_1-p_2)\), which is immediate
and that the extension \eqref{seq-for-2} is nonsplit. To that end, it suffices to check that $\Hom\bigl(L^{2}(-q-p_1-p_2), T_{\PP^r|C} [p_1 + p_2 \to q]\bigr) = 0$,
which holds since \(T_{\PP^r|C}\) satisfies the weak Raynaud condition and
\(L^{-2}(q + p_1 + p_2)\in \mbox{Pic}^{-5}(C)\) is  general.
\hfill $\Box$

\section{Strong stability of kernel bundles in positive characteristic.}\label{sec:frobenius}
 We now work over an algebraically closed field $k$ of characteristic $p>0$ and  finally prove Theorem \ref{thm:stab-strong}.
Fix $g, r$ and $d$ satisfying one of the three cases of Theorem~\ref{thm:stab-strong}.
We seek to construct a general Brill--Noether curve $C\subseteq \PP^r$ of degree $d$ and genus $g$ for which $T_{\PP^r|C}$ is strongly stable.

\vskip 4pt

\noindent
\emph{Case~\eqref{ss1}:} Applying Lemma~\ref{lm:d-r},  we reduce to the case \(d = 2r, g=r+1\) of canonical curves.

\vskip 4pt

We degenerate to a map from a reducible curve \(f \colon R' \cup_\Gamma R'' \to \PP^r\),
where $R'$ and $R''$ are smooth rational curves meeting a set
\(\Gamma = \{p_1, p_2, \ldots, p_{r + 2}\}\) of \(r + 2\) general points.
We consider an iterate $F^e$ of the Frobenius morphism, and write \(\nu \colon R' \sqcup R'' \to R' \cup R''\) for the normalization map.
Consider any subbundle of uniform rank \(\F \subseteq (F^e)^* \nu^*T_{\PP^r|R'\cup R''} \cong (F^e)^*\bigl(T_{\PP^r|R'}\bigr) \oplus
(F^e)^*\bigl(T_{\PP^r|R''}\bigr)\).
Our goal is to show that \(\mu^\adj(\F) < \mu\bigl((F^e)^*T_{\PP^r|R'\cup R''}\bigr)\).

Since \(T_{\PP^r|R'}\) is perfectly balanced,
the corresponding projective bundle
\(\PP T_{\PP^r|R'}\) is trivial.
Note that \(\mu(\F_{|R'}) \leq \mu\bigl((F^e)^* T_{\PP^r|R'}\bigr)\),
with equality if and only if \(\F_{|R'}\) is perfectly balanced. A similar statement holds for  \(\F_{R''}\).
Thus \(\mu(\F) \leq \mu\bigl((F^e)^*T_{\PP^r|R'\cup R''}\bigr)\),
with equality only when both \(\F_{|R'}\) and \(\F_{|R''}\) are perfectly balanced.
Write \(p_i'\), respectively \(p_i''\), for the points lying above \(p_i\)
on \(R'\), respectively on \(R''\).
Then \(\mu^\adj(\F) \leq \mu(\F)\), with equality only when
\(\F_{|p_i'} = \F_{|p_i''}\) for \(i=1, \ldots, r+2\).
Putting these together, we have \(\mu^\adj(\F) \leq \mu\bigl((F^e)^* T_{\PP^r|R'\cup R''}\bigr)\),
with equality only if \(\F_{|R'}\) and \(\F_{|R''}\) are both perfectly balanced
and \(\F_{|p_i'} = \F_{|p_i''}\) for every \(i\).

\vskip 4pt

To express this in a more convenient way, the triviality of the bundles \(\PP T_{\PP^r|R'}\)
and \(\PP T_{\PP^r|R''}\) give rise to two sets
of canonical isomorphisms:
\[\phi_{ij}' \colon \PP T_{p_i}(\PP^r) \cong \PP T_{p_j}(\PP^r) \andlarge \phi_{ij}'' \colon \PP T_{p_i}(\PP^r) \cong \PP T_{p_j}(\PP^r).\]
If such a subbundle \(\F\) exists, then \(\Lambda_i = \PP \F_{|p_i'} = \PP \F_{|p_i''}\)
would define a collection of proper subspaces
\(\Lambda_i \subseteq \PP T_{p_i}(\PP^r)\)
carried into each other by both the \(\phi_{ij}'\) and the \(\phi_{ij}''\).
Our task is to show that no such collection of proper subspaces \(\Lambda_i\) exist, as long the rational normal curves $R', R''$ and the points $p_1, \ldots, p_{r+2}$ are chosen generically.

\vskip 3pt

For a point \(x\in R\),
we have \(T_{R \to x} \cong \O_{\PP^1}(r + 1)\);
thus \(\phi_{ij}'\) carries \(\PP T_{R' \to x|p_i}\) to \(\PP T_{R' \to x|p_j}\).
Therefore, denoting by $\overrightarrow{p_i, p_j}\in \PP T_{p_i}(\PP^r)$ the tangent direction determined by $\langle p_i, p_j\rangle$, we obtain
\begin{equation} \label{tran}
\phi_{12}'(\overrightarrow{p_1, p_j}) = \begin{cases}
\overrightarrow{p_2, p_j} & \text{if \(j = 3, \ldots, r + 2\),} \\
\PP T_{p_2} R' & \text{if \(j = 2\).}
\end{cases}
\end{equation}

In particular, we see that \(\overrightarrow{p_1, p_j}\) are  eigenspaces for
\(\phi_{21}' \circ \phi_{12}''\) for $j=3, \ldots, r+2$, hence \(\phi_{21}' \circ \phi_{12}''\) is diagonalizable.
Note that the space of reducible curves \(f \colon R' \cup R'' \to \PP^r\) equipped with an \emph{ordering} of the marked points  \(p_i\)
is irreducible, so by monodromy considerations, if \emph{some} two eigenvalues of \(\phi_{21}' \circ \phi_{12}''\)
are equal, then \emph{any} two eigenvalues are equal.
Therefore, \(\phi_{21}' \circ \phi_{12}''\)
must either be be the identity or have distinct eigenvalues.
However, the former case is ruled out by applying \eqref{tran} for \(j = 2\),
since \(T_{p_2} R' \neq T_{p_2} R''\).
Thus \(\phi_{21}' \circ \phi_{12}''\) is diagonalizable with distinct eigenvalues.

In particular, for any collection of proper subspaces \(\Lambda_i \subseteq \PP T_{p_i}(\PP^r)\)
carried into each other by both the \(\phi_{ij}'\) and the \(\phi_{ij}''\),
the subspace \(\Lambda_1\) must be a span of eigenvectors of \(\phi_{21}' \circ \phi_{12}''\), that is,

$$\Lambda_1 = \PP \Bigl(T_{p_1} \langle p_1, p_{i_1}, \ldots, p_{i_s} \rangle\Bigr) \subseteq \PP T_{p_1}(\PP^r), \  \mbox{ for } i_1, \ldots, i_s \neq 2$$
Since $\mbox{rk}(\F)<r$,
such a representation is unique.
Since the ordering of the points was arbitrary,
we must therefore have \(i_1, \ldots, i_s \neq n\) for every \(n = 2, \ldots, r + 2\),
which is impossible. Therefore no such collection of subspaces \(\Lambda_i\) exists,
as desired.

\vskip 4pt

\noindent
\emph{Case~\eqref{ss2}:} We begin by applying Lemma~\ref{lm:d-r} to reduce to the cases
\(2r + 1 \leq d \leq 3r\). If \(d \leq 3r - 1\), then as in Section~\ref{sec:thmmain}, we have \(g \leq d - r + 1\).
But if \(d = 3r\), then we may further assume \(g \leq r \leq d - r + 1\),
since otherwise we fall into case~\eqref{ss1}.
Therefore we may suppose \(g \leq d - r + 1\) regardless.

\vskip 3pt

By Proposition~\ref{prop-e},
there is a smooth elliptic curve \(J \subseteq  \PP^r\)
of degree \(d - r - 1\),
meeting a rational normal curve \(R\) at \(g - 1 \leq d - r\) points
\(p_1, p_2, \ldots, p_{g - 1}\),
for which \(T_{\PP^r|J}\) is semistable and thus strongly semistable.
This implies that $H^1\bigl(J, T_{\PP^r|J}(-p_1 - \cdots - p_{g - 1})\bigr) = 0$.

Let \(L_1\) be a \(2\)-secant line to \(R\).
Then \(R \cup L_1\) can be smoothed to a general elliptic curve \(J'\) of degree \(r + 1\).
Since $H^1\bigl(J, T_{\PP^r|J}(-p_1 - \cdots - p_{g - 1})\bigr) = 0$,
we may lift this deformation to a deformation of \(J\) that
continues to meet \(J'\) at \(g - 1\) points.
Applying Proposition~\ref{prop-e} again, \(T_{\PP^r|J'}\) is a general vector bundle on \(J'\),
and therefore strongly semistable. Since both its degree and the characteristic are prime to its
rank, \(T_{\PP^r|J'}\) must be strongly stable.
Therefore, applying Lemma~\ref{lm:components-to-special-strong},
it suffices to show that the resulting stable curve of genus $g$ and degree $d$
\[f \colon J \cup_{\{p_1, p_2, \ldots, p_{g-1}\}} J' \hookrightarrow \PP^r\]
satisfies \(H^1(J\cup J', f^* T_{\PP^r}) = 0\) and is thus a BN curve.
For this, we use the exact sequence
\[0 \longrightarrow T_{\PP^r|J}(-p_1 - \cdots - p_{g-1}) \longrightarrow f^* T_{\PP^r} \longrightarrow  T_{\PP^r|J'} \longrightarrow 0.\]
Since \(H^1\bigl(J, T_{\PP^r|J}(-p_1 - \cdots - p_{g - 1})\bigr) = H^1(J', T_{\PP^r|J'}) = 0\),
we have \(H^1\bigl(J\cup J', f^* T_{\PP^r}\bigr) = 0\), as desired.

\vskip 4pt

\noindent
\emph{Case~\eqref{ss3}:} We begin by applying Lemma~\ref{lm:d-r} to reduce to the cases
\(3r \leq d \leq 4r - 1\). As in Section~\ref{sec:thmmain}, we have \(g \leq d - r + 2\).

By Proposition~\ref{prop-e},
there is a smooth elliptic curve \(J \subseteq  \PP^r\)
of degree \(d - 2r\),
meeting a rational normal curve \(R\) at \(g - r - 1 \leq d - 2r + 1\) points
\(p_1, p_2, \ldots, p_{g - r - 1}\),
for which \(T_{\PP^r|J}\) is semistable and thus strongly semistable.
This implies that $H^1\bigl(J, T_{\PP^r|J}(-p_1 - \cdots - p_{g - r - 1})\bigr) = 0$.

Let \(R'\) be another rational normal curve, meeting \(R\) in exactly \(r + 2\) points.
As in the previous case,
we may smooth \(R \cup R'\) to a general canonical curve \(C'\),
while deforming \(J\) so it continues to meet \(J'\) at \(g - r - 1\) points.
From Section~\ref{sec:canonical},
we know \(T_{\PP^r|C'}\) is strongly stable.
Therefore, applying Lemma~\ref{lm:components-to-special-strong},
we can complete the proof by arguing, as in the previous case, that the resulting curve
\[f \colon J \cup_{\{p_1, p_2, \ldots, p_{g-r-1}\}} C' \hookrightarrow \PP^r\]
satisfies \(H^1(J\cup C', f^* T_{\PP^r}) = 0\) and is thus a BN curve.
\hfill $\Box$

\begin{rem}\label{rmk:fermat}
For those values of $g$ and $d$ appearing in the statement of Theorem \ref{thm:semistab-strong}, its conclusions are optimal even in the case of canonical curves. For instance, in genus $3$, that is, for smooth quartics $C\subseteq \PP^2$, the kernel bundle $M_{\omega_C}$ is always semistable  \cite[Corollary 3.5]{Tr}. However, for the Fermat curve $C:(x^4+y^4+z^4=0)$ putting together results of Han--Monsky \cite{HM} and \cite{Tr2},  for characteristic $p\geq 17$, when $p\equiv \pm 1 \mbox{ mod } 8$, then $M_{\omega_C}$ is strongly semistable (thus $e_{\mathrm{HK}}(C)=3$), whereas if $p\equiv \pm 3 \mbox{ mod } 8$, then $F^*(M_{\omega_C})$ is not semistable, in which case $e_{\mathrm{HK}}(C)=3+\frac{1}{p^2}$. Similar results exist for the Klein quartic $C:(x^3y+y^3z+z^3x=0)$, see \cite{Tr2}. For a characteristic $p\geq 17$, when $p\equiv \pm 1 \mbox{ mod } 7$, then $M_{\omega_C}$ is strongly semistable, whereas for $p \equiv \pm 2 \mbox{ mod } 7$, the second Frobenius pullback $(F^2)^*\bigl(M_{\omega_C}\bigr)$ is not semistable. We expect such phenomena to propagate throughout when studying individual canonical curves of higher genus.
\end{rem}

\bibliographystyle{plain}

\end{document}